\newtheorem{lemma}{Lemma}
\patchcmd{\@maketitle}{\LARGE \@title}{\fontsize{15}{5}\selectfont\@title}{}{}
\newtheorem{Theorem}{Theorem}[section]
\newtheorem{Proposition}{Proposition}[section]
\newtheorem{Lemma}{Lemma}[section]
\newtheorem{Corollary}{Corollary}[section]
\newtheorem{Remark}{Remark}[section]
\newtheorem{Definition}{Definition}[section]
\newcommand{\bTheorem}[1]{
	\begin{Theorem} \label{T#1} }
	\newcommand{\eT}{\end{Theorem}}
\newcommand{\bProposition}[1]{
	\begin{Proposition} \label{P#1}}
	\newcommand{\eP}{\end{Proposition}}
\newcommand{\bLemma}[1]{
	\begin{Lemma} \label{L#1} }
	\newcommand{\eL}{\end{Lemma}}
\newcommand{\bCorollary}[1]{
	\begin{Corollary} \label{C#1} }
	\newcommand{\eC}{\end{Corollary}}
\newcommand{\bRemark}[1]{
	\begin{Remark} \label{R#1} }
	\newcommand{\eR}{\end{Remark}}
\newcommand{\bDefinition}[1]{
	\begin{Definition} \label{D#1} }
	\newcommand{\eD}{\end{Definition}}
\newcommand{\bFormula}[1]{ \begin{equation} \label{#1} }
	\newcommand{\eF}{ \end{equation} }
\newcommand{\vu}{\vc{u}}
\newcommand{\vc}[1]{{\bf #1}}
\newcommand{\vX}{\mathrm{X}}
\newcommand{\vU}{\vc{U}}
\newcommand{\vv}{\vc{v}}
\newcommand{\vw}{\mathbf{w}}
\newcommand{\vM}{\mathrm{M}}
\newcommand{\vB}{\mathrm{B}}
\newcommand{\vN}{\mathrm{N}}
\newcommand{\vD}{\mathrm{D}}
\newcommand{\vsigma}{\mathbf{\sigma}}
\newcommand{\vA}{\mathrm{A}}
\definecolor{Cgrey}{rgb}{0.85,0.85,0.85}
\definecolor{Cblue}{rgb}{0.50,0.85,0.85}
\definecolor{Cred}{rgb}{1,0,0}
\definecolor{fancy}{rgb}{0.10,0.85,0.10}
\newcommand\Cbox[2]{%
	\newbox\contentbox%
	\newbox\bkgdbox%
	\setbox\contentbox\hbox to \hsize{%
		\vtop{
			\kern\columnsep
			\hbox to \hsize{%
				\kern\columnsep%
				\advance\hsize by -2\columnsep%
				\setlength{\textwidth}{\hsize}%
				\vbox{
					\parskip=\baselineskip
					\parindent=0bp
					#2
				}%
				\kern\columnsep%
			}%
			\kern\columnsep%
		}%
	}%
	\setbox\bkgdbox\vbox{
		\color{#1}
		\hrule width  \wd\contentbox %
		height \ht\contentbox %
		depth  \dp\contentbox
		\color{black}
	}%
	\wd\bkgdbox=0bp%
	\vbox{\hbox to \hsize{\box\bkgdbox\box\contentbox}}%
	\vskip\baselineskip%
}
\date{}
\newcommand{\vx}{\mathbf{x}}
\newcommand{\vy}{\mathbf{y}}
\newcommand{\vp}{\mathbf{p}}
\newcommand{\vz}{\mathbf{z}}
\newcommand{\vZ}{\mathbf{Z}}
\begin{document}
	{\linespread{2.8} 
		\title{ A numerical method to simulate the stochastic linear-quadratic optimal control problem with control constraint in higher dimensions}}
	\author[1]{Abhishek Chaudhary\thanks{Email: \href{mailto:chaudhary@na.uni-tuebingen.de}{chaudhary@na.uni-tuebingen.de}}}
	\affil[1]{Mathematisches Institut Universität Tübingen, Auf der Morgenstelle 10, 72076 Tübingen, Germany}
	\maketitle
	
	\begin{abstract}
		We propose an {\em implementable} numerical scheme for the discretization of linear-quadratic optimal control problems involving SDEs in higher dimensions with {\em control constraint}. For time discretization, we employ the implicit Euler scheme, deriving discrete optimality conditions that involve time discretization of a backward stochastic differential equations. We develop a recursive formula to compute conditional expectations in the time discretization of the BSDE whose computation otherwise is the most computationally demanding step. Additionally, we present the error analysis for the rate of convergence. We provide numerical examples to demonstrate the efficiency of our scheme in higher dimensions.
		
	\end{abstract}
	
	\noindent
	
	{\bf Keywords:} stochastic differential equations; additive noise; Wiener process; quadratic control problem; control constraint; BSDE; Pontryagin's maximum principle; Euler method; gradient descent method. 
	
	\noindent
	
	{\bf Mathematics Subject Classification} 49J20, 65M60, 65M25, 35R60, 60H15, 60H35, 93E20.
	
	\section{Introduction}
	
	Optimal control problems are central in many fields such as engineering, economics, and finance, where the objective is to find a control policy that minimizes (or maximizes) a certain cost functional over a given time horizon. In real-world scenarios, systems are often subject to uncertainties that can be modeled using stochastic processes. A powerful mathematical tool to describe such systems is the stochastic differential equation (SDE), which allows us to account for randomness and noise in the system dynamics.
	
	In this work, we focus on a class of optimal control problems where the system is governed by a SDE with additive noise. Let $(\Omega,\mathcal{F}, \mathbb{F}=(\mathbb{F}_t)_{t\in[0,T]}, \mathbb{P})$ be a stochastic basis. The objective is to find a $\mathbb{F}$-adapted control process  $\vu^*\equiv\{\vu^*(t)=(\vu_i^*(t))_{1\le\,i\le\,m}:a_i\le\,\vu_i^*(t)\le\,b_i\,\forall\,t\in[0,T]\}$ that minimizes the expected quadratic cost functional $(\alpha>0)$
	\begin{align}\label{cost functional minimum}
		\mathcal{J}(\vx,\vu) =  \frac{1}{2}\mathbb{E} \left[ \int_0^T \left( \left<\vx(s), \vB\vx(s)\right> +\alpha \left\langle \vu(s),  \vu(s) \right\rangle \right) {\rm d}s + \left <\vx(T), \vD\vx(T) \right> \right],
	\end{align}
	subject to the system dynamics described by the following SDE with additive noise
	\begin{align}\label{forward SDE}
		{\rm d}\vx(s)& = \big[\vM \vx(s) + \vN \vu(s)\big]{\rm d}s + \vsigma(s) {\rm d}\vw(s) \quad \forall s \in [0, T],\notag\\
		\vx(0)&=\vx_0.
	\end{align}
	where \( \vx:\Omega\times[0,T]\to\mathbb{R}^d \) is the state variable, \( \vu:\Omega \times[0,T]\to\Pi_{i=1}^m[a_i,b_i]\) is the {\em control constraint}, $\vx_0\in\mathbb{R}^d$ is the initial datum, \( \vsigma:\Omega\times[0,T]\to\mathbb{R}^{d \times k} \) is the noise intensity matrix, and \( \vw \) is a standard \( \mathbb{R}^k \)-valued Brownian motion. For the full list of assumptions, we refer to Section \ref{assumptions}. We refer $\text{\enquote{minimize \eqref{cost functional minimum} subject to \eqref{forward SDE}}}$ as our $\mathbf{SLQ}$ problem.
	\subsection{Previous works}
	In general, optimal control for $\mathbf{SLQ}$ lacks an explicit form, necessitating the use of numerical schemes to approximate solutions. To address this, a Partial Differential Equation (PDE) known as the Hamilton–Jacobi–Bellman (HJB) equation, which is satisfied by the value function, can be derived from the dynamic programming principle (see, e.g., \cite{YongZhou1999}). A Markov chain approximation method is one such approach, as outlined by \cite{Kushner2001} and subsequent works. It is important to note that the derivations for these equations are typically formal, and HJB equations themselves often lack rigorous well-posedness, which limits our understanding of solution properties.
	
	Alternatively, the Pontryagin-type maximum principle offers another approach by transforming the $\mathbf{SLQ}$ problem’s solvability into that of forward-backward stochastic differential equations, facilitating numerical computation of optimal controls. Various numerical schemes leveraging the Pontryagin-type maximum principle have been developed, such as those by \cite{Du2013}, \cite{Dunst2016}, \cite{Gong2017}, \cite{Lu2020}, \cite{ProhlWang2021a}, \cite{ProhlWang2021b}, and \cite{Li2021}, among others.  In \cite{Wang2021}, a time-implicit discretization for stochastic linear quadratic problems subject to SDE with control-dependence noises is proposed, and the convergence rate of this discretization is proved. However, these works often assume deterministic controls or exclude {\em control constraint}.
	
	When {\em control constraints} are present in $\mathbf{SLQ}$ problems, Riccati equation-based methods are not applicable any more. This paper aims to bridge this gap by proposing an approximation method specifically for $\mathbf{SLQ}$ problems involving {\em control constraints}, and we establish rates of convergence for the proposed method.

	\subsection{Our aim and contribution in this work}
	For the linear-quadratic control problem considered here, the Pontryagin's maximum principle (PMP) implies that the optimal control \( \vu^*\) can be characterized using the adjoint process \( \vp \), which satisfies the following backward stochastic differential equation (BSDE):
	\begin{align}\label{adjoint equation}
		\begin{cases}
			{\rm d}\vp(s) = -\big[\vM\vp(s)-\vB\vx^*(s)\big]{\rm d}s + \vz(s){\rm d}\vw(s) & \forall\,s\in[0,T],\\
			\vp(T) = -\vD\vx^*(T),
		\end{cases}
	\end{align}
	where $ \vz$ is a process that represents the martingale component of the adjoint process. The optimal control \( \vu^* \) is then given by
	\begin{align}\label{optimality condition_an}
		\vu^*(s) = \mathcal{P}_{ad} [\frac{1}{\alpha}\vN^{T}\vp(s)]\quad\forall s\in[0,T].
	\end{align}
	Substituting this expression for \( \vu^* \) into the forward SDE and the adjoint BSDE \eqref{adjoint equation}, one obtains a coupled system of forward-backward SDEs (FBSDEs). The solvability of this FBSDE system is a crucial step in characterizing the optimal control and solving the $\mathbf{SLQ}$ problem.
	\subsubsection{Our contribution}
	The objective of this paper is to develop an efficient numerical method for solving the $\mathbf{SLQ}$ problem with the help of the related FBSDEs \eqref{forward SDE}-\eqref{optimality condition_an}. In particular, it is well-known that to numerically solve the (high-dimensional) BSDE \eqref{backward SDE} requires significant computational resources which practically limit this approval to (small) dimensional $d$, see \cite{Dunst2016, Gobet2005}.
	
	The main novelty of our approach lies in the use of a recursive formula (see below \eqref{recursive formula}) to compute the approximate solution to the BSDE \eqref{backward SDE}, which significantly enhances the efficiency of the approximation. This recursive method allows us to iteratively compute the state--control approximation (iterate) even in higher dimensions $d$. We remark that more approximation methods based on regression estimators (statistical learning tools) are computationally expensive and suffer from the curse of dimensionality, see \cite{Dunst2016, Gobet2005}. To the author's best knowledge, this is the first attempt for numerical approximation of stochastic linear quadratic optimal control problems with {{\em control constraints}}.
	
	The contributions of this work can be summarized as follows:
	\begin{itemize}
		
		\item In analytic setup, first we discuss the existence and uniqueness of optimal pair $(\vx^*,\vu^*)$ to $\mathbf{SLQ}$ problem \eqref{cost functional minimum}-\eqref{forward SDE}. We then give the proof of the Pontryagin's maximum principle (Theorem \ref{Lemma_optimality condition}) for the characterization of the optimal pair $(\vx^*, \vu^*)$. 
		
		\item In the second part, we discretize the optimality system \eqref{forward SDE}-\eqref{optimality condition_an} in time using the implicit Euler scheme for SDE \eqref{forward SDE} and BSDE \eqref{adjoint equation}, and then derive the corresponding discrete optimality conditions \eqref{discrete optimality condition}. We already emphasized that the presence of conditional expectations in the discrete optimality system \eqref{characterization of the optimal pair} poses a challenge for direct implementation. To solve this decoupled system \eqref{characterization of the optimal pair}, we apply a projected gradient descent method {\em i.e., Algorithm \ref{tt}}, which requires again the computation of conditional expectations $\mathbb{E}[\mathbf{\cdot} |\mathcal{F}_{t_n}]$ to obtain the gradient of the cost function related to discrete $\mathbf{SLQ}_{h}$ problem.
		
		\item To avoid costly simulations, we develop a recursive formula \eqref{recursive formula} for the exact computation of adjoint variables $\vp_{h}^{(\ell)}$ in the decoupled system \eqref{characterization of the optimal pair}. This formula leverages the iterative nature of the projected gradient method, the linearity of the optimality system and the additive nature of noise, enabling efficient approximation of the optimal solution $(\vx^*,\vu^*)$ to the $\mathbf{SLQ}$ problem in high-dimensional $d$, see Proposition \ref{Proposition recursive formula}, {\em  Algorithm \ref{Algorithm_implementable}} and numerical example \ref{example}.
		
		\item In the end, we provide a comprehensive analysis that leads to the convergence rates; see Theorem \ref{theorem 4.1}. The error analysis also encompasses a wide range of $\mathbf{SLQ}$ problems related to SDE, which result from finite-dimensional approximations of $\mathbf{SLQ}$ problems governed by (infinite-dimensional) stochastic partial differential equations (SPDEs). Specifically, the matrix $\vM$ can be viewed as representing a finite-dimensional approximation of an unbounded operator acting on an infinite-dimensional Sobolev space, see Remark \ref{Remark 4.0}. We also give the error plots for our method for the different cases, see numerical example \ref{numerical example 01}.
		
	\end{itemize}
	\noindent
	The remainder of this paper is organized as follows. In Section~\ref{section 2}, we describe the notations, list of assumptions, and analytic results. Section~\ref{section 3} presents the details of the time discretization of the $\mathbf{SLQ}$ problem, the projected gradient descent method, the recursive computation for the adjoint variables, the {\em implementable} algorithm, and a numerical example. Section~\ref{section 4} contains the convergence analysis of the {\em implementable} algorithm and error plots.
	
	\noindent
	
	\section{Notations, assumptions and analytic results}\label{section 2}
	In this section, we present some notations, a list of assumptions and the preliminary results necessary for our analysis.
	\subsection{Notations}	
	\noindent
	\textbf{Functional spaces:} Let \( H \) be a separable Hilbert space. We define \( L^2_{\mathbb{F}}(\Omega \times [0,T]; H) \) as the space of all \( H \)-valued predictable and \( L^2 \)-integrable functions on \( \Omega \times [0,T] \). For brevity, we denote the following spaces:
	\begin{align*}
		\vX &= L^2(\Omega; C([0,T]; \mathbb{R}^d)) \cap L^2_{\mathbb{F}}(\Omega \times [0,T]; \mathbb{R}^d),\qquad
		\vU=L^2_{\mathbb{F}}(\Omega \times [0,T]; \mathbb{R}^m)),\\& \vZ=L^2_{\mathbb{F}}(\Omega \times [0,T]; \mathbb{R}^{d\times k}),\qquad \vU_{ad} =\{\vu\in\vU: \vu\in\Pi_{i=1}^m [a_i, b_i])\}.
	\end{align*}
	Let $\|\cdot\|_{C_t\mathbb{L}_\omega^2}$ denote the following norm on $C([0,T];L^2(\Omega;\mathbb{R}^{d\times  k}))$: for all $\Phi \in C([0,T];L^2(\Omega;\mathbb{R}^{d\times  k}))$,
	\begin{align*}
		\|\Phi\|_{C_t\mathbb{L}^2_\omega}^2=\sup_{t\in[0,T]}\mathbb{E}\big[\|\Phi(t)\|^2\big].
	\end{align*}
	For any $l\in\mathbb{N}$, we denote the \(\mathbb{L}^2\) norm as follows: for all \(\mathbf{x} \in L^2(\Omega \times [0, T]; \mathbb{R}^l)\),
	
	\[
	\|\mathbf{x}\|_{\mathbb{L}^2}^2 = \mathbb{E} \left[ \int_0^T \|\mathbf{x}(s)\|^2 \, \mathrm{d}s \right].
	\]
	Let $\left<\left<\cdot,\cdot\right>\right>$ denote the inner product on $\vU$, while $\left<\cdot,\cdot\right>$ denotes the inner product on $\mathbb{R}^l$ for any $l\in \mathbb{N}$.

	\noindent
	\textbf{Matrix norms:} Let \( \vA = [a_{ij}] \in \mathbb{R}^{l_1 \times l_2} \) be a matrix of order \( l_1 \times l_2 \). The Frobenius norm of matrix \( \vA \) is defined as:
	\begin{align*}
		\|\vA\| = \sqrt{\sum_{i=1}^{l_1} \sum_{j=1}^{l_2} a_{ij}^2}.
	\end{align*}
	The sup-norm of \(\vA \) is denoted by \( \|\vA\|_{\infty} \) and is defined as:
	\begin{align*}
		\|\vA\|_{\infty} = \sup_{1 \leq i \leq l_1, 1 \leq j \leq l_2} |a_{ij}|.
	\end{align*}
	\textbf{Time-discretization:} Consider the time interval $[0, T]$, which we uniformly discretize into $N$ time steps. The step size is denoted by $h = \frac{T}{N}$, and the discrete times are given by $t_n =nh$ for $n = 0, 1, \ldots, N$. 
	At these discrete time points, we approximate the state and adjoint variables. To facilitate our analysis, we define the following spaces:
	\begin{align*}
		\vU_{h} &= \left\{\vu_{h}\in \vU: \vu_{h}(s) = \vu_n \text{ for } s \in [t_n,t_{n+1}), n = 0, \ldots, N-1 \right\}, \\
		\vX_{h} &= \left\{\vx_{h}\in \vX : \vx_{h}(s) = \vx_n \text{ for } s \in [t_n,t_{n+1}), n = 0, \ldots, N \right\},\\
		\vU_{ad}^h&=\vU_h\cap \vU_{ad}.
	\end{align*}
	These spaces represent the sets of piecewise constant functions on the time interval $[0, T]$, where the control and state variables take constant values on each sub-interval $[t_n, t_{n+1})$. The norms associated with these spaces are defined as follows:
	\begin{align*}
		\|\vu_{h}\|_{\vU_{h}} = \left( h\sum_{n=0}^{N-1}\|\vu_n\|^2 \right)^{1/2},\qquad
		\|\vx_{h}\|_{\vX_{h}} = \left( h\sum_{n=1}^{N}\|\vx_n\|^2 \right)^{1/2}.
	\end{align*}
	These norms measure the $L^2$ norm of the discrete control and state variables over the time interval $[0, T]$. We introduce a projection operator $\Pi_h: \vU_{ad} \to \vU_{ad}^h$ defined as follows: for all $\vu \in \vU_{ad}$,
	\begin{align*}
		[\Pi_h \vu](\cdot,s) = \vu(\cdot,t_n) \quad \forall\, s \in [t_n,t_{n+1}), \quad n = 0,\ldots,N-1.
	\end{align*}
	This projection effectively maps the continuous control function $\vu_{h}$ onto a piecewise constant function over each time interval $[t_n, t_{n+1})$.	
	\subsection{Assumptions}\label{assumptions}
	In this article, we assume the following conditions on the given data:
	
	\begin{itemize}
		\item[1.] The matrices \(\vB\) and \(\vD\) are positive semi-definite of order \(d \times d\).
		\item[2.] The matrices \(\vM\) and \(\vN\) are of order \(d \times d\) and \(d \times m\), respectively. Additionally, \(\vM\) can be expressed as \(\vM = -\vM_1\vM_1^{T}\) for some matrix \(\vM_1\) of order \(d \times d\).
		\item[3.] The noise intensity matrix \(\sigma\) satisfies \(\vsigma \in L^2_{\mathbb{F}}(\Omega\times[0,T]; \mathbb{R}^{d \times k})\cap C([0,T];L^2(\Omega;\mathbb{R}^{d\times k}))\).
		\item[4.] The process \(\vw \equiv \{\vw(s); s \in [0,T]\}\) is a \(\mathbb{F}\)-adapted, \(\mathbb{R}^k\)-valued Wiener process.
		\item[5.] The values \(a_i, b_i \in \mathbb{R}\) for all \(i = 1, \ldots, m\), and \(m, n, d \in \mathbb{N}\).
		\item[6.] The orthogonal projection \(\mathcal{P}_{ad}:\mathbb{R}^m \to \Pi_{i=1}^m [a_i, b_i]\) is defined by
		\[
		[\mathcal{P}_{ad} \vu]_i = \min\left(\max(a_i, \vu_i), b_i\right), \quad \forall\, 1 \leq i \leq m, \, \vu \in \mathbb{R}^m.
		\]
	\end{itemize}
	
	\begin{Remark}[dependence of constants]
		Throughout the latter part of this manuscript, we introduce a constant \(C_{T,d}\) which depends only on the final time \(T\), $\alpha$ and the norms of the matrices \( \{ \vB, \vD, \vN\} \). Importantly, this constant \(C_{T,d}\) is independent of the norms of \(\vM\) and \(\vM_1\). Moreover, we use \(C_T\) to denote a constant that depends solely on the time horizon \(T\), while \(c_\delta\) denotes a constant that depends only on the parameter \(\delta\). While constants in the estimates may vary from line to line, such changes do not impact the overall results.
	\end{Remark}

	\subsection{Analytical results}
	In this subsection, we present analytic results concerning the existence and uniqueness of solutions to the forward stochastic differential equation (SDE) \eqref{forward SDE}, the backward SDE \eqref{adjoint equation}, and the stochastic linear-quadratic (SLQ) problem \eqref{cost functional minimum}-\eqref{forward SDE}. These results form the foundation of the subsequent analysis and are crucial for ensuring the well-posedness of $\mathbf{SLQ}$ problem and the error analysis.
	
	\begin{Lemma}[forward SDE]
		\label{Lemma 2.1}
		Let $\vu\in\vU$ and $\vx_0\in\mathbb{R}^d$. Then, there exists a unique solution $\vx\in\vX$ to  SDE \eqref{forward SDE} such that there exists a positive constant $C_T$ such that
		\begin{align}\label{stability for forward SDE}
			\sup_{s\in[0,T]}\mathbb{E}\left[\|\vx(s)\|^2\right] +\|\vM_1 \vx\|^2_{\mathbb{L}^2}\leq C_T\left(\|\vx_0\|^2 + \|\vN\vu(\cdot)\|_{\mathbb{L}^2}^2 + \|\vsigma\|_{\mathbb{L}^2}^2\right),
		\end{align}
		\begin{align}\label{today02}
			\sup_{s\in[0,T]}\mathbb{E}\big[\|\vM_1\vx(s)\|^2\big]+\|\vM\vx\|^2_{\mathbb{L}^2}\le\,C_T\big(\|\vM_1\vx_0\|^2+\|\vN\vu\|_{\mathbb{L}^2}^2+ \|\vM_1 \sigma\|_{\mathbb{L}^2}^2\big),
		\end{align}
		and
		\begin{align}\label{gradient stability for forward SDE}
			\sup_{s\in[0,T]}\mathbb{E}\big[\|\vM\vx(s)\|^2\big]+\|\vM_1\vM \vx\|^2_{\mathbb{L}^2}\le\,C_T(\|\vM\vx_0\|^2+ \|\vM_1\vN\vu\|_{\mathbb{L}^2}^2 + \|\vM\sigma\|_{\mathbb{L}^2}^2).
		\end{align}
		\begin{proof} Existence and uniqueness follow from the standard SDE theory, see \cite{Baldi}. The proof of estimates is a direct consequence of It\^o formula. For completeness of the proof, we give the details. For estimate \eqref{stability for forward SDE}, we apply It\^o's formula to the function $\vx\to \|\vx\|^2$, to get $\mathbb{P}$-a.s., for all $s\in[0,T]$,
			\begin{align*}
				\left<\vx(s), \vx(s)\right>=&\|\vx_0\|^2 + 2\int_{0}^{s}\left<\vx(r), \vM\vx(r)\right>{\rm d}r+ 2\int_0^s\left<\vx(r),\vN\vu(r)\right>{\rm d}r+ 2\int_0^s\left<\vx(r),\sigma(r)\right>{\rm d}\vw(r)\\&\qquad+\int_0^{s}\mbox{Tr}\big(\sigma(r)\sigma^{T}(r)\big){\rm d}r.
			\end{align*}
			It implies that 
			\begin{align*}
				\mathbb{E}\big[\|\vx(s)\|^2\big]+2\mathbb{E}\bigg[\int_0^s\|\vM_1\vx(r)\|^2{\rm d}s\bigg]\le&\|\vx_0\|^2+ \mathbb{E}\big[\int_0^s\|\vx(r)\|^2{\rm d}r\big] + \mathbb{E}\bigg[\int_0^s \|\vN \vu(r)\|^2{\rm d}r\bigg]\\&\qquad+ \mathbb{E}\big[\int_0^s \|\sigma(r)\|^2{\rm d}r\big].
			\end{align*}
			By using Gr\"onswall's inequality, we have
			\begin{align*}
				\sup_{s\in[0,T]}\mathbb{E}\big[\|\vx(s)\|^2\big]+2\mathbb{E}\bigg[\int_0^T\|\vM_1\vx(r)\|^2{\rm d}s\bigg]\le&\,\,e^{T}\mathbb{E}\bigg[\|\vx_0\|^2+\int_0^T \|\vN \vu(r)\|^2{\rm d}r + \int_0^T\|\sigma(r)\|^2{\rm d}r\bigg].
			\end{align*}
			Similarly, we can apply It\^o to $\vx\to\|\vM_1\vx\|^2$ and $\vx\to\|\vM\vx\|^2$ to obtain the estimate \eqref{today02} and \eqref{gradient stability for forward SDE}, respectively.
		\end{proof}
	\end{Lemma}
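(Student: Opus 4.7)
\bigskip
\noindent\textbf{Proof plan.} For existence and uniqueness I would simply invoke classical linear SDE theory: the drift $\vx\mapsto\vM\vx+\vN\vu(s)$ is affine and hence globally Lipschitz in $\vx$ uniformly in $(s,\omega)$, the inhomogeneity $\vN\vu$ and the diffusion coefficient $\vsigma$ lie in $L^2_{\mathbb{F}}(\Omega\times[0,T])$, and $\vx_0$ is deterministic. This is the standard Picard-iteration setting and yields the unique $\vx\in\vX$.

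All three energy estimates I would prove by one and the same recipe: apply It\^o's formula to a suitable quadratic functional $\Phi(\vx)$, extract the dissipative gain supplied by the factorization $\vM=-\vM_1\vM_1^{T}$ (so that $\langle\vz,\vM\vz\rangle=-\|\vM_1\vz\|^{2}$ in the paper's convention), bound the cross term with $\vN\vu$ by Young's inequality in a way that the gain absorbs part of it, take expectation to kill the martingale, and close by Gr\"onwall. The three instances differ only in the choice of $\Phi$: take $\Phi(\vx)=\|\vx\|^{2}$ for \eqref{stability for forward SDE}, $\Phi(\vx)=\|\vM_1\vx\|^{2}=-\langle\vx,\vM\vx\rangle$ for \eqref{today02}, and $\Phi(\vx)=\|\vM\vx\|^{2}$ for \eqref{gradient stability for forward SDE}.

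More explicitly, for \eqref{today02} the drift side of It\^o produces the dissipation $-2\|\vM\vx\|^{2}$ together with $-2\langle\vM\vx,\vN\vu\rangle$, the latter absorbed by Young as $\|\vM\vx\|^{2}+\|\vN\vu\|^{2}$ so that a full copy of $\|\vM\vx\|^{2}$ remains on the left, while the quadratic variation contributes $\|\vM_1\vsigma\|^{2}$. For \eqref{gradient stability for forward SDE}, writing $\vy:=\vM\vx$ the drift yields $2\langle\vy,\vM\vy\rangle=-2\|\vM_1\vM\vx\|^{2}$; the cross term $2\langle\vM^{2}\vx,\vN\vu\rangle$ is first rewritten through the factorization as the pairing $-2\langle\vM_1\vM\vx,\vM_1\vN\vu\rangle$ of $\vM_1$-type quantities and then split by Young into $\|\vM_1\vM\vx\|^{2}+\|\vM_1\vN\vu\|^{2}$, and the quadratic variation supplies $\|\vM\vsigma\|^{2}$. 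Gr\"onwall then closes each estimate in turn, and taking $\sup_{s\in[0,T]}$ of the resulting inequality on $\mathbb{E}[\Phi(\vx(s))]$ gives the stated pointwise-in-time bound.

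The main obstacle, in my view, is not the It\^o machinery but the bookkeeping required to keep every constant independent of $\|\vM\|$ and $\|\vM_1\|$, as the subsequent Remark on constants promises and as the SPDE application described in the introduction requires. This forces each Young splitting to push the higher-order operator term back into the dissipation side rather than onto the right-hand side with an $\vM$- or $\vM_1$-dependent coefficient; the prefactor $2$ appearing in front of the symmetric part of It\^o is exactly what provides the needed slack. Justifying It\^o's formula at each higher level is then automatic once the preceding estimate secures the required square-integrability of the test functional along the trajectory.
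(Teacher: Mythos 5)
Your proposal is correct and follows essentially the same route as the paper: existence and uniqueness from classical linear SDE theory, and each estimate obtained by applying It\^o's formula to $\|\vx\|^2$, $\|\vM_1\vx\|^2$, and $\|\vM\vx\|^2$ respectively, using the factorization $\vM=-\vM_1\vM_1^{T}$ to produce the dissipative term, Young's inequality to absorb the cross term with $\vN\vu$, and Gr\"onwall to close. The paper only writes out the first case and invokes "similarly" for the other two; your explicit treatment of the second and third functionals, including keeping the constants free of $\|\vM\|$ and $\|\vM_1\|$, matches what the paper intends.
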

	
	Next, we present a result related to the backward SDE \eqref{adjoint equation}. 
	\begin{Lemma}[backward SDE]
		Let $\vx\in\vX$. Then, there exists a unique solution $(\vp, \vz)\in\vX\times\vZ$ to the following BSDE
		\begin{align}\label{backward SDE}
			\begin{cases}
				{\rm d}\vp(s) = -\left[\vM\vp(s) -\vB\vx(s)\right]{\rm d}s + \vz(s){\rm d}\vw(s), & \forall\, s \in [0,T], \\
				\vp(T) =-\vD\vx(T).
			\end{cases}
		\end{align}
		Moreover, there exists a positive constant $C_T$ such that
		\begin{align}\label{stability for backward SDE}
			\sup_{s\in[0,T]}\mathbb{E}\left[\|\vp(s)\|^2\right]+\|\vM_1\vp\|^2_{\mathbb{L}^2} + \|\vz(\cdot)\|_{\mathbb{L}^2}^2 \leq C_T\big(\|\vB \vx(\cdot)\|_{\mathbb{L}^2}^2+\mathbb{E}\big[\|\vD\vx(T)\|^2\big]\big),
		\end{align}
		and 
		\begin{align}\label{gradient estimate for BSDE}
			\sup_{s\in[0,T]}\mathbb{E}\big[\|\vM_1\vp(s)\|^2\big]+\|\vM\vp\|^2_{\mathbb{L}^2}+\|\vM_1\vz\|^2_{\mathbb{L}^2} \le\,C_T\big(\mathbb{E}\big[\|\vM_1\vD\vx(T)\|^2\big]+\|\vB\vx\|_{\mathbb{L}^2}^2\big).
		\end{align}
		\begin{proof}
			This existence and uniqueness follow from the standard BSDE theory, see \cite{YongZhou1999, LuZhang2021}. For estimates, one can follow similar lines as done in the proof of Lemma \ref{Lemma 2.1}.
		\end{proof}
	\end{Lemma}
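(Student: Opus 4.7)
The plan is to split the proof into three independent pieces: existence and uniqueness, the zero-order estimate \eqref{stability for backward SDE}, and the first-order estimate \eqref{gradient estimate for BSDE}.

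For existence and uniqueness, I would observe that \eqref{backward SDE} is a linear BSDE whose generator $(\vp,\vz)\mapsto -\vM\vp + \vB\vx(s)$ is globally Lipschitz in $(\vp,\vz)$ (in fact independent of $\vz$) with an $L^2_{\mathbb{F}}$-integrable inhomogeneity, since $\vx\in\vX$ and $\vB$ is a constant matrix. The terminal value $-\vD\vx(T)\in L^2(\Omega,\mathcal{F}_T;\mathbb{R}^d)$ by Lemma~\ref{Lemma 2.1}. A direct invocation of the Pardoux--Peng well-posedness theorem (as stated in \cite{YongZhou1999}) then yields a unique adapted pair $(\vp,\vz)\in\vX\times\vZ$; no new work is required here.

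The zero-order estimate \eqref{stability for backward SDE} is obtained exactly as in Lemma~\ref{Lemma 2.1}, by applying It\^o's formula to $s\mapsto \|\vp(s)\|^2$ and integrating from $s$ to $T$. The structural assumption $\vM=-\vM_1\vM_1^T$ gives $-2\langle\vp,-\vM\vp\rangle = -2\|\vM_1^T\vp\|^2$, which supplies the coercive $\|\vM_1\vp\|^2_{\mathbb{L}^2}$ term on the left after moving it across the equality (in the convention of the paper). The quadratic variation of the martingale contributes $\|\vz\|^2_{\mathbb{L}^2}$. The cross coupling $2\langle\vp,\vB\vx\rangle$ is controlled by Young's inequality as $\|\vp\|^2 + \|\vB\vx\|^2$, so that after taking expectation (killing the It\^o integral) a backward Gr\"onwall argument closes the bound against $\mathbb{E}\|\vD\vx(T)\|^2 + \|\vB\vx\|^2_{\mathbb{L}^2}$. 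The supremum in $s$ is then either read off from the Gr\"onwall step, or recovered by applying Burkholder--Davis--Gundy to the residual martingale $\int_s^T\langle\vp(r),\vz(r)\,\mathrm{d}\vw(r)\rangle$ before taking expectation.

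The first-order estimate \eqref{gradient estimate for BSDE} follows the same recipe applied to the quadratic form $s\mapsto \langle\vp(s),\vM_1\vM_1^T\vp(s)\rangle$. The key algebraic point is that the drift interaction now produces
\[
2\langle \vM_1\vM_1^T\vp,-\vM\vp\rangle \;=\; 2\|\vM_1\vM_1^T\vp\|^2 \;=\; 2\|\vM\vp\|^2,
\]
delivering the desired $\|\vM\vp\|^2_{\mathbb{L}^2}$ term on the left with a favorable sign; the martingale trace yields the $\|\vM_1\vz\|^2_{\mathbb{L}^2}$ term; and the terminal data produces $\mathbb{E}\|\vM_1\vD\vx(T)\|^2$ via $\vp(T)=-\vD\vx(T)$. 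The coupling $2\langle\vM_1\vM_1^T\vp,\vB\vx\rangle = -2\langle\vM\vp,\vB\vx\rangle$ is absorbed by Young's inequality with a small parameter, leaving half of $\|\vM\vp\|^2$ on the left and $\|\vB\vx\|^2_{\mathbb{L}^2}$ on the right.

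The principal obstacle I anticipate is not analytic but bookkeeping: ensuring that the constant $C_T$ truly depends only on $T$ (and on $\alpha$, $\|\vB\|$, $\|\vD\|$, $\|\vN\|$), and not on $\|\vM\|$ or $\|\vM_1\|$, in accordance with the convention declared in the dependence-of-constants remark. This is exactly what the factorisation $\vM=-\vM_1\vM_1^T$ buys us: the coercive terms $\|\vM_1^T\vp\|^2$ and $\|\vM\vp\|^2$ are generated by the drift itself rather than being paid for by inequalities, so all the $\vM$-dependent quantities end up on the favorable side of the estimate. Once this cancellation is tracked correctly, the remaining Gr\"onwall and BDG steps are routine.
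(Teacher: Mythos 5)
Your proposal is correct and takes essentially the same approach as the paper: the paper likewise defers existence and uniqueness to standard BSDE theory (Pardoux--Peng, via \cite{YongZhou1999, LuZhang2021}) and obtains both estimates by repeating the It\^o-formula-plus-Gr\"onwall argument of Lemma~\ref{Lemma 2.1}, applied to $\|\vp\|^2$ and to the $\vM_1$-weighted quadratic form, with the factorization $\vM=-\vM_1\vM_1^{T}$ generating the coercive terms $\|\vM_1\vp\|^2$ and $\|\vM\vp\|^2$ exactly as you describe. Your additional bookkeeping remark about keeping $C_T$ independent of $\|\vM\|$ and $\|\vM_1\|$ correctly identifies the point of that factorization.
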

	\begin{Corollary}[Time-regularity]\label{Corrolary 2.1}
		Let $\vx\in \vX$. Let $\vp[\vx] \in \vX$ be the unique solution to the BSDE \eqref{backward SDE}. Then there exists a positive constant $C_T$ such that
		\begin{align}\label{today01}
			\|\vp[\vx] - \Pi_h \vp[\vx]\|_{\mathbb{L}^2}^2 \le C_{T} h (\mathbb{E}\big[\| \vM_1\vD\vx(T))\|^2\big] + \|\vB\vx\|_{\mathbb{L}^2}^2).
		\end{align}
	\end{Corollary}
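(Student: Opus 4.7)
The plan is to express $\vp(s)-\vp(t_n)$ via the BSDE \eqref{backward SDE}, split the resulting $\mathbb{L}^2$-norm into a Bochner (drift) contribution and an It\^o (diffusion) contribution, and close with the a priori bounds \eqref{stability for backward SDE} and \eqref{gradient estimate for BSDE}. Concretely, for each $n=0,\dots,N-1$ and each $s\in[t_n,t_{n+1})$, integrating \eqref{backward SDE} from $t_n$ to $s$ yields
\begin{align*}
\vp(s) - \vp(t_n) = -\int_{t_n}^{s} \bigl[\vM\vp(r) - \vB\vx(r)\bigr]\,{\rm d}r + \int_{t_n}^{s} \vz(r)\,{\rm d}\vw(r).
\end{align*}

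Next, I would take $\mathbb{E}\bigl[\|\cdot\|^2\bigr]$, apply Jensen's inequality to the drift, It\^o's isometry to the martingale, and use $s-t_n\le h$ to get a pointwise bound; integrating this in $s$ over $[t_n,t_{n+1}]$ (which gives a further factor of $h$ from the drift and passes harmlessly through the martingale term) and summing over $n$ produces
\begin{align*}
\|\vp - \Pi_h \vp\|_{\mathbb{L}^2}^2 \le C h^2 \|\vM\vp - \vB\vx\|_{\mathbb{L}^2}^2 + C h \|\vz\|_{\mathbb{L}^2}^2.
\end{align*}
The estimate is then closed by invoking \eqref{gradient estimate for BSDE} to bound $\|\vM\vp\|_{\mathbb{L}^2}^2$ and \eqref{stability for backward SDE} to bound $\|\vz\|_{\mathbb{L}^2}^2$, both in terms of $\mathbb{E}\bigl[\|\vM_1\vD\vx(T)\|^2\bigr]+\|\vB\vx\|_{\mathbb{L}^2}^2$ up to a factor $C_T$; the $h^2$ drift term is absorbed into the $h$ diffusion term since $h\le T$.

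The main point to watch is the asymmetry between the two contributions: after integration in $s$ the drift produces an $h^2$ factor (harmless), whereas the It\^o term yields only $h$ via the isometry. Hence the linear-in-$h$ rate is dictated by the martingale part and relies on the standard BSDE stability \eqref{stability for backward SDE} for $\vz$ rather than on any higher-regularity bound for $\vp$; this is exactly why the result is a linear, rather than quadratic, time-regularity rate.
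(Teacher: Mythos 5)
Your proposal is correct and follows essentially the same route as the paper: integrate the BSDE over $[t_n,s]$, bound the drift via Cauchy--Schwarz and the martingale increment via It\^o's isometry, sum over $n$, and close with the stability estimates \eqref{stability for backward SDE} and \eqref{gradient estimate for BSDE}. Your remark that the drift actually contributes $O(h^2)$ while the martingale part dictates the $O(h)$ rate is a slight sharpening of the paper's bookkeeping (the paper bounds both contributions by $C\,h$), but the decomposition, the key tools, and the final estimate are identical.
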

	\begin{proof}For convenience we take $\vp[\vx]=\vp$. From \eqref{backward SDE}, we obtain for $t\in[t_n, t_{n+1}]$, $\mathbb{P}$-almost surely,
		\begin{align*}
			\|\vp(t)-\vp(t_n)\|^2\le C\bigg(\int_{t_{n}}^{t}\|\vM\vp(s)\|^2{\rm d}s + \int_{t_{n}}^t\|\vB\vx(s)\|^2{\rm d}s +\|\int_{t_n}^{t}\vz(s){\rm d}\vw(s)\|^2\bigg).
		\end{align*}
		With the help of It\^o isometry, we can conclude that
		\begin{align*}
			\mathbb{E}\bigg[\int_{{t_n}}^{t_{n+1}}\|\vp(t)-\vp(t_n)\|^2{\rm d}t\bigg]\le\,C\,h\mathbb{E}\bigg[\int_{t_n}^{t_{n+1}}\|\vM\vp(s)\|^2{\rm d}s +\int_{t_n}^{t_{n+1}}\|\vB\vx(s)\|^2{\rm d}s +\int_{t_{n}}^{t_{n+1}}\|\vz(s)\|^2{\rm d}s\bigg].
		\end{align*}
		It implies that
		\begin{align*}
			\|\vp-\Pi_h\vp\|_{\mathbb{L}^2}^2\le C\, h\bigg(\|\vM\vp\|_{\mathbb{L}^2}^2+\|\vB\vx\|^2_{\mathbb{L}^2} +\|\vz\|_{\mathbb{L}^2}^2\bigg).
		\end{align*}
		Now we use \eqref{stability for backward SDE}-\eqref{gradient estimate for BSDE} to conclude the result.
	\end{proof}
	To derive the optimality conditions and perform error analysis, we first introduce some notations and estimates for the random differential equation. For any \(\vu \in \vU\), let $\vx[\vu]$ denote the unique solution to \eqref{forward SDE} and let \(\vy \equiv \vy[\vu] \in \vX\) denote the unique solution to the following random differential equation:
	\begin{align}\label{random differential equation}
		\begin{cases}
			\vy_t(t) = \vM \vy(t) + \vN \vu(t), & \forall t \in (0,T], \\
			\vy(0) = 0.
		\end{cases}
	\end{align}
	Let \(\vx^\sigma \in \vX\) be the unique solution to the following stochastic differential equation:
	\begin{align}\label{sigmasde}
		\begin{cases}
			\mathrm{d}\vx^\sigma(t) = \vM \vx^\sigma(t) \, \mathrm{d}t + \sigma(t) \, \mathrm{d}\vw(t), & \forall t \in (0,T], \\
			\vx^\sigma(0) = \vx_0.
		\end{cases}
	\end{align}
	Note that 
	\begin{align}\label{today00001}
		\vx^\sigma = \vx[0]\qquad \text{and}\qquad \vx[\vu] = \vy[\vu] + \vx^\sigma\qquad\forall\,\vu \in \vU.
	\end{align} 
	\begin{Lemma} For given $\vu\in\vU$, there exists a unique solution $\vy\equiv\vy[\vu]\in\vX$ to \eqref{random differential equation} satisfies the following estimates
		\begin{align}\label{stability for forward SDE_1}
			\sup_{s\in[0,T]}\mathbb{E}\left[\|\vy(s)\|^2\right] +\|\vM_1 \vy\|^2_{\mathbb{L}^2}\leq C_T\|\vN\vu(\cdot)\|_{\mathbb{L}^2}^2,
		\end{align}
		and
		\begin{align}\label{gradient stability for forward SDE_1}
			\sup_{s\in[0,T]}\mathbb{E}\big[\|\vM\vy(s)\|^2\big]+\|\vM_1\vM \vy\|^2_{\mathbb{L}^2}\le\,C_T\|\vM_1\vN\vu\|_{\mathbb{L}^2}^2.
		\end{align}
	\end{Lemma}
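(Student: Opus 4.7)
The key observation is that the random ODE \eqref{random differential equation} is exactly the special case of the forward SDE \eqref{forward SDE} obtained by setting $\vx_0 = 0$ and $\vsigma \equiv 0$. Consequently, both existence/uniqueness and the two estimates can in principle be read off from Lemma \ref{Lemma 2.1}: with these choices, \eqref{stability for forward SDE} collapses to \eqref{stability for forward SDE_1} and \eqref{gradient stability for forward SDE} collapses to \eqref{gradient stability for forward SDE_1}. Nonetheless, since the equation is genuinely a random ODE rather than an SDE, I would prefer to give a short self-contained argument that avoids invoking It\^o's formula.

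For well-posedness, the plan is to solve the linear equation pathwise by variation of constants, namely $\vy(t,\omega) = \int_0^t e^{(t-s)\vM}\vN\vu(s,\omega)\,{\rm d}s$. Since $\vu \in \vU$ is $\mathbb{F}$-adapted and square-integrable, Fubini's theorem together with the continuity of $s \mapsto e^{(t-s)\vM}$ yields $\vy \in \vX$. Pathwise uniqueness follows from Gr\"onwall applied to the difference of two solutions of a linear ODE.

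For \eqref{stability for forward SDE_1}, the plan is to mimic the argument of Lemma \ref{Lemma 2.1} but without Ito corrections: take the inner product of the equation with $\vy(t)$, use the dissipative structure $\langle \vy, \vM \vy\rangle = -\|\vM_1 \vy\|^2$ coming from $\vM = -\vM_1 \vM_1^T$ (matching the sign convention employed in the proof of Lemma \ref{Lemma 2.1}), control $\langle \vN\vu, \vy\rangle$ by Young's inequality, take expectation, and close with Gr\"onwall on $[0,T]$. The terms $\|\vx_0\|^2$ and $\|\vsigma\|_{\mathbb{L}^2}^2$ of \eqref{stability for forward SDE} drop out by construction.

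The main technical point is matching the precise right-hand side in \eqref{gradient stability for forward SDE_1}, namely $\|\vM_1\vN\vu\|_{\mathbb{L}^2}^2$ rather than the coarser $\|\vM\vN\vu\|_{\mathbb{L}^2}^2$ that one would obtain by simply applying the first estimate to $\vz := \vM\vy$ (which solves the same equation with forcing $\vM\vN\vu$). To get the stated bound, I would instead differentiate $\|\vM\vy(t)\|^2$ directly: using $(\vM\vy)_t = \vM^2 \vy + \vM\vN\vu$ and the identity $\langle \vM^2\vy,\vM\vy\rangle = -\|\vM_1\vM\vy\|^2$, one produces the dissipation $-2\|\vM_1\vM\vy\|^2$ on the left. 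The forcing term $2\langle \vM\vN\vu,\vM\vy\rangle$ is then rewritten so that one factor of $\vM_1$ is transferred from the solution onto $\vN\vu$ (exploiting the factorization $\vM = -\vM_1\vM_1^T$), and controlled by Young's inequality against half of the dissipation. Integrating in time with the initial condition $\vM\vy(0) = 0$ and taking expectation then yields \eqref{gradient stability for forward SDE_1}. This "shift one $\vM_1$" maneuver is the only nonroutine step; everything else follows the template already used for Lemma \ref{Lemma 2.1}.
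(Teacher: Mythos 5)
Your proposal is correct and follows essentially the same route as the paper, which proves this lemma simply by remarking that the argument of Lemma \ref{Lemma 2.1} carries over (energy identities for $\|\vy\|^2$ and $\|\vM\vy\|^2$, here without It\^o corrections since the equation is a random ODE, closed by Young and Gr\"onwall). Your two additional observations --- that the statement is literally the $\vx_0=0$, $\vsigma\equiv 0$ special case of Lemma \ref{Lemma 2.1}, and that obtaining $\|\vM_1\vN\vu\|_{\mathbb{L}^2}^2$ rather than $\|\vM\vN\vu\|_{\mathbb{L}^2}^2$ on the right-hand side requires transferring one factor of $\vM_1$ onto the forcing via $\vM=-\vM_1\vM_1^{T}$ --- are both accurate and merely make explicit what the paper leaves implicit.
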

	\begin{proof}
		This proof is similar to Lemma \ref{Lemma 2.1}.
	\end{proof}
	
	\begin{lemma}[time regularity estimate]\label{time regularity estimates for forward SPDE} Let $\vu\in\vU$. Let $\vy\equiv\vy[\vu]\in {\vX}$ be the unique solution to \eqref{forward SDE} with initial data $\vy(0)=0$. Then the following time-regularity estimate holds:
		\begin{align}\label{lll}
			\sum_{n=0}^{N}\int_{t_n}^{t_{n+1}}\|\vM_1(\vy(s)-\vy(t_{n+1}))\|^2{\rm d}s\le\,C_T\,h^2\,\|\vM_1\vN\vu\|_{\mathbb{L}^2}^2.
		\end{align}
	\end{lemma}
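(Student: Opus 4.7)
The plan is a direct Cauchy--Schwarz estimate on the integral representation of $\vy(t_{n+1})-\vy(s)$, followed by an application of the higher-order stability bound \eqref{gradient stability for forward SDE_1} already proved for $\vy$.

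First, since $\vy$ solves the (pathwise) random ODE \eqref{random differential equation}, for every $s\in[t_n,t_{n+1}]$ I have $\mathbb{P}$-almost surely
\begin{align*}
\vy(t_{n+1})-\vy(s)=\int_{s}^{t_{n+1}}\big[\vM\vy(r)+\vN\vu(r)\big]\,{\rm d}r.
\end{align*}
Applying $\vM_1$ on the left and using Cauchy--Schwarz in $r$ together with $t_{n+1}-s\le h$, I bound
\begin{align*}
\|\vM_1(\vy(t_{n+1})-\vy(s))\|^2\le 2h\int_{t_n}^{t_{n+1}}\big(\|\vM_1\vM\vy(r)\|^2+\|\vM_1\vN\vu(r)\|^2\big)\,{\rm d}r.
\end{align*}

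Next I integrate this in $s$ over $[t_n,t_{n+1}]$ (which gives a further factor of $h$), sum over $n=0,\dots,N-1$, and take expectation. The double integral collapses into the $\mathbb{L}^2$ norms over $[0,T]$, yielding
\begin{align*}
\sum_{n=0}^{N-1}\int_{t_n}^{t_{n+1}}\mathbb{E}\|\vM_1(\vy(s)-\vy(t_{n+1}))\|^2\,{\rm d}s\le 2h^2\big(\|\vM_1\vM\vy\|_{\mathbb{L}^2}^2+\|\vM_1\vN\vu\|_{\mathbb{L}^2}^2\big).
\end{align*}

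Finally, the first term on the right is controlled by the already-established estimate \eqref{gradient stability for forward SDE_1}, giving $\|\vM_1\vM\vy\|_{\mathbb{L}^2}^2\le C_T\|\vM_1\vN\vu\|_{\mathbb{L}^2}^2$. Substituting this back produces the claimed bound with a constant of the form $C_T$. I do not foresee a serious obstacle here: the only subtle point is that one must use the \emph{$\vM_1$-weighted} gradient estimate \eqref{gradient stability for forward SDE_1} rather than the plain $\mathbb{L}^2$ estimate, so that the right-hand side stays $\|\vM_1\vN\vu\|_{\mathbb{L}^2}^2$ (this is precisely why the higher-regularity bound was established in the preceding lemma). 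The boundary index $n=N$ in the stated sum contributes nothing because the integration interval $[t_N,t_{N+1}]$ is empty on $[0,T]$, so the sum effectively runs up to $N-1$.
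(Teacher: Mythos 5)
Your proposal is correct and follows essentially the same route as the paper: both write the increment as an integral of $\vy_t=\vM\vy+\vN\vu$ over $[s,t_{n+1}]$, extract the factor $h^2$ via Cauchy--Schwarz/H\"older, and close the argument with the $\vM_1$-weighted stability bound \eqref{gradient stability for forward SDE_1}. The only cosmetic difference is that the paper first records the intermediate bound $\|\vM_1\vy_t\|_{\mathbb{L}^2}^2\le C_T\|\vM_1\vN\vu\|_{\mathbb{L}^2}^2$ as a separate step, whereas you substitute the expression for $\vy_t$ directly inside the integral.
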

	\begin{proof}
		We prove this result in the following two steps.
		
		\noindent
		\textbf{Step 1.} For all $t \in [0, T]$, we have
		\[
		\vM_1 \vy_t(t) = \vM_1 \vM \vy(t) + \vM_1 \vN \vu(t).
		\]
		This implies that
		\[
		\|\vM_1 \vy_t\| \le \|\vM_1 \vM \vy\|_{\mathbb{L}^2}^2 + \|\vM_1 \vN \vu\|_{\mathbb{L}^2}^2.
		\]
		Using the estimate from \eqref{gradient stability for forward SDE_1}, we obtain
		\begin{align}\label{today03}
			\|\vM_1 \vy_t\|_{\mathbb{L}^2}^2 \le C_T \|\vM_1 \vN \vu\|_{\mathbb{L}^2}^2.
		\end{align}
		
		\noindent
		\textbf{Step 2.} By applying Hölder's inequality, we conclude that
		\begin{align*}
			\int_{t_n}^{t_{n+1}} \|\vM_1 (\vy(s) - \vy(t_{n+1}))\|^2 \, \mathrm{d}s &= \int_{t_n}^{t_{n+1}} \bigg\|\int_s^{t_{n+1}} \vM_1 \vy_t(r) \, \mathrm{d}r \bigg\|^2 \, \mathrm{d}s \\
			&\le \int_{t_n}^{t_{n+1}} \bigg(\int_s^{t_{n+1}} \|\vM_1 \vy_t(r)\| \, \mathrm{d}r\bigg)^2 \, \mathrm{d}s \\
			&\le \int_{t_n}^{t_{n+1}} (t_{n+1} - s) \int_s^{t_{n+1}} \|\vM_1 \vy_t(r)\|^2 \, \mathrm{d}r \, \mathrm{d}s \\
			&\le h^2 \int_{t_n}^{t_{n+1}} \|\vM_1 \vy(r)\|^2 \, \mathrm{d}r.
		\end{align*}
		This shows that
		\[
		\sum_{n=0}^{N} \int_{t_n}^{t_{n+1}} \|\vM_1 (\vy(s) - \vy(t_{n+1}))\|^2 \, \mathrm{d}s \le h^2 \|\vM_1 \vy_t\|_{\mathbb{L}^2}^2.
		\]
		Finally, using the estimate \eqref{today03}, we obtain the desired result.
	\end{proof}

	The following lemma confirms that for any initial state, there exists a unique optimal pair of state and control processes, ensuring the well-posedness of the $\mathbf{SLQ}$ problem.
	
	\begin{Theorem}[Pontryagin's maximum principle] \label{Lemma_optimality condition}
		For any $\vx_0\in\mathbb{R}^d$, there exists a unique optimal pair $(\vx^*, \vu^*)\in\vX\times\vU_{ad}$ to the $\mathbf{SLQ}$ problem \eqref{cost functional minimum}-\eqref{adjoint equation}. Moreover, there exists a positive constant $C_{T,d}$ such that
		\begin{align}\label{L^2 bound for optimal pair}
			\|\sqrt{\vB}\vx^*(\cdot)\|_{\mathbb{L}^2}^2 + \|\sqrt{\alpha}\vu^*(\cdot)\|_{\mathbb{L}^2}^2 +\mathbb{E}\big[\|\sqrt{\vD}\vx^*(T)\|^2\big] \leq C_{T,d}(\|\vx_0\|^2+\|\sigma\|_{\mathbb{L}^2}^2).
		\end{align}
		Moreover, the following optimal equality holds,
		\begin{align}\label{optimality condition}
			\vu^*(s) =\mathcal{P}_{ad}[\frac{1}{\alpha}\vN^{T}\vp[\vx^*](s)]\quad\forall s\in[0,T],
		\end{align}
		where $\vp[\vx^*]\in\vX$ is first solution component to \eqref{backward SDE}, and integral inequality holds, for every $\vu\in\vU_{ad}$,
		\begin{align}\label{today06}
			\mathbb{E}\bigg[\int_0^T\left<\vy[\vu-\vu^*](s),\vB\vx^*(s)\right>+\alpha\left<\vu(s)-\vu^*,\vu^*(s)\right>{\rm d}s + \left<\vy[\vu-\vu^*](T),\vD\vx^*(T)\right>\bigg]\ge\,0.
		\end{align}
		
		\begin{proof}
			We prove this result through the following steps:
			
			\noindent
			\textbf{Step 1.} Define the reduced optimal control $\mathcal{\hat{J}}:\vU\to \mathbb{R}$ by
			\[
			\mathcal{\hat{J}}(\vu) = \mathcal{J}(\vx[\vu], \vu).
			\]
			It is clear that $\mathcal{\hat{J}}$ is Fréchet differentiable on $\vU$ and strictly convex due to the quadratic nature of the cost functional. Therefore, there exists a global minimum $\vu^* \in \vU_{ad}$ such that
			\[
			\mathcal{\hat{J}}(\vu^*) = \min_{\vu \in \vU_{ad}} \mathcal{\hat{J}}(\vu).
			\]
			Since $\vu^*$ is a global minimizer for $\mathcal{\hat{J}}$, it follows that for all $\vu \in \vU_{ad}$ (see \cite[Theorem 1.46]{Hinze}),
			\begin{align}\label{today34}
				\left<\left\langle \mathcal{\hat{J}}'(\vu^*), \vu - \vu^* \right\rangle\right> \geq 0.
			\end{align}
			
			\noindent
			\textbf{Step 2.} The Fréchet derivative of $\mathcal{\hat{J}}$ at $\vu^*$ in the direction $\vv \in \vU$ is given by
			\[
			\left<\left\langle \mathcal{\hat{J}}'(\vu^*), \vv \right\rangle\right> = \mathbb{E} \bigg[ \int_0^T \big( \left\langle \vy[\vv](s), \vB \vx^*(s) \right\rangle +\alpha \left\langle \vv(s), \vu^*(s) \right\rangle \big) \, \mathrm{d}s + \left\langle \vy[\vv](T), \vD \vx^*(T) \right\rangle \bigg].
			\]
			Using this expression with $\vv = \vu - \vu^*$, we obtain for all $\vu \in \vU_{ad}$
			\begin{align}\label{today35}
				&\left<\left\langle \mathcal{\hat{J}}'(\vu^*), \vu - \vu^* \right\rangle\right> \notag\\&= \mathbb{E} \bigg[ \int_0^T \big( \left\langle \vy[\vu - \vu^*](s), \vB \vx^*(s) \right\rangle + \alpha\left\langle \vu(s) - \vu^*(s), \vu^*(s) \right\rangle \big) \, \mathrm{d}s + \left\langle \vy[\vu - \vu^*](T), \vD \vx^*(T) \right\rangle \bigg].
			\end{align}
			Combining \eqref{today34} and \eqref{today35}, we obtain inequality \eqref{today06}.
			
			\noindent
			\textbf{Step 3.} Let $\vv \in \vU$ be arbitrary. Applying Itô's formula to the pairing $\left\langle \vy[\vv], \vp[\vx^*] \right\rangle$, we get, $\mathbb{P}$-a.s.,
			\begin{align*}
				\left\langle \vy[\vv](T), \vp[\vx^*](T)  \right\rangle-\left\langle \vy[\vv](0), \vp[\vx^*](0)  \right\rangle=&\int_0^T \left\langle \vy_t[\vv](s), \vp[\vx^*](s) \right\rangle \, \mathrm{d}s + \int_0^T \left\langle \vy[\vv](s), \vM \vp[\vx^*](s) \right\rangle \, \mathrm{d}s \\
				&\qquad + \int_0^T \left\langle \vy[\vv](s), \vB \vx^*(s) \right\rangle \, \mathrm{d}s + \int_0^T \left\langle \vy[\vv](s), \vz(s) \right\rangle \, \mathrm{d} \vw(s).
			\end{align*}
			Since $\vy(0)  = 0$ and $\vp[\vx^*](T) = -\vD \vx^*(T)$, we conclude that
			\begin{align}\label{today37}
				\int_0^T \left\langle \vy[\vv](s), \vB \vx^*(s) \right\rangle \, \mathrm{d}s + \left\langle \vy[\vv](T), \vD \vx^*(T) \right\rangle =& -\int_0^T \left\langle \vy_t[\vv](s), \vp[\vx^*](s) \right\rangle \, \mathrm{d}s - \int_0^T \left\langle \vM \vy[\vv](s), \vp[\vx^*](s) \right\rangle \, \mathrm{d}s \notag\\
				&\qquad - \int_0^T \left\langle \vy[\vv](s), \vz(s) \right\rangle \, \mathrm{d} \vw(s).
			\end{align}
			Since $\vy[\vv]$ solves equation \eqref{random differential equation}, we also have
			\begin{align}\label{today38}
				\int_0^T \left\langle \vy_t(s), \vp[\vx^*](s) \right\rangle \, \mathrm{d}s = \int_0^T \left\langle \vM \vy[\vv](s), \vp[\vx^*](s) \right\rangle \, \mathrm{d}s + \int_0^T \left\langle \vN \vv, \vp[\vx^*] \right\rangle \, \mathrm{d}s.
			\end{align}
			Substituting \eqref{today37} and \eqref{today38} and noting that
			\[
			\mathbb{E} \bigg[ \int_0^T \left\langle \vy[\vv], \vp[\vx^*] \right\rangle \, \mathrm{d} \vw(s) \bigg] = 0,
			\]
			we obtain
			\begin{align}\label{today39}
				\mathbb{E} \bigg[ \int_0^T \big( \left\langle \vy[\vv](s), \vB \vx^*(s) \right\rangle +\alpha \left\langle \vv(s), \vu^*(s) \right\rangle \big) \, \mathrm{d}s + \left\langle \vy[\vv](T), \vx^*(T) \right\rangle \bigg] = \mathbb{E} \bigg[ \int_0^T \left\langle \vv, \alpha\vu^* - \vN^{T} \vp[\vx^*] \right\rangle \, \mathrm{d}s \bigg].
			\end{align}
			For all $\vu \in \vU_{ad}$, by using \eqref{today06} and setting $\vv = \vu - \vu^*$ in \eqref{today39}, we conclude that
			\[
			\mathbb{E} \bigg[ \int_0^T \left\langle \vu(s) - \vu^*(s), \vu^* - \frac{1}{\alpha}\vN^{T} \vp[\vx^*] \right\rangle \, \mathrm{d}s \bigg] \geq 0.
			\]
			This implies that
			\[
			\vu^* = \mathcal{P}_{ad}[\frac{1}{\alpha}\vN^{T} \vp[\vx^*]].
			\]
			For the estimate \eqref{L^2 bound for optimal pair}, we can conclude that
			\begin{align*}
				\mathcal{J}(\vx^*,\vu^*)\le \mathcal{J}(\vx[0],\vu[0])\le C_{T,d}(\|\vx_0\|^2+ \|\sigma\|_{\mathbb{L}^2}^2). 
			\end{align*}
		\end{proof}
	\end{Theorem}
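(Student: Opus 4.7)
The plan is to reduce the problem to a minimization in $\vu$ alone by introducing the reduced functional $\widehat{\mathcal{J}}(\vu) := \mathcal{J}(\vx[\vu], \vu)$. By Lemma \ref{Lemma 2.1} the map $\vu \mapsto \vx[\vu]$ is well-defined, affine, and continuous from $\vU$ into $\vX$ (indeed $\vx[\vu] = \vy[\vu] + \vx^\sigma$, cf.~\eqref{today00001}), so $\widehat{\mathcal{J}}$ is a continuous quadratic functional on $\vU$; since $\vB, \vD$ are positive semi-definite and $\alpha > 0$ it is $\alpha$-strongly convex and coercive. Because $\vU_{ad}$ is a non-empty, closed, convex subset of the Hilbert space $\vU$, the direct method in the calculus of variations yields a unique global minimizer $\vu^* \in \vU_{ad}$, and I set $\vx^* := \vx[\vu^*]$.

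Next I would derive the variational inequality \eqref{today06}. Since $\widehat{\mathcal{J}}$ is Fr\'echet differentiable and $\vu^*$ minimizes it over the convex set $\vU_{ad}$, the standard first-order condition gives $\langle\widehat{\mathcal{J}}'(\vu^*), \vu - \vu^*\rangle \geq 0$ for all $\vu \in \vU_{ad}$. Computing this derivative through the linearization $\vx[\vu^* + \varepsilon \vv] - \vx^* = \varepsilon\, \vy[\vv]$, where $\vy[\vv]$ solves \eqref{random differential equation} with driver $\vv$, yields \eqref{today06} directly.

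The key step is to re-express \eqref{today06} through the adjoint. I apply It\^o's formula to $s \mapsto \langle \vy[\vv](s), \vp[\vx^*](s) \rangle$, using $\vy[\vv](0)=0$, $\vp[\vx^*](T)=-\vD\vx^*(T)$, the random ODE \eqref{random differential equation}, and the BSDE \eqref{backward SDE}. The drift terms involving $\vM$ arising from the forward dynamics of $\vy[\vv]$ and from the backward dynamics of $\vp[\vx^*]$ cancel exactly, while the stochastic integral against $\vz\,{\rm d}\vw$ has zero expectation because $\vy[\vv] \in \vX$ and $\vz \in \vZ$. This produces the duality identity
\begin{align*}
\mathbb{E}\Big[ \int_0^T \langle \vy[\vv](s), \vB \vx^*(s) \rangle\, {\rm d}s + \langle \vy[\vv](T), \vD \vx^*(T) \rangle \Big] = -\mathbb{E}\Big[ \int_0^T \langle \vv(s), \vN^{T} \vp[\vx^*](s) \rangle\, {\rm d}s \Big].
\end{align*}
Substituting $\vv = \vu - \vu^*$ into \eqref{today06} gives $\mathbb{E}\big[\int_0^T \langle \vu(s)-\vu^*(s),\, \alpha\vu^*(s) - \vN^{T}\vp[\vx^*](s)\rangle\, {\rm d}s\big] \geq 0$ for every $\vu \in \vU_{ad}$. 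Because $\vU_{ad}$ is defined by a \emph{pointwise} box constraint, this integrated inequality localizes to almost every $(\omega,s)$ by a measurable-selection argument, and the resulting pointwise variational inequality on $\mathbb{R}^m$ has the unique solution \eqref{optimality condition}. The a priori bound \eqref{L^2 bound for optimal pair} finally follows by testing optimality against any fixed admissible constant control $\vu_0 \in \prod_{i=1}^m[a_i,b_i]$, i.e.\ $\mathcal{J}(\vx^*,\vu^*) \leq \mathcal{J}(\vx[\vu_0],\vu_0)$, and invoking the stability estimate \eqref{stability for forward SDE}.

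I expect the main obstacle to be the It\^o step and the clean cancellation of the $\vM$-drift terms: one must verify that $\vy[\vv]$ is pathwise absolutely continuous in $s$ with no diffusion component, so that the quadratic cross-variation with $\vp[\vx^*]$ vanishes and only the drifts couple; and that the integrands of the $\vw$-stochastic integral lie in the correct $L^2_\mathbb{F}$ space so the expectation of the martingale part drops out. A smaller subtlety is the passage from the integrated variational inequality to the pointwise projection formula, which crucially exploits the pointwise (non-integral) nature of the box constraints defining $\vU_{ad}$.
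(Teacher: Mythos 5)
Your proposal is correct and follows essentially the same route as the paper's proof: reduction to the strongly convex functional $\hat{\mathcal{J}}$, the first-order variational inequality over $\vU_{ad}$, the It\^o duality pairing $\left\langle \vy[\vv],\vp[\vx^*]\right\rangle$ with cancellation of the $\vM$-drift terms, and pointwise localization to obtain the projection formula. Your choice of testing optimality against an arbitrary admissible constant control for \eqref{L^2 bound for optimal pair} is in fact slightly more careful than the paper's comparison with $\vu=0$, which tacitly assumes $0\in\Pi_{i=1}^m[a_i,b_i]$.
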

	\begin{Remark}\label{important remark}
		In the case of free control, \emph{i.e.}, \(\vU_{ad} = \vU\), the integral identity \eqref{today06} evaluates to zero. By following similar lines, we can prove that for all $\vu\in\vU$,
		\begin{align*}
			\mathcal{\hat{J}}'(\vu)= \alpha \vu -\vN^{T}\vp[\vx[\vu]].
		\end{align*}
	\end{Remark}
	At the stage of the error analysis, establishing the time regularity of the optimal control is essential. To this end, we present the following corollary. In the proof, the optimality condition \eqref{optimality condition} plays a key role in enhancing the time regularity of the optimal control.
	\begin{Corollary}[Time-regularity estimate]\label{Corollary 2.1} Let $\vu^*\in\vU_{ad}$ be a unique optimal control to $\mathbf{SLQ}$ problem. Then $\vu^*\in\vU_{ad}$ satisfies the following time regularity estimate
		\begin{align}\label{today10}
			\|\vu^*-\Pi_h\vu^*\|^2_{\mathbb{L}^2}\le\,C_{T,d}\,h \big(\|\vx_0\|^2+\|\vM_1\vx_0\|^2+ \|\sigma\|_{\mathbb{L}^2}^2+\|\vM_1\sigma\|_{\mathbb{L}^2}^2\big).
		\end{align}
	\end{Corollary}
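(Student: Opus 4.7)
The overarching strategy is to exploit the optimality characterization \eqref{optimality condition} to transfer the time regularity of the optimal control $\vu^*$ to the time regularity of the adjoint $\vp[\vx^*]$, which is already quantified in Corollary~\ref{Corrolary 2.1}. My first step would be to observe that since $\Pi_h$ samples at the nodes $t_n$, and the projection $\mathcal{P}_{ad}$ acts pointwise, the commutation $\Pi_h(\mathcal{P}_{ad}[\tfrac{1}{\alpha}\vN^T\vp[\vx^*]])=\mathcal{P}_{ad}[\tfrac{1}{\alpha}\vN^T[\Pi_h\vp[\vx^*]]]$ holds. Then, using that $\mathcal{P}_{ad}$ is $1$-Lipschitz (as the orthogonal projection onto a closed convex set), for every $s\in[t_n,t_{n+1})$ we get
\[
\|\vu^*(s)-[\Pi_h\vu^*](s)\|\le \tfrac{1}{\alpha}\|\vN\|\,\|\vp[\vx^*](s)-\vp[\vx^*](t_n)\|=\tfrac{1}{\alpha}\|\vN\|\,\|\vp[\vx^*](s)-[\Pi_h\vp[\vx^*]](s)\|.
\]
Squaring, integrating in time, and taking expectation yields $\|\vu^*-\Pi_h\vu^*\|_{\mathbb{L}^2}^2\le\tfrac{\|\vN\|^2}{\alpha^2}\|\vp[\vx^*]-\Pi_h\vp[\vx^*]\|_{\mathbb{L}^2}^2$, and Corollary~\ref{Corrolary 2.1} immediately converts this into
\[
\|\vu^*-\Pi_h\vu^*\|_{\mathbb{L}^2}^2 \le C_T\tfrac{\|\vN\|^2}{\alpha^2}\,h\,\bigl(\mathbb{E}[\|\vM_1\vD\vx^*(T)\|^2]+\|\vB\vx^*\|_{\mathbb{L}^2}^2\bigr).
\]

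Next I need to bound the two data-dependent quantities on the right by the initial datum and the noise. For the term $\|\vB\vx^*\|_{\mathbb{L}^2}^2$ I plug $\vu=\vu^*$ into the stability estimate \eqref{stability for forward SDE}, which gives $\sup_s\mathbb{E}[\|\vx^*(s)\|^2]\le C_T(\|\vx_0\|^2+\|\vN\|^2\|\vu^*\|_{\mathbb{L}^2}^2+\|\sigma\|_{\mathbb{L}^2}^2)$; using the a priori control bound \eqref{L^2 bound for optimal pair} to absorb $\|\vu^*\|_{\mathbb{L}^2}^2$ yields a clean $C_{T,d}(\|\vx_0\|^2+\|\sigma\|_{\mathbb{L}^2}^2)$. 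For $\mathbb{E}[\|\vM_1\vD\vx^*(T)\|^2]$ I pass the constant $\vD$ through (commuting with $\vM_1$ up to the matrix norm $\|\vD\|$) and then invoke the higher-order stability \eqref{today02}, which gives
\[
\sup_{s\in[0,T]}\mathbb{E}[\|\vM_1\vx^*(s)\|^2]\le C_T\bigl(\|\vM_1\vx_0\|^2+\|\vN\|^2\|\vu^*\|_{\mathbb{L}^2}^2+\|\vM_1\sigma\|_{\mathbb{L}^2}^2\bigr).
\]
Again absorbing $\|\vu^*\|_{\mathbb{L}^2}^2$ via \eqref{L^2 bound for optimal pair} produces $C_{T,d}(\|\vx_0\|^2+\|\vM_1\vx_0\|^2+\|\sigma\|_{\mathbb{L}^2}^2+\|\vM_1\sigma\|_{\mathbb{L}^2}^2)$, which is exactly the right-hand side needed.

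The main obstacle, in my judgment, is not the Lipschitz-based reduction (which is essentially immediate) but rather the proper invocation of the \emph{higher-regularity} stability estimate \eqref{today02} and the verification that all constants only depend on $\{T,\alpha,\|\vB\|,\|\vD\|,\|\vN\|\}$ in the sense of the remark on $C_{T,d}$. In particular, the appearance of $\|\vM_1\vx_0\|^2$ and $\|\vM_1\sigma\|_{\mathbb{L}^2}^2$ on the right-hand side of \eqref{today10} is dictated precisely by the $\vM_1$-weighted source terms in \eqref{today02}, so the bookkeeping has to be arranged so that these two terms arise naturally and are not inflated to $\|\vM_1\|^2\|\vx_0\|^2$ or similar. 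Finally, I would assemble everything, collect the constants, and conclude.
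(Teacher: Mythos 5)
Your proposal follows essentially the same route as the paper: reduce $\|\vu^*-\Pi_h\vu^*\|_{\mathbb{L}^2}$ to $\|\vp[\vx^*]-\Pi_h\vp[\vx^*]\|_{\mathbb{L}^2}$ via the optimality condition \eqref{optimality condition} and the pointwise Lipschitz property of $\mathcal{P}_{ad}$, invoke Corollary \ref{Corrolary 2.1}, and then close with the stability estimates \eqref{stability for forward SDE}, \eqref{today02} and the a priori bound \eqref{L^2 bound for optimal pair}. The argument is correct, and you are in fact slightly more careful than the paper in spelling out the commutation $\Pi_h\circ\mathcal{P}_{ad}=\mathcal{P}_{ad}\circ\Pi_h$ and the nonlinearity of the projection.
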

	\begin{proof}
		Starting from the optimality condition \eqref{optimality condition} and using the stability property of the projection operator $\mathcal{P}_{ad}$, we have
		\begin{align*}
			\|\vu^* - \Pi_h \vu^*\|_{\mathbb{L}^2}^2 &= \|\mathcal{P}_{ad} \bigg[\frac{1}{\alpha}\vN(\vp[\vx^*] - \Pi_h \vp[\vx^*])\bigg]\|_{\mathbb{L}^2}^2 \\
			&\le\frac{1}{\alpha} \|\vN (\vp[\vx^*] - \Pi_h \vp[\vx^*])\|_{\mathbb{L}^2}^2.
		\end{align*}
		Then, by applying inequality \eqref{today01}, we obtain
		\begin{align*}
			\|\vu^* - \Pi_h \vu^*\|_{\mathbb{L}^2}^2 &\le\frac{1}{\alpha} \|\vN\|^2 \|\vp[\vx^*] - \Pi_h \vp[\vx^*]\|_{\mathbb{L}^2}^2 \\
			&\le\frac{1}{\alpha} \|\vN\|^2 \, h C_{T} \big( \|\vM_1 \vD \vx^*(T)\|^2 + \|\vB \vx^*\|_{\mathbb{L}^2}^2 \big).
		\end{align*}
		Using inequality \eqref{today02}, we further obtain
		\begin{align*}
			\|\vu^* - \Pi_h \vu^*\|_{\mathbb{L}^2}^2 &\le C_{T, d} h \big( \|\vM_1 \vx^*(T)\|^2 + \|\vx^*\|_{\mathbb{L}^2}^2 \big) \\
			&\le C_{T, d} h \big( \|\vM_1 \vx_0\|^2 + \|\vN \vu^*\|_{\mathbb{L}^2}^2 + \|\vM_1 \sigma\|_{\mathbb{L}^2}^2 \big).
		\end{align*}
		Finally, applying the $\mathbb{L}^2$-bound for the optimal pair \eqref{L^2 bound for optimal pair}, we conclude that
		\begin{align*}
			\|\vu^* - \Pi_h \vu^*\|_{\mathbb{L}^2}^2 &\le C_{T, d} h \big( \|\vx_0\|^2 + \|\vM_1 \vx_0\|^2 + \|\sigma\|_{\mathbb{L}^2}^2 + \|\vM_1 \sigma\|_{\mathbb{L}^2}^2 \big).
		\end{align*}
	\end{proof}
	
	\begin{Remark}
		Note that by explicitly tracking the constant $C_{T, d}$ in the above proof, we obtain
		\begin{align*}
			C_{T, d} = C_{T} \|\vN\|^2 \max \{ \|\vD\|_{\infty}, \|\sqrt{\vB}\|^2, \|\vN\|^2 \}.
		\end{align*}
	\end{Remark}
	\section{Time discretization and an efficient {implementable} scheme}\label{section 3}
	For a given discrete control $\{\vu_n\}_{n=0}^{N-1} \equiv \vu_{h} \in \vU_{h}$, we approximate the state variable using the implicit Euler scheme. The resulting discrete state variable $\vx_{h} \equiv \{\vx_n\}_{n=0}^{N} \in \vX_{h}$ is governed by the following difference equation:
	\begin{align*}
		\vx_{n+1} &= \vx_n + h\left[\vM\vx_{n+1} + \vN\vu_n\right] + \vsigma(t_n) \Delta\vw_n, \qquad \vx_0 = \vx,
	\end{align*}
	where $\Delta\vw_n = \vw_{n+1} - \vw_n$ represents the increment of the Brownian motion over the time step $[t_n, t_{n+1}]$. Rearranging the above equation to solve for $\vx_{n+1}$, we obtain:
	\begin{align*}
		(\mathbf{I} - h\vM) \vx_{n+1} &= \vx_n + h\vN\vu_n + \vsigma(t_n)\Delta\vw_n,
	\end{align*}
	where $\mathbf{I}$ is the identity matrix. Finally, solving for $\vx_{h}\in\vX_{h}$ yields:
	\begin{align}\label{forward difference equations}
		\begin{cases}
			\vx_{n+1} = (\mathbf{I} - h\vM)^{-1} \left(\vx_n + h\vN\vu_n + \vsigma(t_n) \Delta\vw_n\right), &\forall n = 0, 1, \ldots, N-1, \\
			\vx_0 = \vx_0.
		\end{cases}
	\end{align}
	This provides the update rule for the discrete state variable at each time step.
	
	Next, consider a given discrete state variable $\vx_{h} \equiv \{\vx_n\}_{n=1}^{N} \in \vX_{h}$. Using the implicit Euler scheme, we approximate the adjoint variable $\vp_{h} \equiv \{\vp_n\}_{n=0}^{N}\in\vX_h$, which satisfies the following backward difference equations:
	\begin{align}\label{backward difference equations}
		\begin{cases}
			\vp_n = (\mathbf{I}-h\vM)^{-1}\mathbb{E}\left[\vp_{n+1} - \vB\vx_{n+1} \big| \mathbb{F}_{t_n}\right] & \forall\, n = N-1, \ldots, 0, \\
			\vp_N = -\vD\vx_N.
		\end{cases}
	\end{align}
	This equation provides the update rule for the adjoint variable, working backwards from the terminal condition at $n = N$.
	
	We also define the operator $\mathcal{T}_{h} : \vX_{h} \to \vX_{h}$, which maps the discrete state variable $\vx_{h}$ to the discrete adjoint variable $\vp_{h}$, such that
	\[
	\mathcal{T}_{h}[\vx_{h}] = \vp_{h}[\vx_{h}],
	\]
	where $\vp_{h}[\vx_{h}]$ denotes the unique solution to the system of equations given by \eqref{backward difference equations}.
	
	The operators $\mathcal{S}_{h}$ and $\mathcal{T}_{h}$ thus provide the mappings between the control and state variables, and between the state and adjoint variables, respectively, within the discretized time framework.  As a discrete variant of \eqref{random differential equation}, for any $\vu_h\in\vU_h$, $\vy_h[\vu_h]\in\vX_h$ denotes the unique solution to the following system of random difference equations
	\begin{align}\label{discrete random differential equation}
		\begin{cases}
			\vy_{n+1} = (\mathbf{I} - h\vM)^{-1} \left(\vy_n + h\vN\vu_n\right) &\forall n = 0, 1, \ldots, N-1, \\
			\vy_0 = 0.
		\end{cases}
	\end{align}  
	We define $\vx_h^\sigma=\vx_h[0]$. Note that $\vx_h[\vu_h]=\vy_h[\vu_h]+\vx_h^\sigma$ for all $\vu_h\in\vU_h$.
	We now define the operator $\mathcal{S}_{h} : \vU_{h} \to \vX_{h}$, which maps the discrete control $\vu_{h}$ to the discrete state $\vx_{h}$, such that
	\[
	\mathcal{S}_{h}(\vu_{h}) = \vy_{h}[\vu_{h}],
	\]
	where $\vy_{h}[\vu_{h}]$ denotes the unique solution to the system of equations given by \eqref{discrete random differential equation}. It is easy to verify
	\begin{align}\label{today 200}
		\|\mathcal{S}_{h}(\vu_{h})\|_{\vX_{h}}^2\le\,T\|\vN\vu_{h}\|^2_{\vU_{h}}.
	\end{align}
	\subsection{Discrete optimal control problem}
	Let us define the cost functional as follows:
	\begin{align*}
		\mathcal{J}_{h}(\vx_{h},\vu_{h}) := \frac{1}{2}\mathbb{E}\left[\int_{0}^{T} \left( \left<\vx_{h}(s), \vB\vx_h(s)\right>+ \alpha\left< \vu_{h}(s),\vu_{h}(s)\right> \right) \,{\rm d}s + \left<\vx_{h}(T),\vD \vx_{h}(T)\right>\right],
	\end{align*}
	where the goal is to find an optimal pair $(\vx_{h}^*, \vu_{h}^*)\in\vX_{h}\times\vU_{h}$ that satisfies
	\begin{align}\label{discrete cost function}
		\mathcal{J}_{h}(\vx_{h}^*, \vu_{h}^*) = \min_{(\vx_{h}, \vu_{h}) \in \vX_h\times\vU_{ad}^h} \mathcal{J}_{h}(\vx_{h}, \vu_{h}),
	\end{align}
	subject to the following forward difference equations:
	\begin{align} \label{forward_difference_equations_state}
		\begin{cases}
			\vx_{n+1} = (\mathbf{I} - h\vM)^{-1} \left(\vx_n + h\vN\vu_n + \vsigma(t_n) \Delta\vw_n\right) &\forall n = 0, 1, \ldots, N-1, \\
			\vx_0 = \vx_0.
		\end{cases}
	\end{align}
	We refer this discrete stochastic optimal control problem \eqref{discrete cost function}-\eqref{forward_difference_equations_state} as $\mathbf{SLQ}_{h}$ problem for short.
	\begin{Theorem}\label{discrete PMP}
		For each $h > 0$, there exists a unique discrete optimal pair $(\vx_{h}^*, \vu_{h}^*) \in \vX_h\times\vU_{ad}^h$ that minimizes $\mathbf{SLQ}_h$ problem such that
		\begin{align}\label{today11}
			\|\sqrt{B}\vx_h^*\|_{\mathbb{L}^2}^2+ \|\sqrt{\alpha}\vu_h^*\|_{\mathbb{L}^2}^2+\mathbb{E}\big[\|\sqrt{\vD}\vx_h^*(T)\|^2\big]\le C_T\big(\|\vx_0\|^2+ \|\sigma\|_{C_t\mathbb{L}^2_\omega}^2\big).
		\end{align} Moreover, the optimality condition is given by:
		\begin{align}\label{discrete optimality condition}
			\vu_{h}^* =\mathcal{P}_{ad}[ \frac{1}{\alpha}\vN^{T}\vp_{h}[\vx_{h}^*]],
		\end{align}
		where $\vp_{h}[\vx_{h}^*]$ is  the unique solution to the backward difference equations \eqref{backward difference equations}. Moreover, the following integral inequality holds: for all $\vu_h\in{\vU}_{ad}^h$
		\begin{align}\label{today07}
			\mathbb{E}\bigg[\int_0^T\left<\vy_h[\vu_h-\vu^*_h](s),\vB\vx_h^*(s)\right>+\alpha\left<\vu_h(s)-\vu^*_h,\vu_h^*(s)\right>{\rm d}s + \left<\vy_h[\vu_h-\vu^*_h](T),\vD\vx_h^*(T)\right>\bigg]\ge\,0.
		\end{align}
	\end{Theorem}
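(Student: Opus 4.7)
The strategy is to mirror the three-step proof of Theorem \ref{Lemma_optimality condition} in the fully discrete setting, replacing It\^o's formula with a discrete summation-by-parts identity. First I would use convex analysis in the Hilbert space $\vU_h$ to obtain existence, uniqueness, the abstract variational inequality, and an explicit formula for the Fr\'echet derivative of the reduced cost. The $\mathbb{L}^2$-bound \eqref{today11} will follow from a cost comparison against the admissible pair $(\vx_h^\sigma, 0)$. Finally, a discrete duality identity coupling \eqref{discrete random differential equation} and \eqref{backward difference equations} will rewrite the $\vy_h[\vv_h]$-dependent terms of the Fr\'echet derivative as an inner product with $\vN^T \vp_h[\vx_h^*]$, so that the projection characterization \eqref{discrete optimality condition} follows at once.

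\textbf{Step 1: existence, Fr\'echet derivative, $\mathbb{L}^2$-bound.} Set $\hat{\mathcal{J}}_h(\vu_h) := \mathcal{J}_h(\vx_h[\vu_h], \vu_h)$. Because $\vx_h[\vu_h] = \vy_h[\vu_h] + \vx_h^\sigma$ depends affinely on $\vu_h$ and $\alpha > 0$, $\hat{\mathcal{J}}_h$ is a strongly convex, continuous quadratic on $\vU_h$. Since $\vU_{ad}^h$ is nonempty, closed, and convex, the direct method yields a unique minimizer $\vu_h^* \in \vU_{ad}^h$, and the first-order condition gives $\langle\langle \hat{\mathcal{J}}_h'(\vu_h^*), \vu_h - \vu_h^*\rangle\rangle \ge 0$ for every $\vu_h \in \vU_{ad}^h$. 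A direct computation using the linearity of $\vu_h \mapsto \vy_h[\vu_h]$ yields
\begin{align*}
\langle\langle \hat{\mathcal{J}}_h'(\vu_h^*), \vv_h\rangle\rangle = \mathbb{E}\Bigl[\int_0^T\bigl(\langle \vy_h[\vv_h](s), \vB\vx_h^*(s)\rangle + \alpha\langle \vv_h(s), \vu_h^*(s)\rangle\bigr)\,\mathrm{d}s + \langle \vy_h[\vv_h](T), \vD\vx_h^*(T)\rangle\Bigr],
\end{align*}
from which \eqref{today07} follows by setting $\vv_h = \vu_h - \vu_h^*$. For \eqref{today11}, I would compare $\mathcal{J}_h(\vx_h^*, \vu_h^*) \le \mathcal{J}_h(\vx_h^\sigma, 0)$ and control the right-hand side by the standard discrete stability estimate for \eqref{forward_difference_equations_state} at $\vu_h \equiv 0$, using the elementary bound $h\sum_{n=0}^{N-1}\mathbb{E}\|\vsigma(t_n)\|^2 \le T\|\vsigma\|_{C_t\mathbb{L}_\omega^2}^2$.

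\textbf{Step 2: discrete duality and the optimality relation.} Since \eqref{discrete random differential equation} is noise-free and the controls are adapted, each $\vy_{n+1}$ is $\mathbb{F}_{t_n}$-measurable. Using this measurability, the symmetry of $\mathbf{I} - h\vM$ (which follows from $\vM = -\vM_1\vM_1^T$), and the adjoint relation $(\mathbf{I}-h\vM)\vp_n = \mathbb{E}[\vp_{n+1} - \vB\vx_{n+1}^*\mid\mathbb{F}_{t_n}]$, one conditions on $\mathbb{F}_{t_n}$ and cancels $(\mathbf{I}-h\vM)$ against its inverse to obtain
\begin{align*}
\mathbb{E}\bigl[\langle \vy_{n+1}, \vp_{n+1}\rangle - \langle \vy_n, \vp_n\rangle\bigr] = h\,\mathbb{E}[\langle \vN\vv_n, \vp_n\rangle] + \mathbb{E}[\langle \vy_{n+1}, \vB\vx_{n+1}^*\rangle].
\end{align*}
Summing over $n = 0,\ldots, N-1$ with $\vy_0 = 0$ and $\vp_N = -\vD\vx_N^*$ produces a discrete duality identity that matches the $\vy_h[\vv_h]$-dependent terms in the Fr\'echet derivative and rewrites them as $-\mathbb{E}\bigl[\int_0^T\langle \vv_h(s), \vN^T\vp_h[\vx_h^*](s)\rangle\,\mathrm{d}s\bigr]$. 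Inserting this identity into \eqref{today07} with $\vv_h = \vu_h - \vu_h^*$ reduces the variational inequality to
\begin{align*}
\mathbb{E}\Bigl[\int_0^T\bigl\langle \vu_h(s) - \vu_h^*(s),\; \alpha\vu_h^*(s) - \vN^T\vp_h[\vx_h^*](s)\bigr\rangle\,\mathrm{d}s\Bigr] \ge 0\qquad\forall\,\vu_h\in\vU_{ad}^h,
\end{align*}
and because $\vU_{ad}^h$ is the pointwise image of the box $\Pi_{i=1}^m[a_i,b_i]$, the standard projection argument (as in the final step of the proof of Theorem \ref{Lemma_optimality condition}) yields \eqref{discrete optimality condition}.

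\textbf{Main obstacle.} The delicate ingredient is the discrete duality identity. In contrast with the continuous proof, where It\^o's formula handles cross terms transparently and the martingale integral vanishes in expectation, in the discrete setting one has to keep careful track of which quantities are $\mathbb{F}_{t_n}$-measurable, exploit the implicit structure of $(\mathbf{I}-h\vM)\vp_n$ together with the symmetry of $\vM$ to cancel the inverse cleanly, and reconcile the one-step index shift between the summand $\langle\vy_{n+1},\vB\vx_{n+1}^*\rangle$ produced by the identity and the piecewise-constant integrand $\langle\vy_h[\vv_h],\vB\vx_h^*\rangle$ defining $\mathcal{J}_h$---the residual boundary term at $n=0$ is trivialized by $\vy_0 = 0$.
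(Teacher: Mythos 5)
Your proposal is correct and is precisely the argument the paper intends: the paper's proof of Theorem \ref{discrete PMP} is a one-line deferral to the proof of Theorem \ref{Lemma_optimality condition}, and your three steps (strong convexity of $\hat{\mathcal{J}}_h$ on the closed convex set $\vU_{ad}^h$ for existence, uniqueness and the variational inequality \eqref{today07}; cost comparison against $\vu_h\equiv 0$ for the bound \eqref{today11}; and a discrete summation-by-parts duality exploiting the symmetry of $\mathbf{I}-h\vM$ and the $\mathbb{F}_{t_n}$-measurability of $\vy_{n+1}$ in place of It\^o's formula) are exactly the discrete transcription of that proof. The one point to watch is that your duality identity yields the term $\mathbb{E}\big[\langle \vy_{n+1},\vB\vx_{n+1}^*\rangle\big]$ without a factor $h$ because you follow \eqref{backward difference equations} as literally written; to match the integral $\int_0^T\langle \vy_h,\vB\vx_h^*\rangle\,\mathrm{d}s$ one needs the consistent form $\vp_n=(\mathbf{I}-h\vM)^{-1}\mathbb{E}\big[\vp_{n+1}-h\vB\vx_{n+1}\mid\mathbb{F}_{t_n}\big]$ (which is what the recursive formula \eqref{recursive formula} implicitly uses), so this is a typo in the paper rather than a gap in your argument.
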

	
	\begin{proof}
		For the proof, one can follow similar lines as done in the proof of Theorem \ref{Lemma_optimality condition}. This is discrete variant of Theorem  \ref{Lemma_optimality condition}. We leave the details to the interested reader. 
	\end{proof}
	\begin{Remark}
		By following the same lines as in the  proof of Theorem \ref{Lemma_optimality condition}, we can compute the Fr\'echet derivative of reduced discrete cost functional $\mathcal{\hat{J}}_h(\vu_h)=\mathcal{J}(\vx_h[\vu_h],\vu_h)$ with the help of adjoint operator $\mathcal{T}_h$ as follows: for all $\vu_h\in\vU_h$,
		\begin{align*}
			\hat{\mathcal{J}}'(\vu_{h})=\alpha\vu_{h}- \vN^{T}\mathcal{T}_{h}[\vu_{h}].
		\end{align*}
		This identity will help to compute gradient iterates in the projected gradient descent method. 
	\end{Remark}
	\begin{Remark}[characterization of the optimal pair] By putting together the results in Theorem \eqref{discrete PMP} and \eqref{backward difference equations}, we arrive at the following optimality system.
		The optimal pair $(\vx_h^*, \vu_h^*)\in \vX_h\times\vU_{ad}^h$ is characterized by the following decoupled system of equations
		\begin{align}\label{characterization of the optimal pair}
			\begin{cases}
				\vx_{n+1}^* = (I - h\vM)^{-1} \left(\vx_n^* + h\vN\vu_n^* + \vsigma(t_n) \Delta\vw_n\right) &\forall\, n = 0, 1, \ldots, N-1, \\
				\vp_n =  (\mathbf{I}-h\vM)^{-1} \mathbb{E}\left[\vp_{n+1} - \vB \vx_{n+1}^*\big| \mathbb{F}_{t_n}\right] & \forall\,n = N-1, \ldots, 0, \\
				\vu_{h}^* =\mathcal{P}_{ad}[\frac{1}{\alpha} \vN^{T}\vp_{h}[\vx_{h}^*]],\\
				\vx^*_0 = \vx_0, \\
				\vp_N = -\vD\vx^*_N.
			\end{cases}
		\end{align}
	\end{Remark}
	\begin{Remark}
		The above system provides a complete characterization of the optimal pair in the discrete setting. The backward recursion for $\vp_n$ is particularly important as it determines the optimal control process through the optimality condition.
	\end{Remark}
	From this characterization \eqref{characterization of the optimal pair}, this is a decoupled system with conditional expectation whose direct computation is out of reach. To address this issue, we approximate to optimal pair $(\vx_{h}^*,\vu_{h}^*)$ by an {\em implementable} scheme in the context of a projected gradient descent method which accounts the control constraints as well.
	\subsection{Projected gradient descent method}
	By \eqref{characterization of the optimal pair}, solving minimization $\mathbf{SLQ}_{h}$ problem is equivalent to solving the system of {\em coupled} forward-backward difference equations \eqref{characterization of the optimal pair}. We may exploit the variational character of $\mathbf{SLQ}_{h}$ problem to apply the projected gradient descent method ($\mathbf{SLQ}_{h}^{{\rm grad}}$, for short) where approximate iterates of the optimal control $\vu^*_{h}$ in a convex set of the Hilbert space $\vU_{h}$ are obtained. 
	
	\begin{algorithm}[H]
		\caption{Projected gradient descent method $(\mathbf{SLQ}_{h}^{{\rm grad}})$}
		\label{tt}
		\begin{algorithmic}[1]
			\State \textbf{Input:} Fix $\vx_0\in\mathbb{R}^d$, initial guess $\vu_{h}^{(0)}\in \vU_{ad}^h$, and fix $\kappa>0$.
			\State \textbf{Iterates:} For any $\ell\in \mathbb{N}\cup\{0\}$;
			\State \textbf{State iterates:} Find the state iterates $\vx_{h}^{(\ell)}\in\vX_{h}$ as follows:
			\begin{align} 
				\begin{cases}
					\vx_{n+1}^{(\ell)} = (I - h\vM)^{-1} \left(\vx_n^{(\ell)} + h\vN\vu_n^{(\ell)} + \vsigma(t_n) \Delta\vw_n\right) &\forall n = 0, 1, \ldots, N-1, \\
					\vx_0^{(\ell)} = \vx_0.
				\end{cases}
			\end{align}
			\State \textbf{Adjoint iterates:} Find the adjoint iterates $\vp_{h}^{(\ell)}\in\vX_{h}$ as follows:
			\begin{align}\label{backward difference equations_iterate}
				\begin{cases}
					\vp_n^{(\ell)} = h\vM\vp_{n}^{(\ell)} + \mathbb{E}\left[\vp_{n+1}^{(\ell)} - \vB\vx_{n+1}^{(\ell)} \big| \mathbb{F}_{t_n}\right] & \forall n = N-1, \ldots, 0, \\
					\vp_N^{(\ell)} =- \vD \vx_N^{(\ell)}.
				\end{cases}
			\end{align}
			\State \textbf{Update iterates:}
			Update $\vu_{h}^{(\ell+1)}\in {\vU}_{ad}^h$ by the following formula:
			\begin{align}\label{control update}
				{\vu}_{h}^{(\ell+1)}=\mathcal{P}_{ad}[ \vu_{h}^{(\ell)}-\frac{1}{\kappa}( \alpha\vu_{h}^{(\ell)}-\vN^{T}\vp_{h}^{(\ell)})].
			\end{align}
			
		\end{algorithmic}
	\end{algorithm}
	
	At this form of projected gradient descent method is not {\em implementable} due to the presence of the conditional expectation. But in next proposition, we will see that this conditional expectation can be avoid by using the iterate nature of gradient descent method and the linearity of the state-adjoint equations, this strategy was before exploited in the case of the stochastic Dirichlet boundary optimal control problem \cite{Chaudhary}.  
	\subsection{Recursive formula}
	In this subsection, we derive a recursive formula for exact computation of conditional expectation. 
	\begin{Proposition}\label{Proposition recursive formula}
		Let $\vA_0=(\mathbf{I}-h\vM)^{-1}$, a deterministic constant $\vu_h^{(0)}\equiv \mathfrak{c}\in\Pi_{i=1}^{M}[a_i, b_i]$. For any $\ell\in\mathbb{N}$, in \eqref{backward difference equations_iterate}, the adjoint $\vp_n^{(\ell)}$ can be computed by the following recursive formula:
		\begin{align}\label{recursive formula}
			\vp_n^{(\ell)}&=-\vA_0^{N-n}\vD\vx_n^{(\ell)}-h\sum_{q=n+1}^{N}\vA_0^{q-n}\vB\vA_0^{q-n}\vx_n^{(\ell)}-h\vA_0^{N-n}\vD\sum_{r=n}^{N-1}\vA_0^{N-r}\vN \mathfrak{U}_{r,n}^{(\ell)}\notag\\&\qquad\qquad- h^2\sum_{q=n+1}^N\vA_0^{q-n}\vB \sum_{r=n}^{q-1}\vA_0^{q-r}\vN\mathfrak{U}_{r,n}^{(\ell)}.
		\end{align}
		where $\mathfrak{U}_{r,n}^{(\ell)}$ is defined by the following recursive formula: for all $\ell\in\mathbb{N}$, for all $r\in\{n+1,...,N-1\}$, $\mathbb{P}$-almost surely
		\begin{align}
			\mathfrak{U}^{(\ell)}_{r,n}=&\mathcal{P}_{ad}\bigg[\big(\mathbf{I}-\frac{\alpha}{\kappa}\big)\mathfrak{U}_{r,n}^{(\ell-1)}-\frac{1}{\kappa}\vN^{T}\vA_0^{N-r}\vD\bigg[\vA_0^{N-n}\vx_n^{(\ell-1)}+ h\sum_{m=n+1}^{N-1}\vA_0^{N-m}\vN\mathfrak{U}_{m,n}^{(\ell-1)}\bigg]\notag\\&\qquad-\frac{h}{\kappa}\vN^{T}\sum_{s=r+1}^{N}\vA_0^{s-r}\vB\bigg[\vA_0^{s-n}\vx_n^{(\ell-1)}+h\sum_{m=n+1}^{s-1}\vA_0^{s-m}\vN\mathfrak{U}_{m,n}^{(\ell-1)}\bigg]\bigg],
		\end{align}
		with $\mathfrak{U}_{r,n}^{(0)}=\mathfrak{c}\quad\forall\,r\in\{n+1,....,N-1\}$.
	\end{Proposition}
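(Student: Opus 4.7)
The plan is to unroll the implicit backward recursion \eqref{backward difference equations_iterate} in closed form, substitute the explicit solution of the forward difference equation, and use the tower property of conditional expectations so that every occurrence of $\mathbb{E}[\cdot\mid\mathbb{F}_{t_n}]$ reduces to an algebraic combination of $\vx_n^{(\ell)}$, powers of $\vA_0=(\mathbf{I}-h\vM)^{-1}$, and the conditional control family $\mathfrak{U}_{r,n}^{(\ell)}:=\mathbb{E}[\vu_r^{(\ell)}\mid\mathbb{F}_{t_n}]$. The recursion for $\mathfrak{U}_{r,n}^{(\ell)}$ then drops out by plugging the resulting closed form for $\vp_r^{(\ell-1)}$ into the projected-gradient update \eqref{control update}.

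Concretely, I would rewrite \eqref{backward difference equations_iterate} as $\vp_n^{(\ell)} = \vA_0\,\mathbb{E}[\vp_{n+1}^{(\ell)} - \vB\vx_{n+1}^{(\ell)}\mid\mathbb{F}_{t_n}]$, iterate it backward from $N$ using the tower property, and insert the terminal condition $\vp_N^{(\ell)} = -\vD\vx_N^{(\ell)}$ to obtain
\[
\vp_n^{(\ell)} = -\vA_0^{N-n}\vD\,\mathbb{E}[\vx_N^{(\ell)}\mid\mathbb{F}_{t_n}] - \sum_{q=n+1}^{N}\vA_0^{q-n}\vB\,\mathbb{E}[\vx_q^{(\ell)}\mid\mathbb{F}_{t_n}].
\]
Iterating \eqref{forward difference equations} from $t_n$ gives the explicit representation
\[
\vx_q^{(\ell)} = \vA_0^{q-n}\vx_n^{(\ell)} + h\sum_{r=n}^{q-1}\vA_0^{q-r}\vN\vu_r^{(\ell)} + \sum_{r=n}^{q-1}\vA_0^{q-r}\vsigma(t_r)\Delta\vw_r.
\]
Conditioning on $\mathbb{F}_{t_n}$ annihilates the stochastic integral, since each $\vsigma(t_r)$ is $\mathbb{F}_{t_r}$-adapted and $\Delta\vw_r$ has zero mean independently of $\mathbb{F}_{t_r}$, yielding $\mathbb{E}[\vx_q^{(\ell)}\mid\mathbb{F}_{t_n}] = \vA_0^{q-n}\vx_n^{(\ell)} + h\sum_{r=n}^{q-1}\vA_0^{q-r}\vN\,\mathfrak{U}_{r,n}^{(\ell)}$. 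Substituting into the unrolled adjoint expression and separating the $q=N$ contribution (paired with $-\vA_0^{N-n}\vD$) from the $\vB$-sum over $q=n+1,\ldots,N$ reproduces the four terms of \eqref{recursive formula}.

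For the second claim, I would start from $\vu_r^{(\ell)} = \mathcal{P}_{ad}[(1-\tfrac{\alpha}{\kappa})\vu_r^{(\ell-1)} + \tfrac{1}{\kappa}\vN^{T}\vp_r^{(\ell-1)}]$, substitute the closed form for $\vp_r^{(\ell-1)}$ derived above (with $n$ replaced by $r$), and re-anchor the embedded state expectations at $t_n$ via $\mathbb{E}[\vx_s^{(\ell-1)}\mid\mathbb{F}_{t_n}] = \vA_0^{s-n}\vx_n^{(\ell-1)} + h\sum_{m=n+1}^{s-1}\vA_0^{s-m}\vN\,\mathfrak{U}_{m,n}^{(\ell-1)}$ for $s\ge r \ge n$. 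This produces exactly the two bracketed expressions inside the projection in the stated recursion. The base case $\vu_h^{(0)}\equiv \mathfrak{c}$ is deterministic, giving $\mathfrak{U}_{r,n}^{(0)} = \mathfrak{c}$, and induction on $\ell$ propagates the identity.

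The main obstacle is the non-commutativity of the nonlinear projection $\mathcal{P}_{ad}$ with conditional expectation, since in general $\mathbb{E}[\mathcal{P}_{ad}[X]\mid\mathbb{F}_{t_n}] \neq \mathcal{P}_{ad}[\mathbb{E}[X\mid\mathbb{F}_{t_n}]]$. I would address this at the inductive step by exploiting the explicit forward--backward closed form to exhibit every future-noise contribution inside the argument of $\mathcal{P}_{ad}$ as a martingale increment indexed beyond $t_n$; those increments integrate to zero under $\mathbb{E}[\cdot\mid\mathbb{F}_{t_n}]$, leaving an $\mathbb{F}_{t_n}$-measurable core through which the projection passes. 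The remaining work is careful bookkeeping of the nested index ranges so that the $t_r$-anchored sums $\sum_{s=r+1}^{N}$ and $\sum_{m=r}^{s-1}$ arising from the adjoint formula at $r$ are realigned with the $t_n$-anchored sums $\sum_{s=r+1}^{N}$ and $\sum_{m=n+1}^{s-1}$ appearing in \eqref{recursive formula}; this reshuffling is purely algebraic.
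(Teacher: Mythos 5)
Your overall route is the same as the paper's: unroll the implicit backward recursion with the tower property, insert the explicit solution of the forward difference equation, kill the noise terms under $\mathbb{E}[\,\cdot\mid\mathbb{F}_{t_n}]$ using the additive structure (each $\vsigma(t_r)\Delta\vw_r$ with $r\ge n$ conditions to zero), set $\mathfrak{U}_{r,n}^{(\ell)}=\mathbb{E}[\vu_r^{(\ell)}\mid\mathbb{F}_{t_n}]$, and derive its recursion from the projected update \eqref{control update}. These are exactly the paper's Steps 1--4. One bookkeeping remark: unrolling \eqref{backward difference equations_iterate} literally gives $-\sum_{q=n+1}^{N}\vA_0^{q-n}\vB\,\mathbb{E}[\vx_q^{(\ell)}\mid\mathbb{F}_{t_n}]$ with no factor $h$, as you wrote, whereas \eqref{recursive formula} carries $h\sum_q$ and $h^2\sum_q\sum_r$; to land on the stated formula the scheme must be read with $h\vB\vx_{n+1}$ in the conditional expectation (the consistent Euler discretization of the drift $\vB\vx$), so you should fix that factor before the substitution step.

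The genuine gap is in your resolution of the obstacle you yourself identify, namely interchanging $\mathbb{E}[\,\cdot\mid\mathbb{F}_{t_n}]$ with the nonlinear projection $\mathcal{P}_{ad}$ when passing from $\vu_r^{(\ell)}=\mathcal{P}_{ad}[\cdots]$ to the recursion for $\mathfrak{U}_{r,n}^{(\ell)}$. Your argument is that the future-noise contributions inside the argument of $\mathcal{P}_{ad}$ are martingale increments beyond $t_n$ and therefore ``integrate to zero, leaving an $\mathbb{F}_{t_n}$-measurable core through which the projection passes.'' That reasoning is not valid: a zero-mean perturbation sitting \emph{inside} a clipping map does not vanish upon conditioning. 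With $m=1$, $[a_1,b_1]=[-1,1]$, a deterministic core $Y=1/2$ and $Z=\pm 10$ with probability $1/2$ each, independent of $\mathbb{F}_{t_n}$, one has $\mathbb{E}\big[\mathcal{P}_{ad}(Y+Z)\big]=0\neq 1/2=\mathcal{P}_{ad}\big(\mathbb{E}[Y+Z]\big)$. For $r>n$ and $\ell\ge 1$ the argument of $\mathcal{P}_{ad}$ in \eqref{control update} is only $\mathbb{F}_{t_r}$-measurable (it involves $\vx_r^{(\ell-1)},\ldots,\vx_N^{(\ell-1)}$ through $\vp_r^{(\ell-1)}$), so the interchange is a real issue, and your martingale argument does not close it; under your definition $\mathfrak{U}_{r,n}^{(\ell)}=\mathbb{E}[\vu_r^{(\ell)}\mid\mathbb{F}_{t_n}]$ the stated recursion would instead require $\mathbb{E}[\mathcal{P}_{ad}[X]\mid\mathbb{F}_{t_n}]=\mathcal{P}_{ad}[\mathbb{E}[X\mid\mathbb{F}_{t_n}]]$, which fails in general. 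You should know that the paper's own Step 3 performs precisely this interchange silently, so you have correctly located the weakest link of the published argument --- but the repair you sketch does not supply the missing justification. In the free-control case $\mathcal{P}_{ad}=\mathrm{id}$ everything is linear and both your argument and the paper's are complete.
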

	\begin{proof} We give complete the proof in the several steps.
		
		\noindent
		\textbf{Step 1.} By using the tower property of conditional expectation and linearity of adjoint iterate \eqref{backward difference equations_iterate}, for any $n\in\{1,...,N-1\}$,
		\begin{align*}
			\vp_n^{(\ell)}=-\mathbb{E}\big[A_0^{N-n}\vD\vx_N^{(\ell)} +h\sum_{q=n+1}^{N}\vA_0^{q-n}\vB\vx_q^{(\ell)}\Big|\mathbb{F}_{t_n}\big].
		\end{align*}
		\textbf{Step 2.} By making use of the state iterate \eqref{forward_difference_equations_state} and additive nature of noise coefficients, we can conclude that for all $q>n$,
		\begin{align*}
			\vx_q^{(\ell)}=\vA_0^{q-n}\vx_n^{(\ell)}+h\sum_{r=n}^{q-1}\vA_0^{q-r}[\vN \vu_r^{(\ell)}+\vsigma_r\Delta_{r}\vw].
		\end{align*}
		It implies that
		\begin{align*}
			\mathbb{E}\big[\vx_q^{(\ell)}\big|\mathbb{F}_n\big]=\vA_0^{q-n}\vx_n^{(\ell)}+ h\sum_{r=n}^{q-1}\vA_0^{q-r}\vN\mathfrak{U}_{r,n}^{(\ell)},
		\end{align*}
		where $\mathfrak{U}_{r,n}^{(\ell)}=\mathbb{E}[\vu_r^{(\ell)}\big|\mathbb{F}_n]$.
		
		\noindent
		\textbf{Step 3.} With the help of the control iterate \eqref{control update}, we can conclude that
		\begin{align*}
			\mathfrak{U}^{(\ell)}_{r,n}=\mathcal{P}_{ad}\bigg[(1-\frac{\alpha}{\kappa})\mathfrak{U}_{r,n}^{(\ell-1)}-\frac{1}{\kappa}\vN^{T}\mathbb{E}\bigg[\vA_0^{N-r}\vD\vx_N^{(\ell-1)}+h\sum_{s=r+1}^N\vA_0^{s-r}\vB\vx_s^{(\ell-1)}\Big|\mathbb{F}_n\bigg]\bigg].
		\end{align*}
		Analogously to Step 2, we know that for all $s\in\{n+2,...,N\}$, $\mathbb{P}$-almost surely,
		\begin{align*}
			\mathbb{E}\big[\vx_{s}^{(\ell-1)}\big|\mathbb{F}_n\big]=\vA_0^{s-n}\vx_n^{(\ell-1)}+ h\sum_{m=n+1}^{s-1}\vA_0^{s-m}\vN\mathfrak{U}_{m,n}^{(\ell-1)}.
		\end{align*}
		From the above identity, we conclude that
		\begin{align*}
			\mathfrak{U}^{(\ell)}_{r,n}=&\mathcal{P}_{ad}\bigg[\big({1}-\frac{\alpha}{\kappa}\big)\mathfrak{U}_{r,n}^{(\ell-1)}-\frac{1}{\kappa}\vN^{T}\vA_0^{N-r}\vD\bigg[\vA_0^{N-n}\vx_n^{(\ell-1)}+ h\sum_{m=n+1}^{N-1}\vA_0^{N-m}\vN\mathfrak{U}_{m,n}^{(\ell-1)}\bigg]\\&\qquad-\frac{h}{\kappa}\vN^{T}\sum_{s=r+1}^{N}\vA_0^{s-r}\vB\bigg[\vA_0^{s-n}\vx_n^{(\ell-1)}+h\sum_{m=n+1}^{s-1}\vA_0^{s-m}\vN\mathfrak{U}_{m,n}^{(\ell-1)}\bigg]\bigg],
		\end{align*}
		and
		\begin{align*}
			\mathfrak{U}_{r,n}^{(\ell)}=\mathfrak{c}\qquad	\forall\,r\in\{n+1,...,N-1\}.
		\end{align*}
		\textbf{Step 4.} With the information of Step 1 and Step 2, we can conclude that $\mathbb{P}$-almost surely,
		\begin{align*}
			\vp_n^{(\ell)}&=-\vA_0^{N-n}\vD\bigg[\vx_n^{(\ell)}+ h\sum_{r=n}^{N-1}\vA_0^{N-r}\vN\mathfrak{U}_{r,n}^{(\ell)}\bigg]-h\sum_{q=n+1}^N\vA_0^{q-n}\vB\bigg[\vA_0^{q-n}\vx_n^{(\ell)}+ h\sum_{r=n}^{q-1}\vA_0^{q-r}\vN\mathfrak{U}_{r,n}^{(\ell)}\bigg]\\&=-\vA_0^{N-n}\vD\vx_n^{(\ell)}-h\sum_{q=n+1}^{N}\vA_0^{q-n}\vB\vA_0^{q-n}\vx_n^{(\ell)}-h\vA_0^{N-n}\vD\sum_{r=n}^{N-1}\vA_0^{N-r}\vN \mathfrak{U}_{r,n}^{(\ell)}\\&\qquad\qquad- h^2\sum_{q=n+1}^N\vA_0^{q-n}\vB \sum_{r=n}^{q-1}\vA_0^{q-r}\vN\mathfrak{U}_{r,n}^{(\ell)}.
		\end{align*}
		This completes the proof.
	\end{proof}
	\newpage
	\subsection{An efficient {\em implementable} scheme}
	By using the recursive formula \eqref{recursive formula}, we can rewrite \eqref{backward difference equations_iterate} of Algorithm \ref{tt} to obtain an {\em implementable} scheme in Algorithm \ref{Algorithm_implementable}.
	
	{
		\begin{algorithm}[H]
			\caption{Implementable Algorithm}
			\label{Algorithm_implementable}
			\begin{algorithmic}[1]
				\State \textbf{Input:} Fix parameter $\vx_0\in\mathbb{R}^d$, $h>0$, initial guess $\vu_{h}^{(0)}\equiv \mathfrak{c}\in \Pi_{i=1}^m[a_i,b_i]$, $\kappa>0$.
				\State \textbf{Iterates:} For any $\ell\in \mathbb{N}\cup\{0\}$;
				\State \textbf{State iterates:} Find the state iterates $\vx_{h}^{(\ell)}\in\vX_{h}$ as follows:
				\begin{align*} 
					\begin{cases}
						\vx_{n+1}^{(\ell)} = (I - h\vM)^{-1} \left(\vx_n^{(\ell)} + h\vN\vu_n^{(\ell)} + \vsigma(t_n) \Delta\vw_n\right), &\forall n = 0, 1, \ldots, N-1, \\
						\vx_0^{(\ell)} = \vx_0.
					\end{cases}
				\end{align*}
				\State \textbf{Recursive formula}: $\mathfrak{U}_{r,n}^{(\ell)}$ is defined by recursive formula: for all $\ell\in\mathbb{N}$, for all $r\in\{n+1,...,N-1\}$, $\mathbb{P}$-almost surely
				\begin{align*}
					\mathfrak{U}^{(\ell)}_{r,n}=&\mathcal{P}_{ad}\bigg[\big(1-\frac{\alpha}{\kappa}\big)\mathfrak{U}_{r,n}^{(\ell-1)}-\frac{2}{\kappa}\vN^{T}\vA_0\vD\bigg[\vA_0^{N-n}\vx_n^{(\ell-1)}+ h\sum_{m=n+1}^{N-1}\vA_0^{N-m}\vN\mathfrak{U}_{r,n}^{(\ell-1)}\bigg]\\&\qquad+\frac{1}{\kappa}\sum_{s=r+1}^{N}\vA_0^{s-r}\bigg[\vA_0^{s-n}\vx_n^{(\ell-1)}+h\sum_{m=n+1}^{s-1}\vA_0^{s-m}\vN\mathfrak{U}_{m,n}^{(\ell-1)}\bigg]\bigg],
				\end{align*}
				with $\mathfrak{U}_{r,n}^{(0)}=\mathfrak{c}\qquad\forall\,r\in\{n+1,....,N-1\}$.
				
				\State\textbf{Adjoint iterates:} Find the adjoint iterates $\vp_{h}^{(\ell)}\in\vX_{h}$ as follows:
				\begin{align*}
					\vp_n^{(\ell)}&=-\vA_0^{N-n}\vD\vx_n^{(\ell)}-h\sum_{q=n+1}^{N}\vA_0^{q-n}\vB\vA_0^{q-n}\vx_n^{(\ell)}-h\vA_0^{N-n}\vD\sum_{r=n}^{N-1}\vA_0^{N-r}\vN \mathfrak{U}_{r,n}^{(\ell)}\notag\\&\qquad\qquad- h^2\sum_{q=n+1}^N\vA_0^{q-n}\vB \sum_{r=n}^{q-1}\vA_0^{q-r}\vN\mathfrak{U}_{r,n}^{(\ell)}.
				\end{align*}	
				\State\textbf{Update iterates:}
				Update $\vu_{h}^{(\ell+1)}\in {\vU}_{ad}^h$ by the following formula:
				\begin{align*}
					{\vu}_{h}^{(\ell+1)}=\mathcal{P}_{ad}[ \vu_{h}^{(\ell)}-\frac{1}{\kappa}(\alpha \vu_{h}^{(\ell)}-\vN^{T}\vp_{h}^{(\ell)})].
				\end{align*}
			\end{algorithmic}
	\end{algorithm}}
	\subsection{Numerical example: simulation of solution iterates}\label{example} In this example, we present a numerical simulation of {\em Algorithm \ref{Algorithm_implementable}} with a relatively high dimension, \( d = 10 \). While the algorithm can be implemented for any higher dimension. We have chosen \( d = 10 \) to ensure clarity in the plots and clearly interpretation of the results.
	Let \(\vB = \vD = \mathbf{I}\) (identity matrix), and \(\vM\), \(\vN\) be randomly generated fixed matrices. Define the noise level as 
	\[
	\sigma(t_n) = 0.5 \sin\left(\frac{nT\pi}{N}\right) \sigma_{1,n}, \quad n = 0,1, \ldots, N-1,
	\]
	where \(\sigma_{1,n}\) is a randomly generated fixed matrix for all $n=0,1,...,N-1$. Set parameters: \(T = 0.4\), \(L = 10\), \(N = 20\), \(\kappa = 0.45\), \(\alpha = 0.04\), $d=10$, $m=k=4$, $\mathtt{M}=1000$, $\mathfrak{c}=0$. The constraints are fixed as \(a_i = -2\) and \(b_i = 2\) for all \(1 \leq i \leq d\).
	
	\begin{figure}[H]
		\centering
		\begin{subfigure}{0.45\textwidth}
			\centering
			\includegraphics[width=\linewidth]{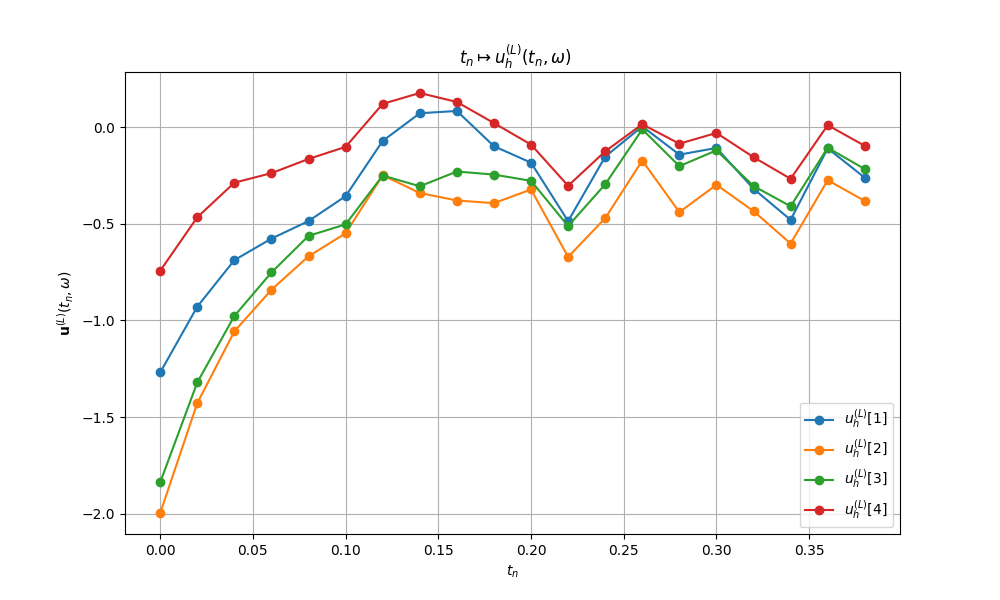}
			\caption{$t_n\to \vu_h^{(L)}(t_n,\omega)$}
			\label{fig:image1}
		\end{subfigure}
		\hfill
		\begin{subfigure}{0.45\textwidth}
			\centering
			\includegraphics[width=\linewidth]{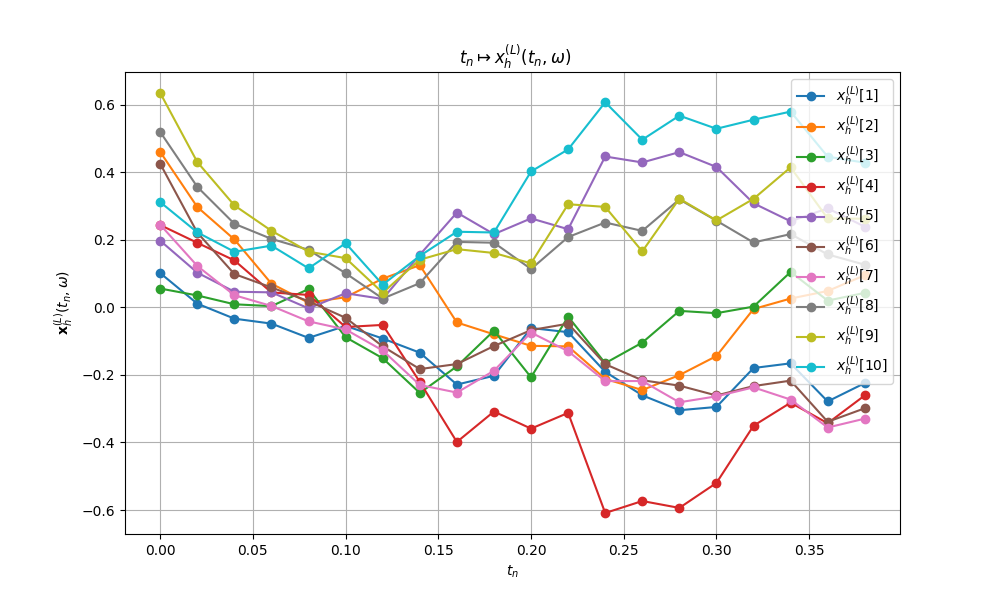}
			\caption{$t_n\to\vx_h^{(L)}(t_n,\omega)$}
			\label{fig:image2}
		\end{subfigure}
		\caption{\textbf{(a)} and \textbf{(b)} display the plots of the control iterate \(\vu_h^{(L)}\) and state iterate \(\vx_h^{(L)}\), respectively, in the case of {\em control constraint}.}
	\end{figure}
	\begin{figure}[H]
		\begin{subfigure}{0.45\textwidth}
			\centering
			\includegraphics[width=\linewidth]{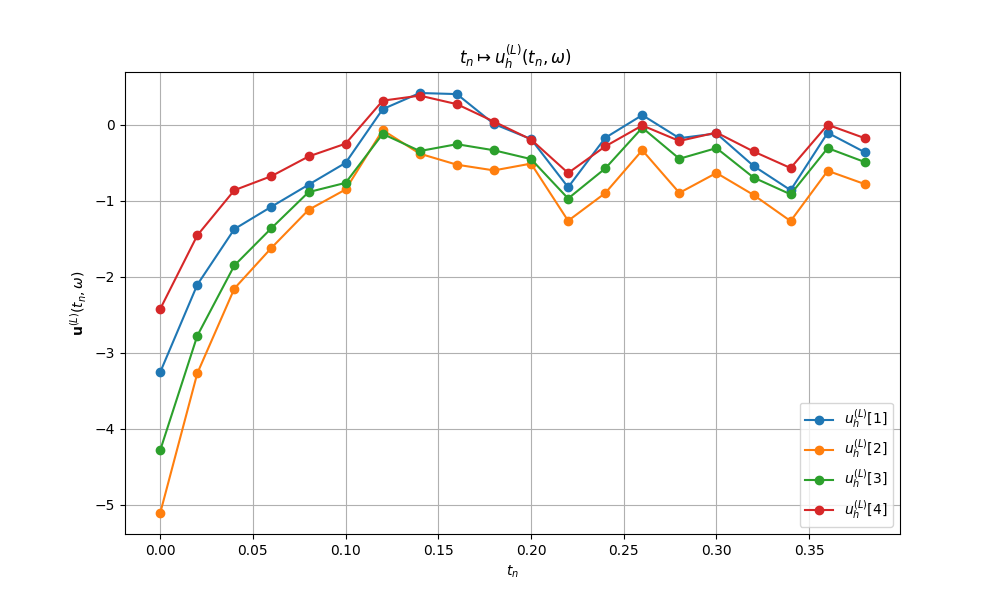}
			\caption{$t_n\to \vu_h^{(L)}(t_n,\omega)$}
			\label{fig:image3}
		\end{subfigure}
		\hfill
		\begin{subfigure}{0.45\textwidth}
			\centering
			\includegraphics[width=\linewidth]{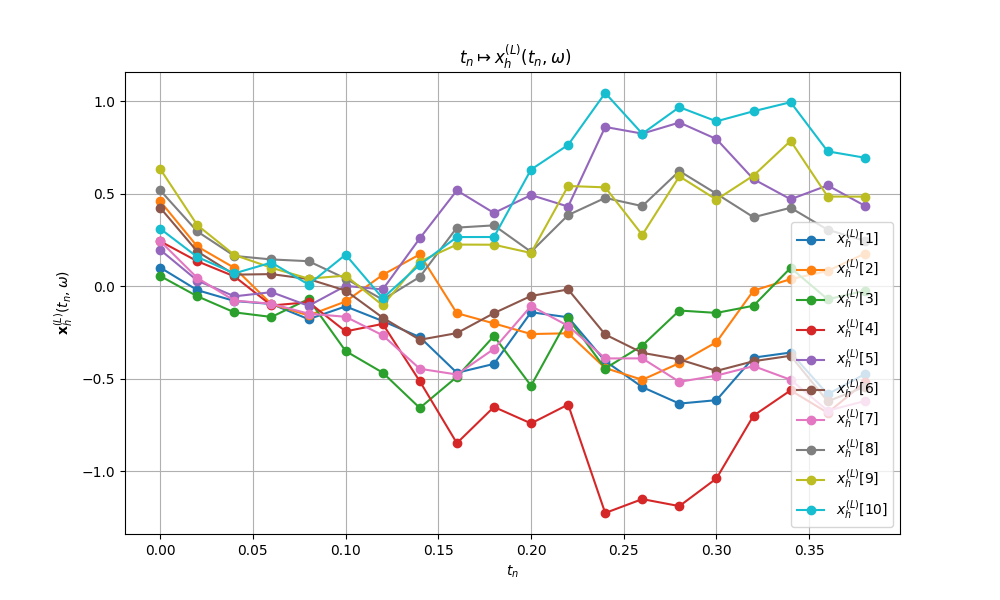}
			\caption{$t_n\to\vx_h^{(L)}(t_n,\omega)$}
			\label{fig:image4}
		\end{subfigure}
		\caption{\textbf{(a)} and \textbf{(b)} present the plots of the control iterate \(\vu_h^{(L)}\) and state iterate \(\vx_h^{(L)}\), respectively, in the case of free control.}
	\end{figure}
	
	\section{Error analysis}\label{section 4}
	In this subsection, we discuss the error analysis related to time discretization. Our focus is on estimating the errors introduced when approximating the continuous-time control and state trajectories using their discrete counterparts. 
	
	\subsection{Error bounds for state and adjoint differential equations}
	We begin by stating a result that provides an error bound for the state stochastic differential equation (SDE) when discretized.
	
	\begin{Proposition}\label{Corollary 4.1}
		Let $\vu_h \in \vU_h$. Let $\vy[\vu_h]$ and $\vy_h[\vu_h]$ be the unique solutions to \eqref{random differential equation} and \eqref{discrete random differential equation}, respectively. Then the following error bounds hold:
		\begin{align}\label{today04}
			\sup_{0\le\,n\le N}\mathbb{E}\big[\|\vy[\vu_h](t_n) - \vy_{h}[\vu_h](t_n)\|^2\big] \le C_{T,d} h^2\big\|\vu_h\|_{\mathbb{L}^2}^2,
		\end{align}
		and
		\begin{align}\label{today05}
			\|\vy[\vu_h]-\vy_h[\vu_h]\|_{\mathbb{L}^2}^2\le\,C_{T,d}h^2\|\vu_{h}\|_{\mathbb{L}^2}^2.
		\end{align}
	\end{Proposition}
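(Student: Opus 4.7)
The plan is to estimate the nodal error $e_n := \vy[\vu_h](t_n) - \vy_h[\vu_h](t_n)$ by a pathwise energy argument exploiting the dissipativity structure $\vM = -\vM_1\vM_1^T$, and then lift that bound to the full $\mathbb{L}^2$ norm.

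First I would integrate \eqref{random differential equation} over $[t_n, t_{n+1}]$ (using that $\vu_h \equiv \vu_n$ there), subtract the implicit-Euler step \eqref{discrete random differential equation}, and add and subtract $h\vM\vy(t_{n+1})$ to obtain the recursive identity
\begin{align*}
e_{n+1} - e_n = h\vM e_{n+1} + \int_{t_n}^{t_{n+1}} \vM\bigl[\vy(s) - \vy(t_{n+1})\bigr]\,\mathrm{d}s, \qquad e_0 = 0.
\end{align*}
Testing pathwise against $e_{n+1}$, the identity $\langle e_{n+1} - e_n, e_{n+1}\rangle \ge \tfrac{1}{2}(\|e_{n+1}\|^2 - \|e_n\|^2)$ together with $\langle \vM v, v\rangle = -\|\vM_1^T v\|^2$ produces an $h\|\vM_1^T e_{n+1}\|^2$ dissipation term on the left. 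Rewriting the consistency integral via $\vM = -\vM_1\vM_1^T$ and applying Cauchy--Schwarz and Young's inequality bounds it by $\tfrac{h}{2}\|\vM_1^T e_{n+1}\|^2$ (absorbed into the dissipation) plus $\tfrac{1}{2}\int_{t_n}^{t_{n+1}}\|\vM_1^T(\vy(s) - \vy(t_{n+1}))\|^2\,\mathrm{d}s$. Summing in $n$, taking expectation, and invoking the time-regularity estimate from Lemma~\ref{time regularity estimates for forward SPDE} on the remaining integral gives $\sup_{0 \le n \le N}\mathbb{E}\|e_n\|^2 \le C_T h^2 \|\vM_1\vN\vu_h\|_{\mathbb{L}^2}^2$, and estimating $\|\vM_1\vN\vu_h\|^2 \le \|\vM_1\vN\|^2\|\vu_h\|^2$ and absorbing the matrix norm into $C_{T,d}$ yields \eqref{today04}.

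For \eqref{today05} I would split $\vy(s) - \vy_h(s) = (\vy(s) - \vy(t_n)) + e_n$ on each subinterval $[t_n, t_{n+1})$. The first piece is controlled by a continuous-time regularity bound analogous to Lemma~\ref{time regularity estimates for forward SPDE} but for $\vy$ itself rather than $\vM_1\vy$, using \eqref{stability for forward SDE_1}--\eqref{gradient stability for forward SDE_1} to bound $\|\vy_t\|_{\mathbb{L}^2}^2 \le C_T(\|\vM\vy\|_{\mathbb{L}^2}^2 + \|\vN\vu_h\|_{\mathbb{L}^2}^2)$; the second piece follows immediately from \eqref{today04} after multiplying by $h$ and summing. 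The main obstacle will be keeping the constant $C_{T,d}$ genuinely independent of $\|\vM_1\|$ as the paper's convention advertises: Lemma~\ref{time regularity estimates for forward SPDE} naturally produces a $\|\vM_1\vN\vu_h\|_{\mathbb{L}^2}$ factor on the right-hand side that does not cleanly reduce to $\|\vu_h\|_{\mathbb{L}^2}$, so one must either absorb the $\|\vM_1\|$ dependence into $C_{T,d}$ (the simplest route, consistent with a fixed-$d$ framework) or else develop a discrete maximal-regularity argument on both the continuous and discrete sides that tracks only $\|\vN\vu_h\|_{\mathbb{L}^2}$ throughout.
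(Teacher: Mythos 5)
Your proposal follows essentially the same route as the paper: the same energy identity for $e_n = \vy(t_n)-\vy_n$ tested against $e_{n+1}$, the same use of the dissipativity $\vM=-\vM_1\vM_1^{T}$ to absorb half the $h\|\vM_1 e_{n+1}\|^2$ term via Young's inequality, the same invocation of the time-regularity estimate \eqref{lll}, and the same reduction of \eqref{today05} to \eqref{today04} plus a bound on $\|\vy_t\|_{\mathbb{L}^2}$. Your closing observation about the $\|\vM_1\vN\|$ factor hiding in the constant is a fair point that the paper's own proof glosses over in the final line, but it does not change the argument.
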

	\begin{proof}
		The proof is standard. For completeness of the proof, we give details here. For simplicity, we denote \(\vy[\vu_h] \equiv \vy\). Define the error term for all \(0 \leq n \leq N\) as
		\[
		e_n = \vy(t_n) - \vy_n.
		\]
		Then, we have \(\mathbb{P}\)-a.s., for all \(0 \leq n \leq N-1\),
		\[
		e_{n+1} = e_n + \int_{t_n}^{t_{n+1}} \vM (\vy(s) - \vy_{n+1}) \, \mathrm{d}s.
		\]
		Taking the inner product with \(e_{n+1}\), we obtain
		\[
		\left\langle e_{n+1} - e_n, e_{n+1} \right\rangle = -\int_{t_n}^{t_{n+1}} \left\langle \vM_{1}(\vy(s) - \vy(t_{n+1})), \vM_1 e_{n+1} \right\rangle \, \mathrm{d}s - h \|\vM_1 e_{n+1}\|^{2}.
		\]
		Applying the Cauchy-Schwarz inequality and Young’s inequality to the first term on the right-hand side, we have
		\[
		\left\langle e_{n+1} - e_n, e_{n+1} \right\rangle \leq -\frac{1}{2} h \|\vM_1 e_{n+1}\|^2 + \frac{1}{2} \int_{t_n}^{t_{n+1}} \|\vM_1(\vy(s) - \vy(t_{n+1}))\|^2 \, \mathrm{d}s.
		\]
		This implies
		\[
		\mathbb{E} \big[\|e_{n+1}\|^2 \big] + \mathbb{E}\big[\|e_n - e_{n+1}\|^2\big] + \frac{h}{2} \mathbb{E} \big[\|\vM_1 e_{n+1}\|^2 \big] \leq \mathbb{E} \big[\|e_n\|^2 \big] + \frac{1}{2} \mathbb{E} \bigg[\int_{t_n}^{t_{n+1}} \|\vM_1(\vy(s) - \vy(t_{n+1}))\|^2 \, \mathrm{d}s \bigg].
		\]
		By summing over \(n\) from \(0\) to \(N-1\), we obtain, for all \(0 \leq n \leq N\),
		\[
		\mathbb{E} \big[\|e_n\|^2\big] + \frac{h}{2} \sum_{k=1}^{n} \mathbb{E} \big[\|\vM_1 e_k\|^2 \big] \leq \mathbb{E} \big[\|e_0\|^2\big] + \frac{1}{2} \sum_{k=0}^{n-1} \mathbb{E} \bigg[\int_{t_k}^{t_{k+1}} \|\vM_1(\vy(s) - \vy(t_{k+1}))\|^2 \, \mathrm{d}s \bigg].
		\]
		Using the time regularity estimate \eqref{lll}, we obtain
		\[
		\sup_{1 \leq n \leq N} \mathbb{E} \big[\|e_n\|^2 \big] + \frac{h}{2} \sum_{n=1}^{N} \mathbb{E} \big[\|\vM_1 e_n\|^2 \big] \leq C_{T,d} h^2 \|\vu_h\|_{\mathbb{L}^2}^2.
		\]
		This yields the estimate \eqref{today04}. By applying the estimate \eqref{today04} and noting that \(\|\vy_{t}\|_{\mathbb{L}^2}^2 \le C_{T} \|\vu_h\|^2_{\mathbb{L}^2}\), we can similarly derive the estimate \eqref{today05}.
	\end{proof}
	
	\begin{Proposition}
		Let $\vx^\sigma$ be the unique solution to \eqref{sigmasde}. Let $\sigma$ satisfies the following estimate:
		\begin{align}\label{today08}
			\|\sigma-\Pi_h\sigma\|_{\mathbb{L}^2}^2\le\,C_T\,h.
		\end{align} Then the following error bound holds:
		\begin{align}\label{today09}
			\sup_{0\le n\le N}\|\vx^\sigma(t_n)-\vx^\sigma_h(t_n)\|_{\mathbb{L}^2_\omega}^2 +\|\vx^\sigma-\vx_h^\sigma\|_{\mathbb{L}^2}^2\le\,C_T\,h(\|\vM\vx_0\|^2+\|\vM \sigma\|_{\mathbb{L}^2}^2).
		\end{align}
	\end{Proposition}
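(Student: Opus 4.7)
The strategy is a direct energy estimate for the error $e_n := \vx^\sigma(t_n) - \vx_n^\sigma$, paralleling the proof of Proposition \ref{Corollary 4.1} but with the extra stochastic integral in the one-step residual handled by It\^o's isometry. First, subtracting the integrated SDE \eqref{sigmasde} on $[t_n,t_{n+1}]$ from the scheme definition of $\vx_h^\sigma$ gives
\begin{align*}
e_{n+1}-e_n = \int_{t_n}^{t_{n+1}} \vM\bigl(\vx^\sigma(s)-\vx^\sigma(t_{n+1})\bigr)\,\mathrm{d}s + h\vM e_{n+1} + \int_{t_n}^{t_{n+1}}\bigl(\sigma(s)-\sigma(t_n)\bigr)\,\mathrm{d}\vw(s),
\end{align*}
with $e_0=0$. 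Pairing with $e_{n+1}$ and using $2\langle a-b,a\rangle = \|a\|^2-\|b\|^2+\|a-b\|^2$ together with the dissipation identity $\langle \vM v,v\rangle = -\|\vM_1^{T} v\|^2$ produces a coercive term $-h\|\vM_1^{T} e_{n+1}\|^2$ on the left; the drift defect is absorbed via Cauchy--Schwarz and Young's inequality as in Proposition \ref{Corollary 4.1}.

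The second step is to take expectations and handle the martingale increment $M_n := \int_{t_n}^{t_{n+1}}(\sigma(s)-\sigma(t_n))\,\mathrm{d}\vw(s)$. Since $e_n$ is $\mathbb{F}_{t_n}$-measurable and $M_n$ is a centred $\mathbb{F}_{t_n}$-independent increment, $\mathbb{E}\langle M_n, e_n\rangle=0$, so only $\mathbb{E}\langle M_n, e_{n+1}-e_n\rangle$ survives. This is controlled by Young's inequality and It\^o's isometry by $\tfrac{1}{2}\mathbb{E}\|e_{n+1}-e_n\|^2 + \tfrac{1}{2}\int_{t_n}^{t_{n+1}}\mathbb{E}\|\sigma(s)-\sigma(t_n)\|^2\,\mathrm{d}s$, and the second summand adds up to $\|\sigma-\Pi_h\sigma\|_{\mathbb{L}^2}^2 \le C_T h$ by hypothesis \eqref{today08}. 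Summing from $n=0$ to $n=k-1$ for any $k\le N$ then yields $\sup_{0\le n\le N}\mathbb{E}\|e_n\|^2 + h\sum_{n=1}^N\mathbb{E}\|\vM_1^{T} e_n\|^2$ bounded by the drift time-regularity sum plus $C_T h$.

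The main technical step, and the principal obstacle, is the time-regularity estimate for the drift defect $\sum_n\int_{t_n}^{t_{n+1}}\mathbb{E}\|\vM_1(\vx^\sigma(s)-\vx^\sigma(t_{n+1}))\|^2\,\mathrm{d}s$. Following the pattern of Lemma \ref{time regularity estimates for forward SPDE}, I apply $\vM_1$ to the representation $\vx^\sigma(t_{n+1})-\vx^\sigma(s)=\int_s^{t_{n+1}}\vM\vx^\sigma(r)\,\mathrm{d}r + \int_s^{t_{n+1}}\sigma(r)\,\mathrm{d}\vw(r)$, use Cauchy--Schwarz on the drift part and It\^o's isometry on the stochastic part, and then invoke the a~priori bounds \eqref{today02}--\eqref{gradient stability for forward SDE} of Lemma \ref{Lemma 2.1} specialised to $\vu=0$, which control $\|\vM_1\vM\vx^\sigma\|_{\mathbb{L}^2}^2$ by $C_T(\|\vM\vx_0\|^2+\|\vM\sigma\|_{\mathbb{L}^2}^2)$. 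The drift contribution is then $O(h^2)$ and the stochastic contribution is $O(h)$, together giving the desired $C_T h(\|\vM\vx_0\|^2+\|\vM\sigma\|_{\mathbb{L}^2}^2)$ bound. Finally, the $\mathbb{L}^2$ piece of \eqref{today09} follows by splitting $\|\vx^\sigma(s)-\vx_{n+1}^\sigma\|^2\le 2\|\vx^\sigma(s)-\vx^\sigma(t_{n+1})\|^2 + 2\|e_{n+1}\|^2$, where the first term is controlled by a (simpler, $\vM_1$-free) time-regularity bound and the second by the sup estimate just proved.
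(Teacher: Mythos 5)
Your proposal is correct and follows essentially the same route the paper indicates: the energy argument of Proposition \ref{Corollary 4.1} applied to $e_n=\vx^\sigma(t_n)-\vx_n^\sigma$, with the martingale residual controlled by It\^o's isometry together with hypothesis \eqref{today08}, and the drift defect absorbed via the $\vM_1$-time-regularity supplied by \eqref{gradient stability for forward SDE} specialised to $\vu=0$. The only cosmetic point is that the stochastic part of the time-regularity term yields $h\|\vM_1\sigma\|_{\mathbb{L}^2}^2$ (and the $\mathbb{L}^2$ piece yields $h\|\sigma\|_{\mathbb{L}^2}^2$), terms not literally present on the right-hand side of \eqref{today09}; this is a bookkeeping issue in the statement itself rather than a flaw in your argument.
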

	\begin{proof}
		The	proof of this result can be shown as follows similar lines as done in the previous Proposition \ref{Corollary 4.1} with the help of the estimates \eqref{gradient stability for forward SDE} \& \eqref{today08} and It\^o isometry.
	\end{proof}
	\subsection{Error estimate for optimal pair}
	\begin{Theorem}[Rate of convergence for control constraint]\label{theorem 4.1} Let $\vx_0\in\mathbb{R}^d$. Let $\sigma$ satisfies the following estimate
		\begin{align*}
			\|\sigma-\Pi_h\sigma\|_{\mathbb{L}^2}^2\le\,C_T\,h.
		\end{align*}
		Let $(\vx^*, \vu^*) \in \vX \times \vU_{ad}$ and $(\vx_{h}^*, \vu_{h}^*)\in\vX_h\times\vU_{ad}^h$ be the unique optimal pairs for the continuous $\mathbf{SLQ}$ problem \eqref{cost functional minimum}-\eqref{forward SDE} and the discrete $\mathbf{SLQ}_h$ problem \eqref{discrete cost function}-\eqref{forward_difference_equations_state}, respectively. Then the following error bound holds:
		\begin{align}\
			\|\sqrt{\alpha}(\vu^*-\vu_h^*)\|_{\mathbb{L}^2}^2&+ \|\sqrt{\vB}(\vx^*-\vx_h^*)\|_{\mathbb{L}^2}^2 + \mathbb{E}\big[\|\sqrt{\vD}(\vx^*(T)-\vx_h^*(T))\|^2\big]\notag\\&\le C_{T,d}\,h^{1/2}(\|\vx_0\|^2+ \|\vM_1\vx_0\|^2+\|\vM\vx_0\|^2 + \|\sigma\|_{C_t\mathbb{L}^2}^2+\|\vM\vsigma\|_{\mathbb{L}^2}^2+ \|\vM_1\vsigma\|_{\mathbb{L}^2}^2).
		\end{align}
	\end{Theorem}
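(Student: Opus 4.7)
The plan is to exploit the $\alpha$-strong convexity of the (quadratic) reduced cost functionals $\hat{\mathcal{J}}(\vu) := \mathcal{J}(\vx[\vu], \vu)$ and its discrete analogue $\hat{\mathcal{J}}_h(\vu_h) := \mathcal{J}_h(\vx_h[\vu_h], \vu_h)$. Since both are exact second-order polynomials in the control, combining a Taylor expansion with the first-order variational inequalities \eqref{today06} and \eqref{today07} produces
\begin{equation*}
\hat{\mathcal{J}}(\vu) - \hat{\mathcal{J}}(\vu^*) \ge \tfrac12 Q(\vu - \vu^*) \quad (\vu \in \vU_{ad}), \qquad \hat{\mathcal{J}}_h(\vu_h) - \hat{\mathcal{J}}_h(\vu_h^*) \ge \tfrac12 Q_h(\vu_h - \vu_h^*) \quad (\vu_h \in \vU_{ad}^h),
\end{equation*}
where $Q(\vw) := \alpha\|\vw\|_{\mathbb{L}^2}^2 + \|\sqrt{\vB}\vy[\vw]\|_{\mathbb{L}^2}^2 + \mathbb{E}[\|\sqrt{\vD}\vy[\vw](T)\|^2] \ge \alpha\|\vw\|_{\mathbb{L}^2}^2$ is the Hessian bilinear form and $Q_h$ its discrete counterpart. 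Testing the first at $\vu = \vu_h^*$ (admissible since $\vU_{ad}^h \subset \vU_{ad}$) and the second at $\vu_h = \Pi_h \vu^*$ (admissible since $\Pi_h$ preserves the pointwise box constraint) and then summing yields the master inequality
\begin{equation*}
\tfrac12 Q(\vu^* - \vu_h^*) + \tfrac12 Q_h(\Pi_h \vu^* - \vu_h^*) \le (A) + (B) + (C),
\end{equation*}
with $(A) := \hat{\mathcal{J}}(\vu_h^*) - \hat{\mathcal{J}}_h(\vu_h^*)$, $(B) := \hat{\mathcal{J}}_h(\Pi_h\vu^*) - \hat{\mathcal{J}}(\Pi_h\vu^*)$, and $(C) := \hat{\mathcal{J}}(\Pi_h\vu^*) - \hat{\mathcal{J}}(\vu^*)$.

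\textbf{Bounding the consistency errors.} For (A) and (B), I factor each cost-difference as $\hat{\mathcal{J}}(\vu_h) - \hat{\mathcal{J}}_h(\vu_h) = \tfrac12\mathbb{E}[\int_0^T \langle \vx[\vu_h] - \vx_h[\vu_h], \vB(\vx[\vu_h] + \vx_h[\vu_h])\rangle\,\mathrm{d}s + (\text{terminal})]$. Via the decomposition $\vx[\vu_h] - \vx_h[\vu_h] = (\vy[\vu_h] - \vy_h[\vu_h]) + (\vx^\sigma - \vx_h^\sigma)$, Proposition \ref{Corollary 4.1} supplies an $\mathcal{O}(h)$ bound for the first summand and estimate \eqref{today09} an $\mathcal{O}(h^{1/2})$ bound for the second. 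Together with the a priori bounds \eqref{L^2 bound for optimal pair} and \eqref{today11} on the optimal state--control pairs, Cauchy--Schwarz yields $|(A)| + |(B)| \le C_{T,d} h^{1/2}$. For (C), the quadratic structure gives $(C) = \langle \hat{\mathcal{J}}'(\vu^*), \Pi_h\vu^* - \vu^*\rangle + \tfrac12 Q(\Pi_h\vu^* - \vu^*)$; the time-regularity $\|\Pi_h\vu^* - \vu^*\|_{\mathbb{L}^2} = \mathcal{O}(h^{1/2})$ from Corollary \ref{Corollary 2.1} together with the boundedness of $\hat{\mathcal{J}}'(\vu^*)$ then gives $|(C)| \le C_{T,d} h^{1/2}$.

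\textbf{From $\vy$-norm to $\vx$-norm.} Coercivity $Q(\vw) \ge \alpha\|\vw\|_{\mathbb{L}^2}^2$ now delivers
\[
\|\sqrt{\alpha}(\vu^* - \vu_h^*)\|_{\mathbb{L}^2}^2 + \|\sqrt{\vB}\vy[\vu_h^* - \vu^*]\|_{\mathbb{L}^2}^2 + \mathbb{E}\bigl[\|\sqrt{\vD}\vy[\vu_h^* - \vu^*](T)\|^2\bigr] \le C_{T,d} h^{1/2}.
\]
To replace the $\vy$-norms by the desired $\vx^* - \vx_h^*$ norms in the statement, I use the identity
\[
\vx^* - \vx_h^* = -\vy[\vu_h^* - \vu^*] + \bigl(\vy[\vu_h^*] - \vy_h[\vu_h^*]\bigr) + (\vx^\sigma - \vx_h^\sigma),
\]
apply the triangle inequality, and bound the last two contributions by Proposition \ref{Corollary 4.1} and \eqref{today09}, both of which fit into the same $\mathcal{O}(h^{1/2})$ budget.

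\textbf{Main obstacle.} The rate is capped at $h^{1/2}$ for two intrinsic reasons working in tandem: the noise discretization $\vx^\sigma - \vx_h^\sigma$ attains only $\mathcal{O}(h^{1/2})$ under the assumed time-regularity \eqref{today08} of $\sigma$, and the time-regularity of the optimal control from Corollary \ref{Corollary 2.1} is likewise only $\mathcal{O}(h^{1/2})$. The control-driven discretization errors are actually $\mathcal{O}(h)$ from Proposition \ref{Corollary 4.1}, but they are asymptotically subdominant. The sharp dependence of $C_{T,d}$ on $\|\vM_1 \vx_0\|$, $\|\vM \vx_0\|$, $\|\vM_1\sigma\|_{\mathbb{L}^2}$ and $\|\vM\sigma\|_{\mathbb{L}^2}$ seen in the statement traces back directly to the noise estimate \eqref{today09} and the stability bounds \eqref{today02}--\eqref{gradient stability for forward SDE}. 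The only remaining technical care is the careful bookkeeping of cross terms of the form $\langle \vy_h[\vu_h^*], \vB(\vx^\sigma - \vx_h^\sigma)\rangle$ when expanding (A) and (B), where the uniform $\mathbb{L}^2$-boundedness of $\vu_h^*$ provided by \eqref{today11} is crucial to keep all constants $h$-independent.
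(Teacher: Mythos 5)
Your proposal is correct and reaches the stated $h^{1/2}$ rate with the same constant dependencies, but it organizes the argument differently from the paper. The paper works directly at the level of the optimality conditions: it writes $\|\sqrt{\alpha}(\vu^*-\vu_h^*)\|_{\mathbb{L}^2}^2 = I+J$, inserts the two variational inequalities \eqref{today06} and \eqref{today07} (tested, as you do, at the crossed points $\vu_h^*$ and $\Pi_h\vu^*$) to trade control terms for state terms, and then expands by hand into pieces $I_1,\dots,I_4$ plus a remainder $\mathfrak{E}$; the terms $I_1$ and $I_3$ produce the negative quadratics $-\|\sqrt{\vB}(\vx^*-\vx_h^*)\|^2_{\mathbb{L}^2}$ and $-\mathbb{E}\big[\|\sqrt{\vD}(\vx^*(T)-\vx_h^*(T))\|^2\big]$, which are absorbed into the left-hand side, while $I_2$, $I_4$, $\mathfrak{E}$ and $J$ carry the consistency and regularity errors. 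You package the identical ingredients --- the two variational inequalities, Proposition \ref{Corollary 4.1}, estimate \eqref{today09}, Corollary \ref{Corollary 2.1}, and the a priori bounds \eqref{L^2 bound for optimal pair} and \eqref{today11} --- into the exact second-order Taylor expansion of the two quadratic reduced functionals, yielding the master inequality $\tfrac12 Q(\vu^*-\vu_h^*)+\tfrac12 Q_h(\Pi_h\vu^*-\vu_h^*)\le (A)+(B)+(C)$. What your route buys is transparency: the coercivity in $(\vu,\vB,\vD)$ comes for free from the Hessian rather than having to be extracted from $I_1$ and $I_3$, and the source of the $h^{1/2}$ cap (the noise consistency in $(A)$, $(B)$ and the linear term of $(C)$, which is precisely the paper's $\mathfrak{E}$) is isolated cleanly; the price is the extra final step converting $\|\sqrt{\vB}\,\vy[\vu^*-\vu_h^*]\|_{\mathbb{L}^2}$ into $\|\sqrt{\vB}(\vx^*-\vx_h^*)\|_{\mathbb{L}^2}$ by the triangle inequality, which the paper obtains directly because its $I_1$, $I_3$ are already written in the state variable. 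Your observation that the linear term $\langle\hat{\mathcal{J}}'(\vu^*),\Pi_h\vu^*-\vu^*\rangle$ is what vanishes in the unconstrained case matches the paper's remark explaining the improvement to rate $h$ for free control.
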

	\begin{proof}
		We start with the expression for the squared norm of the difference between the optimal control and its approximation:
		\begin{align*}
			\|\sqrt{\alpha}(\vu^* - \vu_h^*)\|_{\mathbb{L}^2}^2 := I + J,
		\end{align*}
		where 
		\begin{align*}
			I &= \mathbb{E}\bigg[\int_0^T \left<\alpha(\vu^* - \vu_h^*), \Pi_h\vu^* - \vu_h^*\right> {\rm d}s\bigg], \\
			J &= \mathbb{E}\bigg[\int_0^T \left<\alpha(\vu^* - \vu_h^*), \vu^* - \Pi_h\vu^*\right> {\rm d}s\bigg].
		\end{align*}
		\textbf{Step 1.} In this step, we will estimate the term \( I \):
		\begin{align*}
			I &= \mathbb{E}\bigg[\int_0^T \left<\alpha\vu^*, \Pi_h\vu^* - \vu_h^*\right> {\rm d}s\bigg] - \mathbb{E}\bigg[\int_0^T \left<\alpha\vu_h^*, \Pi_h\vu^* - \vu_h^*\right> {\rm d}s\bigg] \\
			&= K_1 + K_2.
		\end{align*}
		Next, we estimate the term \( K_1 \):
		\begin{align*}
			K_1 &= -\mathbb{E}\bigg[\int_0^T \left<\alpha\vu^*, \vu_h^* - \vu^*\right> {\rm d}s\bigg] + \mathbb{E}\bigg[\int_0^T \left<\alpha\vu^*, \Pi_h\vu^* - \vu^*\right> {\rm d}s\bigg] \\
			&= K_{11} + K_{12}.
		\end{align*}
		To estimate \( K_{11} \), we proceed as follows: By using the integral identity \eqref{today06} and the linearity of the differential equation \eqref{random differential equation}, we obtain:
		\begin{align*}
			K_{11} &\leq \mathbb{E}\bigg[ \int_0^T \left<\vB\vx^*(s), \vy[\vu_h^* - \vu^*](s)\right> {\rm d}s\bigg] + \mathbb{E}\big[\left<\vD\vx_h^*(T), \vy[\vu_h^* - \vu^*](T)\right>\big] \\
			&= \mathbb{E}\bigg[\int_0^T \left<\vB\vx^*(s), \vy[\vu_h^* - \Pi_h\vu^*](s)\right> {\rm d}s\bigg] + \mathbb{E}\big[\left<\vD\vx^*(T), \vy[\vu_h^* - \Pi_h\vu^*](T)\right>\big] \\
			&\quad + \mathbb{E}\big[\int_0^T \left<\vB\vx^*(s), \vy[\Pi_h\vu^* - \vu^*](s)\right> {\rm d}s\bigg] + \mathbb{E}\bigg[\left<\vD\vx^*(T), \vy[\Pi_h\vu^* - \vu^*](T)\right>\big].
		\end{align*}
		This implies that:
		\begin{align*}
			K_{11} + K_{12} &\leq \mathbb{E}\int_0^T \left<\vB\vx^*(s), \vy[\vu_h^* - \Pi_h\vu^*](s)\right> {\rm d}s  + \mathbb{E}\left<\vD\vx^*(T), \vy[\vu_h^* - \Pi_h\vu^*](T)\right> + \mathfrak{E},
		\end{align*}
		where
		\begin{align*}
			\mathfrak{E} &= \mathbb{E}\bigg[\int_0^T \left<\vB\vx^*(s), \vy[\Pi_h\vu^* - \vu^*](s)\right> {\rm d}s\bigg]+ \mathbb{E}\bigg[\int_0^T \left<\vD\vx^*(T), \vy[\Pi_h\vu^* - \vu^*](T)\right>\bigg] \\
			&\quad + \mathbb{E}\bigg[\int_0^T \left<\alpha\vu^*(s), \Pi_h\vu^*(s) - \vu^*(s)\right> {\rm d}s\bigg].
		\end{align*}
		On the other hand, for the term \( K_2 \), we use the integral identity \eqref{today07} to obtain
		\[
		K_2 \le -\mathbb{E}\bigg[\int_0^T \left< \vB \vx_h^*(s), \vy_h[\vu_h^* - \Pi_h \vu^*](s) \right> \, \mathrm{d}s\bigg] - \mathbb{E}\big[\left< \vD \vx_h^*(T), \vy_h[\vu_h^* - \Pi_h \vu^*](T) \right>\big].
		\]
		Now we can conclude that 
		\[
		I = K_1 + K_2 \le I_1 + I_2 + I_3 + I_4 + \mathfrak{E},
		\]
		where the terms are defined as follows:
		\begin{align*}
			I_1 & = \mathbb{E}\left[\int_0^T \left< \vB (\vx^*(s) - \vx_h^*(s)), \vy[\vu_h](s) \right> \, \mathrm{d}s\right], \\
			I_2 & = \mathbb{E}\left[\int_0^T \left< \vB \vx_h^*(s), \vy_h[\vu_h](s) - \vy[\vu_h](s) \right> \, \mathrm{d}s\right], \\
			I_3 & = \mathbb{E}\left[\left< \vD (\vx^*(T) - \vx_h^*(T)), \vy[\vu_h](T) \right>\right], \\
			I_4 & = \mathbb{E}\left[\left< \vD \vx_h^*(T), \vy_h[\vu_h](T) - \vy[\vu_h](T) \right>\right],
		\end{align*}
		with 
		\[
		\vu_h = \vu_h^* - \Pi_h \vu^*.
		\]
		We will now estimate each term separately.

		\noindent
		\textbf{Step 1(a).} In this step, we will estimate $I_1$ term as follows:
		\begin{align*}
			I_1&= -\mathbb{E}\bigg[\int_0^T\left<\vB(\vx^*(s)-\vx^*_h(s)), \vx^*(s)-\vx^*_h(s)\right>{\rm d}s\bigg]-\mathbb{E}\bigg[\int_0^T\left<\vB(\vx^*(s)-\vx^*_h(s)), \vx^\sigma(s)-\vx^\sigma_h(s)\right>{\rm d}s\bigg]\\&\qquad+ \mathbb{E}\bigg[\int_0^T\left<\vB(\vx^*(s)-\vx^*_h(s)), \vy[\vu^*](s)-\vy[\Pi_h\vu^*]\right>{\rm d}s\bigg]\\
			&:=I_{11}+ I_{12} + I_{13}.
		\end{align*}
		Make use of Young's inequality and the estimate \eqref{today09}, we conclude that for any $\delta>0$
		\begin{align*}
			I_{12}&\le \delta\|\sqrt{B}(\vx^*-\vx_h^*)\|_{\mathbb{L}^2}^2 + c_\delta \|\sqrt{B}(\vx^\sigma-\vx_h^\sigma)\|_{\mathbb{L}^2}^2\\&\le\delta\|\sqrt{B}(\vx^*-\vx_h^*)\|_{\mathbb{L}^2}^2  + c_\delta C_T h (\|\vM\vx_0\|^2+\|\vM\sigma\|_{\mathbb{L}^2}^2).
		\end{align*}
		For term $I_{13}$, we obtain with the help of the estimates 
		\eqref{stability for forward SDE_1}\&\eqref{today10}
		\begin{align*}
			I_{13}&\le\,\delta \|\sqrt{\vB}(\vx^*-\vx_h^*)\|_{\mathbb{L}^2}^2 + c_\delta \|\sqrt{\vB}\vy[\vu^*-\Pi_h\vu^*]\|_{\mathbb{L}^2}^2\\&\le\delta \|\sqrt{\vB}(\vx^*-\vx_h^*)\|_{\mathbb{L}^2}^2+c_\delta C_{T,d}\|\vu^*-\Pi_h\vu^*\|^2_{\mathbb{L}^2}\\&\le\,\delta \|\sqrt{\vB}(\vx^*-\vx_h^*)\|_{\mathbb{L}^2}^2 + c_\delta C_{T,d} h(\|\vx_0\|^2+ \|\vM_1\vx_0\|^2 + \|\sigma\|_{\mathbb{L}^2}^2+ \|\vM_1\vsigma\|_{\mathbb{L}^2}^2).
		\end{align*}
		\textbf{Step 1(b).} In this step, we first use \eqref{today05} to estimate $I_2$ as follows:
		\begin{align*}
			I_2&\le\|\vB\vx_h^*\|_{\mathbb{L}^2}\|\vy[\vu_h]-\vy_h[\vu_h]\|_{\mathbb{L}^2}\le \,\|\vB\vx_h^*\|_{\mathbb{L}^2}\,h\|\vu_h\|_{\mathbb{L}^2}\le\,\frac{h}{2}\|\vB\vx_h^*\|_{\mathbb{L}^2}^2 + \frac{h}{2}\|\vu_h\|_{\mathbb{L}^2}^2\\
			&\le\,C_{T,d}h(\|\vx_0\|^2+ \|\sigma\|^2_{C_t\mathbb{L}^2})+ h\big(\|\vu_h^*\|_{\mathbb{L}^2}^2 + \|\Pi_h\vu^*(t)\|_{\mathbb{L}^2}^2\big)\qquad\text{(by using \eqref{today11})}\\
			&\le\,C_{T,d}h(\|\vx_0\|^2+ \|\sigma\|^2_{C_t\mathbb{L}^2}) +C_{T,d}h\sup_{t\in[0,T]}\mathbb{E}\big[\|\vp(t)\|^2\big] \qquad\text{( by using \eqref{optimality condition})}\\&\le\,C_{T,d}h(\|\vx_0\|^2+ \|\sigma\|^2_{C_t\mathbb{L}^2})\qquad\text{(by using \eqref{stability for backward SDE})}.
		\end{align*}
		\textbf{Step 1(c).} In this step, we estimate $I_3$ as follows:
		\begin{align*}
			I_3&=-\mathbb{E}\bigg[\left<\vD(\vx^*(T)-\vx_h^*(T)), \vx^*(T)-\vx_h^*(T)\right>\bigg] - \mathbb{E}\bigg[\left<\vD(\vx^*(T)-\vx_h^*(T)), \vx^\sigma(T)-\vx_h^\sigma(T)\right>\bigg]\\&\qquad +\mathbb{E}\bigg[\left<\vD(\vx^*(T)-\vx_h^*(T)), \vy[\vu^*](T)-\vy[\Pi_h\vu^*](T)\right>\bigg]\\
			&:=I_{31} + I_{32} + I_{33}.
		\end{align*}
		For term $I_{32}$, by making use of Young's inequality and the estimate \eqref{today04}, we obtain
		\begin{align*}
			I_{32}&\le\,\delta\|\sqrt{\vD}(\vx^*(T)-\vx^*_h(T)\|_{\mathbb{L}^2}^2+ c_\delta\|\vx^\sigma(T)-\vx_h^\sigma(T)\|_{\mathbb{L}^2}^2\\
			&\le\,\delta\|\sqrt{\vD}(\vx^*(T)-\vx^*_h(T))\|_{\mathbb{L}^2}^2+ c_\delta C_T h(\|\vM\vx_0\|^2+\|\vM\sigma\|_{\mathbb{L}^2}^2).
		\end{align*}
		For term $I_{34}$, we use \eqref{stability for forward SDE_1}\&\eqref{today11} to get
		\begin{align*}
			I_{33}&\le\,\delta\mathbb{E}\big[\|\sqrt{\vD}(\vx^*(T)-\vx_h^*(T))\|^2\big] + \delta\mathbb{E}\big[\|\vy[\vu^*-\Pi_h\vu^*](T)\|^2\big]\\
			&\le\,\delta\mathbb{E}\big[\|\sqrt{\vD}(\vx^*(T)-\vx_h^*(T))\|^2\big]+ c_\delta C_{T,d}\,h(\|\vx_0\|^2+ \|\vM_1\vx_0\|^2 + \|\sigma\|_{\mathbb{L}^2}^2+ \|\vM_1\vsigma\|_{\mathbb{L}^2}^2).	
		\end{align*}
		\textbf{Step 1(d).} By using similar lines as used in step $1(b)$ and estimates \eqref{today04}\&\eqref{today09}, we obtain
		\begin{align}
			I_{4}\le\,C_\delta h+ \delta\|\vu^*-\vu_h^*\|_{\mathbb{L}^2}^2.
		\end{align}
		Finally we can conclude from last four sub-steps that for any $\delta>0$, there exists $c_\delta>0$ such that
		\begin{align*}
			I_1+ I_2+ I_3+I_4\le(\delta-1)\|\sqrt{\vB}(\vx^*-\vx_h^*)\|_{\mathbb{L}^2}^2 + (\delta-1)\mathbb{E}\big[\|\sqrt{\vD}(\vx^*(T)-\vx_h^*(T))\|^2\big]+ \delta\|(\vu^*-\vu_h^*)\|+ c_\delta C_{T,d} h.
		\end{align*}
		\textbf{Step 1(f).} In this step, we estimate the term $\mathfrak{E}$. We can write 
		\begin{align*}
			\mathfrak{E}=\mathfrak{E}_{1}+\mathfrak{E}_{2}+\mathfrak{E}_3,
		\end{align*}
		where
		\begin{align*}
			\mathfrak{E}_1&=\mathbb{E}\bigg[\int_0^T\left<\vB\vx^*(s),\vy[\Pi_h\vu^*-\vu^*](s)\right>{\rm d}s\bigg],\\
			\mathfrak{E}_2&=\mathbb{E}\bigg[\int_0^T\left<\vD\vx^*(T),\vy[\Pi_h\vu^*-\vu^*](T)\right>\bigg],\\
			\mathfrak{E}_3&=\mathbb{E}\bigg[\int_0^T\left<\alpha\vu^*(s),\Pi_h\vu^*(s)-\vu^*(s)\right>{\rm d}s\bigg].
		\end{align*}
		We use the estimates \eqref{today05} to obtain
		\begin{align*}
			\mathfrak{E}_1&\le\,\|\vB\vx^*\|_{\mathbb{L}^2}\|\vy[\Pi_h\vu^*-\vu^*]\|_{L^2}
			\le C_{T,d}\|\vB\vx^*\|\|\Pi_h\vu^*-\vu^*\|_{\mathbb{L}^2}\\&\le \sqrt{C_{T,d}}\|\vB\vx^*\|_{\mathbb{L}^2} h^{1/2}\sqrt{(\|\vx_0\|^2+\|\vM_1\vx_0\|+\|\sigma\|_{\mathbb{L}^2}^2+ \|\vM_1\sigma\|_{\mathbb{L}^2}^2)}\\
			&\le\,h^{1/2}(\|\vx_0\|^2+\|\vM_1\vx_0\|+\|\sigma\|_{\mathbb{L}^2}^2+ \|\vM_1\sigma\|_{\mathbb{L}^2}^2).
		\end{align*}
		Similarly we can estimate
		\begin{align*}
			\mathfrak{E}_2 +\mathfrak{E}_3\le C_{T,d}h^{1/2}\big(\|\vx_0\|^2+\|\vM_1\vx_0\|+\|\sigma\|_{\mathbb{L}^2}^2+ \|\vM_1\sigma\|_{\mathbb{L}^2}^2\big).
		\end{align*}
		Finally we obtain
		\begin{align}
			\mathfrak{E}\le C_{T,d}h^{1/2}\big(\|\vx_0\|^2+\|\vM_1\vx_0\|+\|\sigma\|_{\mathbb{L}^2}^2+ \|\vM_1\sigma\|_{\mathbb{L}^2}^2\big).
		\end{align}
		\textbf{Step 2}. We estimate $J$ term as follows: By using Young's inequality and \eqref{today11}, for any $\delta>0$ we obtain
		\begin{align*}
			J&\le\,\delta \|(\vu^*-\vu_h^*)\|_{\mathbb{L}^2}^2 + c_\delta \|(\vu^*-\Pi_h\vu^*)\|_{\mathbb{L}^2}^2\\
			&\le\,\delta \|(\vu^*-\vu_h^*)\|_{\mathbb{L}^2}^2 + c_\delta C_{T,d}h(\|\vx_0\|^2+ \|\vM_1\vx_0\|^2 + \|\sigma\|_{\mathbb{L}^2}^2+ \|\vM_1\vsigma\|_{\mathbb{L}^2}^2).
		\end{align*}
		Finally from steps, by choosing small enough $\delta>0$, we conclude that there exist $C_{T,d}>0$ such that
		\begin{align*}
			\|\sqrt{\alpha}(\vu^*-\vu_h^*)\|_{\mathbb{L}^2}^2&+ \|\sqrt{\vB}(\vx^*-\vx_h^*)\|_{\mathbb{L}^2}^2 + \mathbb{E}\big[\|\sqrt{\vD}(\vx^*(T)-\vx_h^*(T))\|^2\big]\\&\le C_{T,d}\,h^{1/2}(\|\vx_0\|^2+ \|\vM_1\vx_0\|^2+\|\vM\vx_0\|^2 + \|\sigma\|_{\mathbb{L}^2}^2+\|\vM\vsigma\|_{\mathbb{L}^2}^2+ \|\vM_1\vsigma\|_{\mathbb{L}^2}^2).
		\end{align*}
	\end{proof}
	\begin{Remark}
		It is important to note that in the proof of the result \eqref{theorem 4.1}, the term \(\mathfrak{E}\) contributes an order of \(h^{1/2}\). In the case of free control, this term becomes identically zero (see Remark \eqref{important remark}), leading to an improvement in the error estimate. 
	\end{Remark}
	We state the following result specifically for the case of free control.
	\begin{Theorem}[Rate of convergence for free control]
		Let $\vx_0\in\mathbb{R}^d$ and $\vU_{ad}=\vU$. Let $\sigma$ satisfies the following estimate:
		\begin{align*}
			\|\sigma-\Pi_h\sigma\|_{\mathbb{L}^2}^2\le\,C_T\,h.
		\end{align*}
		Let $(\vx^*, \vu^*) \in \vX \times \vU$ and $(\vx_{h}^*, \vu_{h}^*)\in \vX_h \times \vU_h$ be the unique optimal pairs for the continuous $\mathbf{SLQ}$ problem and the discrete $\mathbf{SLQ}_h$ problem, respectively. Then the following error bound holds:
		\begin{align}\
			\|(\vu^*-\vu_h^*)\|_{\mathbb{L}^2}^2&+ \|\sqrt{\vB}(\vx^*-\vx_h^*)\|_{\mathbb{L}^2}^2 + \mathbb{E}\big[\|\sqrt{\vD}(\vx^*(T)-\vx_h^*(T))\|^2]\notag\\&\le C_{T,d}\,h(\|\vx_0\|^2+ \|\vM_1\vx_0\|^2+\|\vM\vx_0\|^2 + \|\sigma\|_{\mathbb{L}^2}^2+\|\vM\vsigma\|_{\mathbb{L}^2}^2+ \|\vM_1\vsigma\|_{\mathbb{L}^2}^2).
		\end{align}
	\end{Theorem}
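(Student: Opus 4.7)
The plan is to retrace the proof of Theorem~\ref{theorem 4.1} almost verbatim, and then locate exactly which term is responsible for the loss of a factor $h^{1/2}$ there, showing that this term vanishes in the unconstrained setting. By Remark~\ref{important remark}, when $\vU_{ad}=\vU$ the first-order conditions become equalities: $\alpha\vu^*=\vN^T\vp[\vx^*]$ and $\alpha\vu_h^*=\vN^T\vp_h[\vx_h^*]$. Consequently, both integral inequalities \eqref{today06} and \eqref{today07} are equalities, and any test direction is admissible (in particular $\vv=\Pi_h\vu^*-\vu^*$, which previously was not in $\vU_{ad}$).

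First I would split, as in Theorem~\ref{theorem 4.1},
\begin{align*}
\|\vu^*-\vu_h^*\|_{\mathbb{L}^2}^2 \;=\; I+J,
\end{align*}
with $I$ testing against $\Pi_h\vu^*-\vu_h^*$ and $J$ against $\vu^*-\Pi_h\vu^*$, and then decompose $I=K_1+K_2$ exactly as before. The crucial observation is that in the free-control setting, when unpacking $K_{11}$ via the variational identity $\langle\langle\mathcal{\hat J}'(\vu^*),\vv\rangle\rangle=0$ and $K_{2}$ via the discrete identity $\langle\langle\mathcal{\hat J}_h'(\vu_h^*),\vv_h\rangle\rangle=0$, one may directly use $\vv=\vu_h^*-\vu^*$ and $\vv_h=\vu_h^*-\Pi_h\vu^*$ respectively without splitting off a $\Pi_h\vu^*-\vu^*$ piece. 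This means the three auxiliary terms
\begin{align*}
\mathfrak{E}_1,\;\mathfrak{E}_2,\;\mathfrak{E}_3
\end{align*}
from Step~1(f) never appear: their only purpose in the constrained proof was to compensate for the fact that one could not test the inequality \eqref{today06} against an arbitrary direction. It is precisely $\mathfrak{E}$ that produced the $h^{1/2}$ bottleneck, so removing it yields an $\mathcal{O}(h)$ estimate.

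After this reduction, I would estimate $I_1,I_2,I_3,I_4$ exactly as in Steps~1(a)--1(d) of Theorem~\ref{theorem 4.1}: $I_{11}$ and $I_{31}$ are negative and can be absorbed; $I_{12}$, $I_{32}$ are handled by Young plus \eqref{today09}, giving $\mathcal{O}(h)$; $I_{13}$, $I_{33}$ use the linear stability \eqref{stability for forward SDE_1} together with the time-regularity \eqref{today10} of $\vu^*$, also giving $\mathcal{O}(h)$; and $I_2$, $I_4$ use \eqref{today05}, \eqref{today04} combined with the $\mathbb{L}^2$-bound \eqref{today11}, once more of order $h$. The term $J$ is estimated directly via Young plus \eqref{today10}, contributing a $c_\delta C_{T,d}h$ term and a $\delta\|\vu^*-\vu_h^*\|_{\mathbb{L}^2}^2$ that is absorbed on the left. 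Collecting everything and choosing $\delta$ small enough yields the claimed $\mathcal{O}(h)$ bound.

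The only delicate step is to verify that, in the present unconstrained situation, the Fr\'echet-derivative characterizations can indeed be applied with the non-admissible test directions $\vu_h^*-\vu^*$ and $\vu_h^*-\Pi_h\vu^*$ so that no one-sided correction is needed; once this is in place, the remainder of the argument is a direct adaptation of the previous proof and the rate improvement from $h^{1/2}$ to $h$ is a clean consequence of the disappearance of $\mathfrak{E}$.
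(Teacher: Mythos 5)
Your proposal is correct and follows essentially the same route as the paper: the paper does not write out a separate proof but, in the remark preceding this theorem, states precisely your key observation, namely that the only $\mathcal{O}(h^{1/2})$ contribution in the proof of Theorem \ref{theorem 4.1} is the term $\mathfrak{E}=\left<\left<\hat{\mathcal{J}}'(\vu^*),\Pi_h\vu^*-\vu^*\right>\right>$, which vanishes identically when $\vU_{ad}=\vU$ since $\hat{\mathcal{J}}'(\vu^*)=0$ (Remark \ref{important remark}), while all remaining terms $I_1,\dots,I_4,J$ are already of order $h$. The only nitpick is your remark about $\Pi_h\vu^*-\vu^*$ being "not in $\vU_{ad}$" previously: that direction was admissible even in the constrained case (as $\Pi_h\vu^*\in\vU_{ad}^h\subset\vU_{ad}$); the real issue is that the constrained first-order condition only gives $\mathfrak{E}\ge 0$, forcing one to estimate it, whereas the unconstrained equality makes it vanish.
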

	
	\subsection{Error estimate for projected gradient descent method}
	We want to show the convergence of $\mathbf{SLQ}_{h}^{{\rm grad}}$ ({\em i.e., Algorithm \ref{Algorithm_implementable}}) for $\kappa>0$ sufficiently large as $\ell\to\infty$. For this purpose, we recall the notations $\mathcal{S}_{h}$, $\hat{\mathcal{J}}_{h}$, and $\mathcal{T}_{h}$ introduced in previous sections. First, we recall the Lipschitz continuity of $\mathcal{D}\hat{\mathcal{J}}_{h}$: since
	\[
	\mathcal{D}^2 \hat{\mathcal{J}}_{h}=\mathbf{I}+\alpha\mathcal{S}_{h}^{*}\mathcal{S}_{h},
	\]
	we find $K := \|\mathbf{I} + \mathcal{S}_{h}^{*}\mathcal{S}_{h}\|_{\mathcal{L}(\mathbf{U}_{h}, \mathbf{U}_{h})}$ such that
	\begin{align*}
		\|\mathcal{D}\hat{\mathcal{J}}_{h}(\vu_{h}^1)-\mathcal{D}\hat{\mathcal{J}}_{h}(\vu_{h}^2)\|_{\vU_{h}} \le K\,\|\vu_{h}^1-\vu_{h}^2\|_{\vU_{h}}.
	\end{align*}
	Here, one can find a bound on $K$ with the help of stability estimate \eqref{today 200} such that  
	\begin{align*}
		K &= \|\mathbf{I} + \alpha\mathcal{S}_{h}^{*}\mathcal{S}_{h}\|_{\mathcal{L}({\vU}_{h}, {\vU}_{h})} \le \|\mathbf{I}\| +\alpha \|\mathcal{S}_{h}\|_{\mathcal{L}({\vU}_{h}, \, {\vX}_{h})}^2 \\
		&\le 1 + \alpha \|\vN\|^2T.
	\end{align*}
	Since $\mathbf{SLQ}_{h}^{{\rm grad}}$ is the gradient descent method for $\mathbf{SLQ}_{h}$, we have the following result.
	\begin{Theorem}[Convergence of gradient descent method]\label{convergence of gradient descent method}
		Let $\kappa>K$. Let $\{\vu_{h}^{(\ell)}\}_{\ell\in\mathbb{N}_0}\subset\vU_{h}$ be a sequence of iterates generated by $\mathbf{SLQ}_{h}^{{\rm grad}}$, and $\vu^*_{h}$ be the unique optimal control to the $\mathbf{SLQ}_{h}$ problem. Then
		\begin{align*}
			{\displaystyle\begin{cases}
					\hat{\mathcal{J}}_{h}[\vu_{h}^{(\ell)}]-\hat{\mathcal{J}}_{h}[\vu_{h}^*]\le \frac{2\kappa\|\vu_{h}^{(0)}-\vu_{h}^*\|_{\vU_{h}}^2}{\ell}, \vspace{0.3cm} \\
					\|\vu_{h}^{(\ell)}-\vu^*_{h}\|_{\vU_{h}}^2 \le \bigg(1-\frac{\alpha}{\kappa}\bigg)^\ell \|\vu_{h}^{(0)}-\vu^*_{h}\|_{\vU_{h}}^2, \quad \ell=1,2,3,\ldots
			\end{cases}}
		\end{align*}
		\begin{proof}
			For the proof, we refer to \cite[Theorem 1.2.4]{Ne}.
		\end{proof}
	\end{Theorem}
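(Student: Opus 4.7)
The plan is to invoke the standard convergence theory for projected gradient descent applied to the reduced cost $\hat{\mathcal{J}}_h$ on the closed convex set $\vU_{ad}^h \subset \vU_h$. The two structural properties required are already in hand: (i) $\mathcal{D}\hat{\mathcal{J}}_h$ is $K$-Lipschitz, as recorded immediately before the theorem statement, and (ii) $\hat{\mathcal{J}}_h$ is $\alpha$-strongly convex, which is immediate from the Tikhonov term $\tfrac{\alpha}{2}\langle\langle \vu,\vu\rangle\rangle$ in the cost (the state-dependent terms are convex, so strong convexity is transferred directly from the quadratic control penalty). The hypothesis $\kappa > K$ then guarantees that $1/\kappa$ is a safe step size in the sense that the upper quadratic model
\[
\hat{\mathcal{J}}_h(\vv) \le \hat{\mathcal{J}}_h(\vw) + \langle\langle \mathcal{D}\hat{\mathcal{J}}_h(\vw), \vv - \vw\rangle\rangle + \tfrac{\kappa}{2}\|\vv - \vw\|_{\vU_h}^2
\]
holds for all $\vv,\vw \in \vU_h$.

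First I would rewrite the update \eqref{control update} as the proximal step
\[
\vu_h^{(\ell+1)} = \arg\min_{\vv \in \vU_{ad}^h} \Big\{ \langle\langle \mathcal{D}\hat{\mathcal{J}}_h(\vu_h^{(\ell)}), \vv\rangle\rangle + \tfrac{\kappa}{2}\|\vv - \vu_h^{(\ell)}\|_{\vU_h}^2 \Big\},
\]
using the identity $\mathcal{D}\hat{\mathcal{J}}_h(\vu_h^{(\ell)}) = \alpha \vu_h^{(\ell)} - \vN^{T}\vp_h^{(\ell)}$ and the definition of $\mathcal{P}_{ad}$, and extract the associated first-order variational inequality on $\vU_{ad}^h$. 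Testing the upper quadratic model at $\vv = \vu_h^{(\ell+1)}$, $\vw = \vu_h^{(\ell)}$ produces the descent inequality (and in particular $\hat{\mathcal{J}}_h(\vu_h^{(\ell+1)}) \le \hat{\mathcal{J}}_h(\vu_h^{(\ell)})$), while testing the variational inequality at $\vv = \vu_h^*$ and combining with convexity of $\hat{\mathcal{J}}_h$ delivers the one-step identity
\[
\|\vu_h^{(\ell+1)} - \vu_h^*\|_{\vU_h}^2 + \tfrac{2}{\kappa}\bigl(\hat{\mathcal{J}}_h(\vu_h^{(\ell+1)}) - \hat{\mathcal{J}}_h(\vu_h^*)\bigr) \le \|\vu_h^{(\ell)} - \vu_h^*\|_{\vU_h}^2.
\]
This is the workhorse identity from which both parts of the theorem follow.

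For the sublinear bound, I would sum the one-step identity over $0, 1, \dots, \ell - 1$, observe that the iterate-norm contributions telescope, and use the monotone decrease of $\hat{\mathcal{J}}_h(\vu_h^{(\ell)})$ already noted to replace each gap in the sum by the final gap. Rearranging yields the advertised $O(1/\ell)$ rate with constant $2\kappa\|\vu_h^{(0)} - \vu_h^*\|_{\vU_h}^2$.

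For the linear bound, I would upgrade the convexity step in the derivation of the one-step identity to $\alpha$-strong convexity; this adds an extra $\tfrac{\alpha}{2}\|\vu_h^{(\ell)} - \vu_h^*\|_{\vU_h}^2$ contribution which, after absorption into the left-hand side and dropping the nonnegative function-value gap, contracts the iterate norm by the factor $1 - \alpha/\kappa$. Iterating gives the geometric rate. The whole argument is textbook (which is why the paper outsources it to \cite[Theorem 1.2.4]{Ne}); the only place where care is needed — and the only conceivable obstacle — is the verification that the Hessian bound $K = 1 + \alpha\|\vN\|^2 T$ established just above the theorem indeed controls the operator norm of $\mathcal{D}^2\hat{\mathcal{J}}_h$ in the $\vU_h$-norm, which itself rests on the discrete stability estimate \eqref{today 200}. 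Once this bookkeeping is confirmed, no further difficulty arises.
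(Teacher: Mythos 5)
Your proposal is correct and is essentially the same argument as the one the paper invokes by citing Nesterov: the standard projected-gradient analysis combining the $K$-Lipschitz gradient (so that $1/\kappa$ with $\kappa>K$ is an admissible step), the $\alpha$-strong convexity supplied by the control penalty, and the one-step variational inequality from the proximal form of the update, which telescopes to the $O(1/\ell)$ bound and contracts by $1-\alpha/\kappa$ under strong convexity. Since the paper gives no details beyond the citation, your write-up simply supplies the omitted standard derivation; no discrepancy to report.
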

	
	\begin{Remark}[Finite dimensional approximation of an unbounded operator]\label{Remark 4.0}
		It is worth noting that the constant $C_{T,d}$ depends polynomially on the norms of the given matrices, and is independent from the norm of $\vM$ and $\vM_1$. Due to this, the error analysis encompasses a wide range of stochastic linear quadratic (SLQ) problems related to stochastic differential equations (SDEs), which result from finite-dimensional approximations of $\mathbf{SLQ}$ problems governed by (infinite-dimensional) stochastic partial differential equations (SPDEs). In particular, the matrix $\vM$ can be interpreted as a finite-dimensional approximation of an unbounded operator acting on an infinite-dimensional Sobolev space. For example, $\vM$ may be related to the discrete Laplacian $\Delta_h$ on the finite-dimensional space $V_d(D)$, consisting of piecewise affine  on a finite element mesh contained in $H^1(D)$, where $D$ is a bounded domain. Similarly, $\vM_1$ can be viewed as a matrix associated with the gradient operator. 
	\end{Remark}
	\subsection{Numerical example: rate of convergence}\label{numerical example 01}
	We consider the same data setup as described in Example 1 to ensure consistency in our analysis. Since the explicit form of the true solution is unknown, we adopt an alternative approach by using a reference solution obtained with a very fine mesh size, $h=4\times10^{-3}$. This allows us to estimate the error between the discrete solution and the reference (assumed true) solution effectively. We define
	\begin{align*}
		\mathcal{E}^{(\mathtt{M},L)}(h)=\frac{1}{\mathtt{M}}\sum_{\mathtt{m}=1}^{\mathtt{M}}\|\vx_h^{(\mathtt{m},L)}(T)-\vx_{\text{ref}}^{(\mathtt{m},L)}(T)\|,\qquad\mathcal{E}^{(\mathtt{M},L)}_1(h)=\frac{1}{\mathtt{M}}\sum_{\mathtt{m}=1}^{\mathtt{M}}\|\vx_h^{(\mathtt{m},L)}-\vx^{(\mathtt{m},L)}_{\text{ref}}\|_{L^2([0,T];\mathbb{R}^d)},
	\end{align*}
	\begin{align*}
		\mathcal{E}^{(\mathtt{M},L)}_2(h)=\frac{1}{\mathtt{M}}\sum_{\mathtt{m}=1}^{\mathtt{M}}\|\vu_h^{(\mathtt{m},L)}-\vu^{(\mathtt{m},L)}_{\text{ref}}\|_{L^2([0,T];\mathbb{R}^m)}.
	\end{align*}
	Here $\mathcal{J}_{h}^{(\ell,\mathtt{M})}$ is also defined as follows:
	\[\mathcal{J}_{h}^{(\ell, \mathtt{M})}=\frac{1}{2\mathtt{M}}\sum_{\mathtt{m}=1}^{\mathtt{M}}\left[\int_{0}^{T} \left( \left<\vx_{h}^{(\ell,\mathtt{m})}(s), \vB\vx_{h}^{(\ell,\mathtt{m})}(s)\right>+ \alpha\left< \vu_{h}^{(\ell,\mathtt{m})}(s),\vu_{h}^{(\ell,\mathtt{m})}(s)\right> \right) \,{\rm d}s + \left<\vx_{h}^{(\ell,\mathtt{m})}(T),\vD \vx_{h}^{(\ell,\mathtt{m})}(T)\right>\right], \]
	where $\{(\vx_h^{(\ell,\mathtt{m})},\vu_h^{(\ell,\mathtt{m})})\}_{\mathtt{m}=1}^{\mathtt{M}}$ is the collection of Monte Carlo copies of $(\vx_h^{(\ell)},\vu_{h}^{(\ell)}).$
	\begin{figure}[H]
		\centering
		
		\begin{subfigure}{0.40\textwidth}
			\centering
			\includegraphics[width=\linewidth]{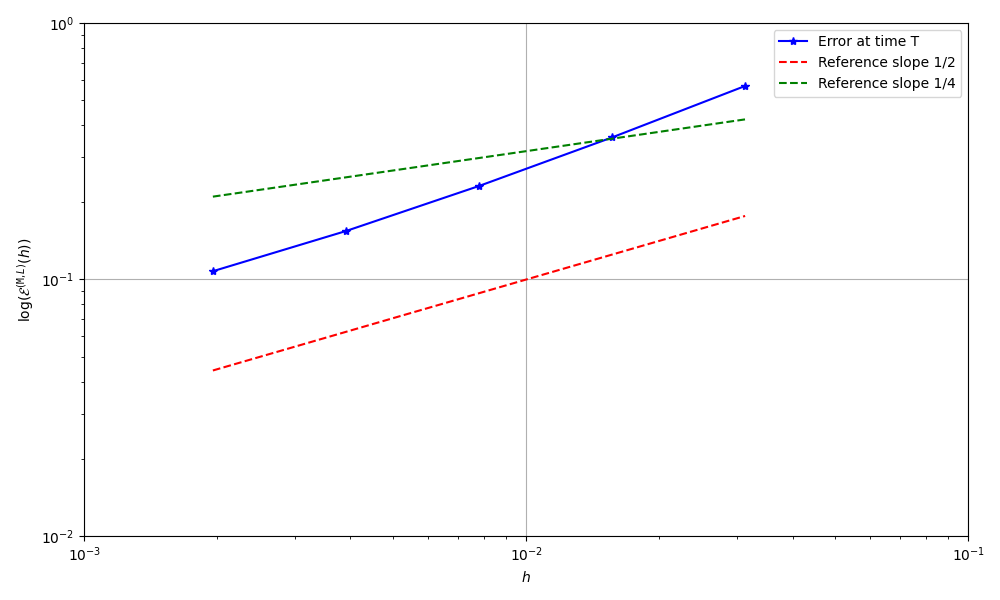}
			\caption{$h\to\log\mathcal{E}^{(\mathtt{M},L)}(h)$}
			\label{fig:image6}
		\end{subfigure}
		\hfill
		\begin{subfigure}{0.40\textwidth}
			\centering
			\includegraphics[width=\linewidth]{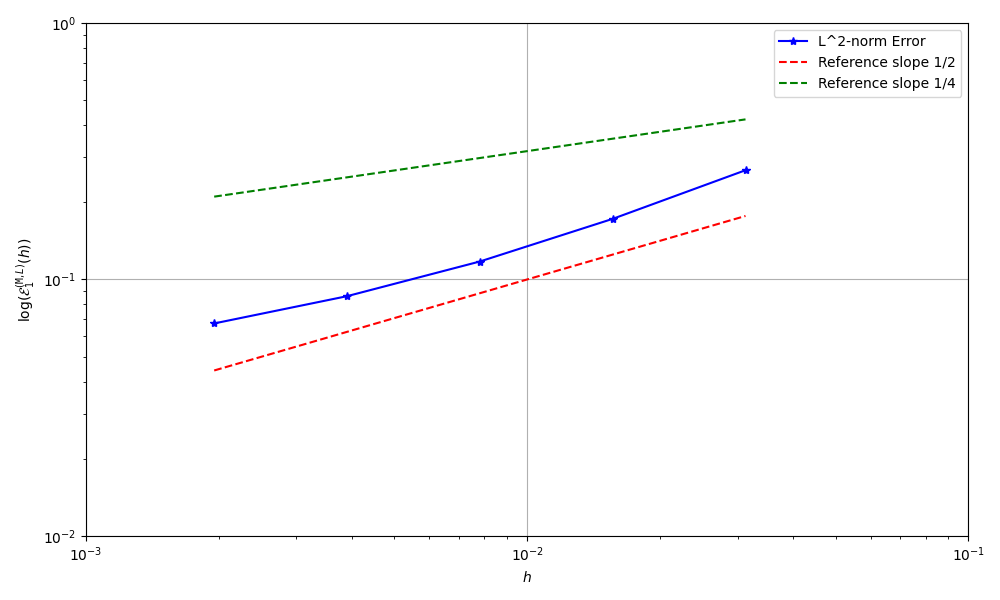}
			\caption{$h\to\log\mathcal{E}_1^{(\mathtt{M},L)}(h)$}
			\label{fig:image8}
		\end{subfigure}
		\hfill
		\begin{subfigure}{0.40\textwidth}
			\centering
			\includegraphics[width=\linewidth]{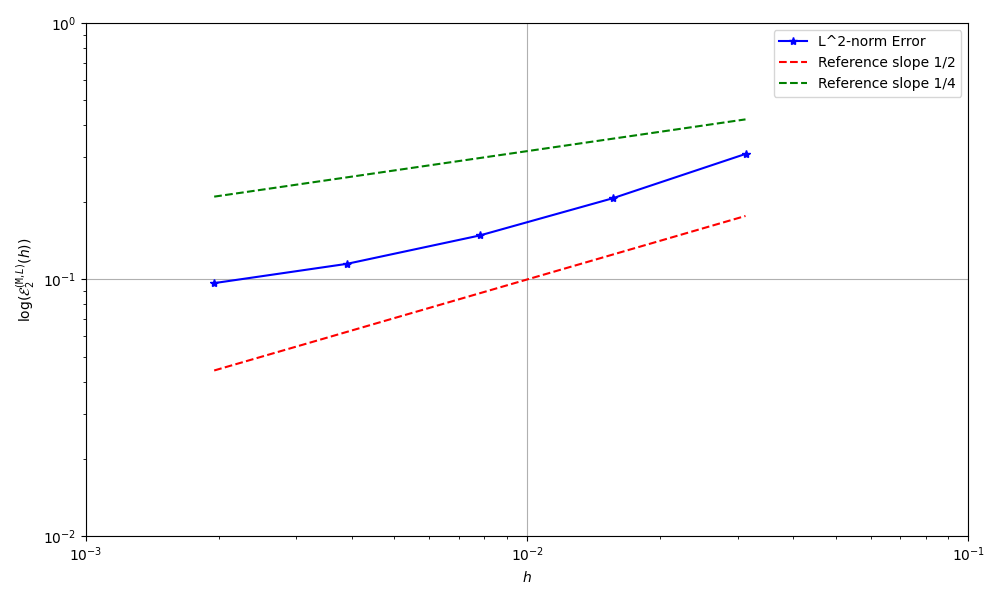}
			\caption{$h\to\log\mathcal{E}_2^{(\mathtt{M},L)}(h)$}
			\label{fig:image10}
		\end{subfigure}
		\caption{\textbf{(a)}  displays the error plot of the state iterate $\vx_h^{(L)}$ at time T, and \textbf{(b)} displays the error plot of state iterate \(\vx_h^{(L)}\) in $L^2$-norm, and \textbf{(c)} displays the error plot of control iterate $\vu_h^{(L)}$ in $L^2$-norm in the case of {\em control constraint}.}
		\label{089}
	\end{figure}
	
	\begin{figure}[H]
		\centering
		
		\begin{subfigure}{0.45\textwidth}
			\centering
			\includegraphics[width=\linewidth]{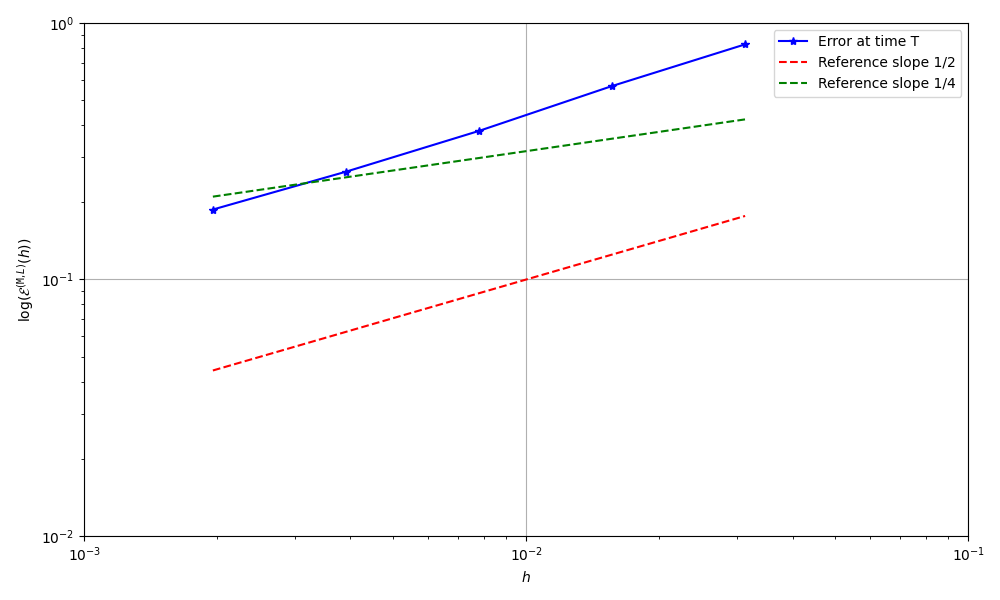}
			\caption{$h\to\log\mathcal{E}^{(\mathtt{M},L)}(h)$}
			\label{fig:image7}
		\end{subfigure}
		\hfill
		\begin{subfigure}{0.45\textwidth}
			\centering
			\includegraphics[width=\linewidth]{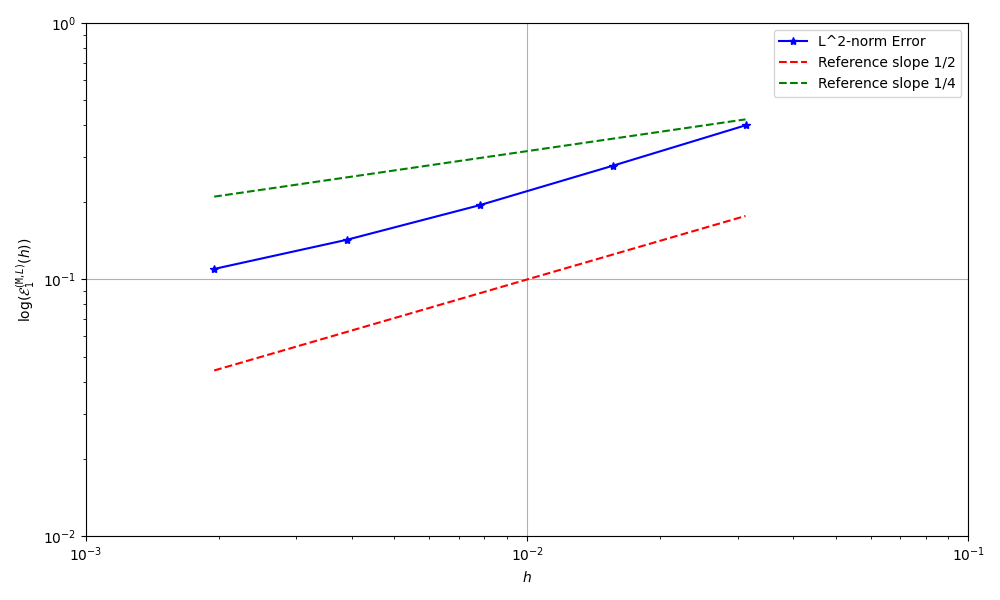}
			\caption{$h\to\log\mathcal{E}_1^{(\mathtt{M},L)}(h)$}
			\label{fig:image9}
		\end{subfigure}
		\hfill
		\begin{subfigure}{0.45\textwidth}
			\centering
			\includegraphics[width=\linewidth]{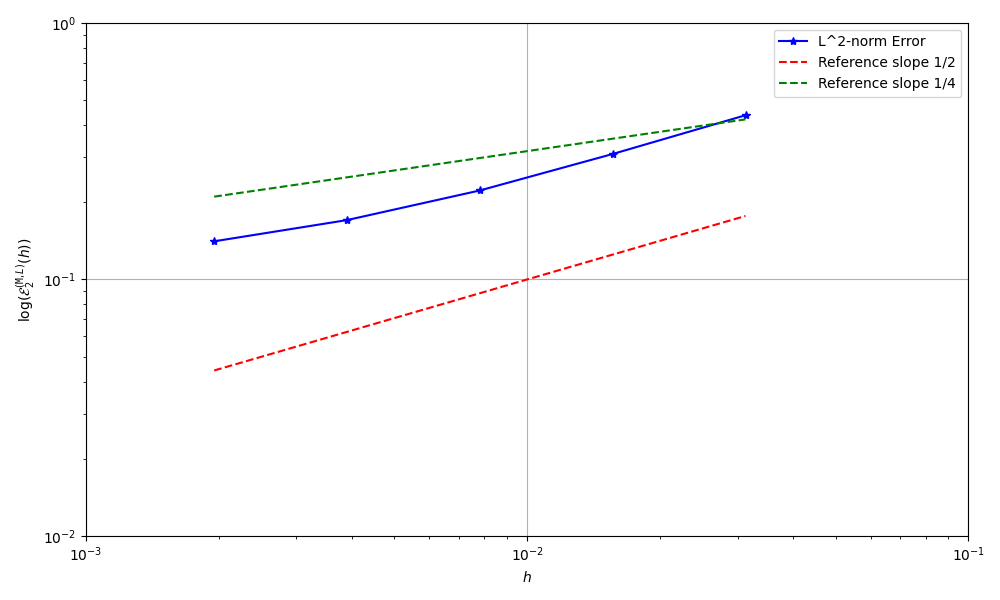}
			\caption{$h\to\log\mathcal{E}_2^{(\mathtt{M},L)}(h)$}
			\label{fig:image11}
		\end{subfigure}
		\caption{\textbf{(a)}  displays the error plot of the state iterate $\vx_h^{(L)}$ at time T, and \textbf{(b)} displays the error plot of state iterate \(\vx_h^{(L)}\) in $L^2$-norm, and \textbf{(c)} displays the error plot of control iterate $\vu_h^{(L)}$ in $L^2$-norm in the case of {\em free control}.}
		\label{090}
	\end{figure}
	\begin{figure}[H]
		\centering
		\begin{subfigure}{0.45\textwidth}
			\centering
			\includegraphics[width=\linewidth]{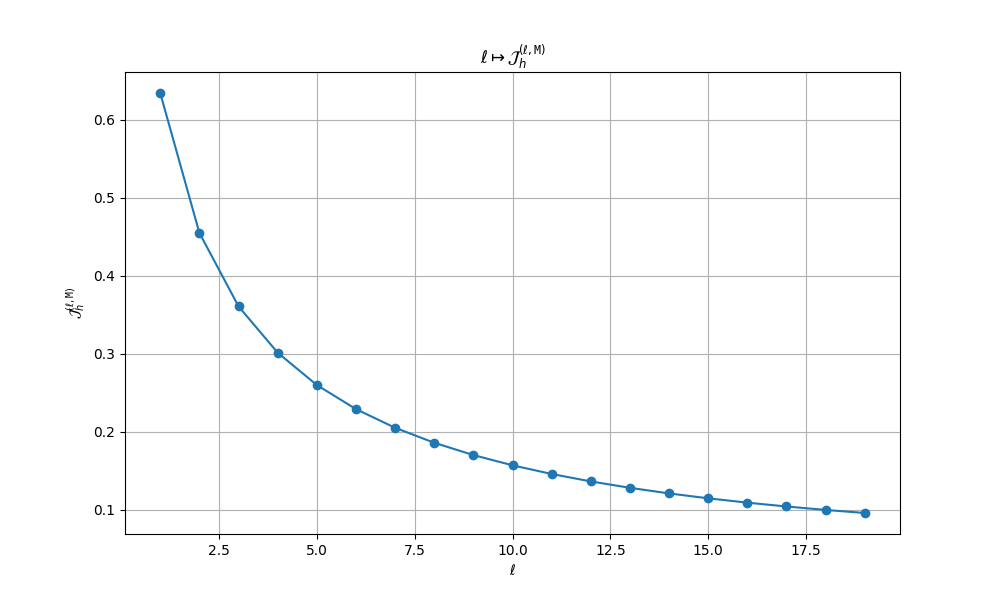}
			\caption{$\ell\to \mathcal{J}_{h}^{(\ell,\mathtt{M})}$}
			\label{fig:image5}
		\end{subfigure}
		
		\caption{\textbf{(a)} illustrates the decay of the cost functional $\mathcal{J}_h^{(\ell,\mathtt{M})}$ with respect to the gradient iterate \(\ell\) in the case of {\em control constraint}.
		}
		\label{fig:all_images}
	\end{figure}
	\begin{Remark}
		Figure \ref{089} illustrates that the rate of convergence is stronger than the order of \(\frac{1}{4}\). The cause behind may be that the stronger convergence is attributed to the regularity of the optimal pair \((\vx^*, \vu^*)\), which is of order \(\frac{1}{2}\), as establish in Corollary \ref{Corollary 2.1}.
		
	\end{Remark}
	\subsection*{Acknowledgments}
	The author wishes to thank Andreas Prohl for many stimulating discussions and valuable suggestions. 

\end{document}